\documentclass{article}%
\usepackage{amsmath}
\usepackage{amsfonts}
\usepackage{amssymb}
\usepackage{graphicx}%
\usepackage{mathrsfs}
\setcounter{MaxMatrixCols}{30}

\providecommand{\U}[1]{\protect \rule{.1in}{.1in}}
\newtheorem{theorem}{Theorem}[section]

\newtheorem{corollary}[theorem]{Corollary}

\newtheorem{definition}[theorem]{Definition}
\newtheorem{example}[theorem]{Example}

\newtheorem{lemma}[theorem]{Lemma}

\newtheorem{remark}[theorem]{Remark}

\newenvironment{proof}[1][Proof]{\noindent \textbf{#1.} }{\  \rule{0.5em}{0.5em}}
\begin{document}

\title{Stochastic  optimal control problem with infinite horizon  driven by $G$-Brownian motion}
\author{Mingshang Hu \thanks{Zhongtai Securities Institute for Financial Studies, Shandong University, Jinan 250100, China. humingshang@sdu.edu.cn. Research supported by the National Natural Science Foundation of China (No. 11671231) and the Young Scholars Program of Shandong University (No. 2016WLJH10).}
\and Falei Wang\thanks{Corresponding author. Zhongtai Securities Institute for Financial  Studies and Institute for Advanced Research, Shandong University, Jinan 250100, China.
flwang2011@gmail.com. Research supported by   the National Natural Science Foundation of China (No. 11601282), the Natural Science Foundation of Shandong Province (No. ZR2016AQ10), the Fundamental Research Funds of Shandong University (No. 2015GN023) and  the China
Scholarship Council (No.201606225002).
Hu and Wang's research was
partially supported by the Tian Yuan Projection of the National Natural Sciences Foundation of China (No. 11526205) and the 111 Project (No. B12023).}}
\date{}
\maketitle
\begin{abstract}
The present paper considers a stochastic  optimal control
problem,  in which the cost function is defined through  a backward
stochastic differential equation with infinite horizon driven by $G$-Brownian motion. Then we
study the regularities of the value function and  establish the dynamic programming principle.
Moreover, we prove that the value function is the unique viscosity solution of the related  Hamilton-Jacobi-Bellman-Isaacs (HJBI) equation.
\end{abstract}

\textbf{Key words}: $G$-Brownian motion,
 backward stochastic differential
equations, stochastic  optimal control,  dynamic
programming principle

\textbf{MSC-classification}: 93E20, 60H10, 35J60
\section{Introduction}
It is well-known that the backward stochastic differential
equations (BSDEs)  theory provides a powerful tool for the study of stochastic recursive
optimal control problem, which generalizes the classical
stochastic optimal control problem.
Indeed,  Peng \cite{peng-dpp} established a generalized dynamic programming principle (DPP) and provided a probabilistic
 interpretation for a wide class of  Hamilton-Jacobi-Bellman (HJB) equations.
 Afterwards, Peng  \cite{peng-1993} and \cite{peng-dpp-1} introduced the  ``backward semigroup'' approach and  extended  the previous results to more general case.
For further research on this topic,  the reader is referred to \cite{BJ, BN, MY-19992, J.Yong}   and the references
therein.

Recently, Peng  introduced a time-consistent fully nonlinear expectation theory.
 As a typical and important case, Peng established the $G$-expectation theory
 (see \cite {P10}). Under the $G$-expectation framework,  the stochastic integral
with respect to $G$-Brownian motion
  was also stated.
  Then Peng \cite{P10} and Gao \cite{G} obtained
 the existence and uniqueness theorem
for stochastic differential equations  driven by $G$-Brownian motion ($G$-SDEs). Moreover,
 Hu et.al. \cite{HJPS,HJPS1} introduced the backward stochastic differential
equations driven by $G$-Brownian motion ($G$-BSDEs).
The $G$-expectation theory provides a useful tool for studying financial problems
under volatility uncertainty. Indeed, with the help of $G$-stochastic analysis theory, Epstein and Ji \cite{EJ-2,EJ-1} studied a
recursive utility problem under both mean and volatility uncertainty, which generalizes the ones of \cite{CE}.
In a different setting, Soner, Touzi and Zhang \cite{STZ1} established the so-called 2BSDEs theory, which shares many similarities with $G$-BSDEs.

Recently, Hu and Ji \cite{HJ1} (see also \cite{HJY}) considered a stochastic recursive optimal control problem under volatility uncertainty.
Since there is no dominated probability measure in the $G$-framework, it is much more complicated than the classical case. In particular,
the essential infimum of a family of
random variables may not exist  and
it is difficult to construct a  discrete approximation of an admissible control  to get the dynamic
 programming
 principle in the nonlinear case. With the help of  quasi-surely stochastic analysis theory (see \cite{DHP11} and \cite{DenisMartini2006}), they introduced an ``implied partition'' approach to
 establish the DPP and got that  the value function
is the viscosity solution to the following HJBI equation:
\begin{equation*}
\left\{
\begin{array}
[c]{l}%
\partial_{t}V+ \underset{u\in U}{\inf}[G(H(x,V,\partial_{x}%
V,\partial_{xx}^{2}V,u))+\langle \partial_{x}%
V,b(x,u)\rangle+f(x,V,\partial_{x}%
V\sigma(x,u),u)]=0,\\
V(T,x)=\phi(x),
\end{array}
\right.
\end{equation*}
 which generalizes the ones of Peng \cite{peng-1993}.

Motivated by \cite{HJ1, peng-dpp},  we shall study the following  HJBI equation:
\begin{equation}\label{myqfl}
\underset{u\in U}{\inf}[G(H(x,V,\partial_{x}%
V,\partial_{xx}^{2}V,u))+\langle \partial_{x}%
V,b(x,u)\rangle+f(x,V,\partial_{x}%
V\sigma(x,u),u)]=0,
\end{equation}
which is a fully nonlinear  elliptic partial differential equation (PDE) in $\mathbb{R}^n$.  We refer the reader to \cite{BCI-2008,BN,L-1983,LM-1982,LM-19822} for a closest related approach, where the related PDEs are
HJB equations with Dirichlet boundary.

This paper is devoted to providing a stochastic representation for the
viscosity solution to the  HJBI equation \eqref{myqfl}. A key ingredient of our approach is  based on the $G$-BSDEs theory with infinite horizon,
which is   introduced by \cite{HW} through combing
nonlinear stochastic analysis method with the linearization approach
formulated by \cite{BH} (see also \cite{FH1, RM}).   Indeed, consider the following
   $G$-FBSDE with infinite horizon:
 \begin{align}\label{yw171}
 \begin{cases}
&X_{s}^{0,x,u}=x+\int^s_0b(X_{r}^{0,x,u},u_r)dr+\int^s_0h_{ij}(X_{r}^{0,x,u},u_r)d\langle
B^i,B^j\rangle_{r}+\int^s_0\sigma(X_{r}^{0,x,u},u_r)dB_{r},\\
&
Y_{s}^{0,x,u}   =Y_{T}^{0,x,u}+\int_{s}^{T}f(X_{r}^{0,x,u}%
,Y_{r}^{0,x,u},Z_{r}^{0,x,u},u_r)dr-\int_{s}^{T}Z_{r}^{0,x,u}dB_{r}\\
& \ \ \ \ \ \ \ +\int_{s}^{T}g_{ij}(X_{r}^{0,x,u}%
,Y_{r}^{0,x,u},Z_{r}^{0,x,u},u_r)d\langle B^i,B^j\rangle_{r} -(K_{T}^{0,x,u}-K_{s}^{0,x,u}).
\end{cases}
\end{align}
 The value function of our stochastic optimal control problem
 is given by
 \[
 V(x):=\underset{u\in\mathcal{U}[0,\infty)}{\inf}Y_{0}^{0,x,u}.
 \]
Since $G$ is a sublinear function, our stochastic control problem is essentially a `` $\inf\sup$ problem'', which can be seen as a robust optimal
control problem. For recent important
developments of this field, we refer
the readers to \cite{DK,MP,NN, TTU}. In \cite{DK},  a duality theory for robust utility maximization is stated in a non-dominated model.
In \cite{MP}, the authors applied 2BSDE with
quadratic growth to study robust utility maximization problem and
\cite{TTU} studied robust exponential and power utilities in a different setting. In \cite{NN}, the authors dealt with  a robust portfolio optimization problem
in a continuous-time financial market with jumps.

A potential application of this paper is to study the problems of minimizing an infinite horizon, discounted expected cost under  volatility uncertainty:
\[
J(x,u)=\mathbb{\hat{E}}[\int^{\infty}_0 \exp(-\lambda s)\psi(X^{0,x,u}_s,u_s)ds],
\]
where $\lambda>0$ is a discount factor and $\psi(x,u)$ is a cost function.
Indeed, taking $f(x,y,u)=-\lambda y+\psi(x,u)$  and $g_{ij}=0$ in the equation \eqref{yw171}, we have
\[
Y_{s}^{0,x,u}   =Y_{T}^{0,x,u}+\int_{s}^{T}(-\lambda Y_{r}^{0,x,u}+\psi(X_{r}^{0,x,u},u_r) )dr-\int_{s}^{T}Z_{r}^{0,x,u}dB_{r}
 -(K_{T}^{0,x,u}-K_{s}^{0,x,u}).
\]
By change of variable formula, we have
\[Y_{0}^{0,x,u}=\mathbb{\hat{E}}[\exp(-\lambda T)Y_{T}^{0,x,u}
+\int_{0}^{T}\exp(-\lambda r)\psi(X_{r}^{0,x,u},u_r) dr].
\]
Note that the expectation of $|Y^{0,x,u}_T|$ is uniformly bounded (see section 3). Then  sending $T\uparrow\infty$ yields that
\[
J(x,u)=Y_{0}^{0,x,u}.
\]
Thus the above stochastic  optimal control theory with infinite horizon   provides an alternative way for studying this problem.
In the linear case, more research on this topic  can be found in  \cite{ Fleming W.H,HY} and the references
therein.

The objective of our paper is  to prove that the  value function $V$  is the viscosity solution of
the HJBI equation \eqref{myqfl}. First, we
investigate the  properties of the value function $V$ by the $G$-stochastic analysis approach, which is different from the ones in \cite{HJ1} since the
cost function equation is a $G$-BSDE with infinite horizon.
Then we obtain the  following relation
\[
 V(x)=\underset{u\in\mathcal{U}[0,\infty)}{\inf}Y_{0}^{0,x,u}=\underset{u\in\mathcal{U}[t,\infty)}{ess \inf}Y_{t}^{t,x,u},
\]
which  is crucial to  give a stochastic representation for  the  HJBI equation \eqref{myqfl}.
Next we establish the DPP by the ``backward semigroup'' method and a new version of ``implied partition'' approach.
This provides a fundamental tool for the study of the stochastic control problems in the $G$-framework.  Finally, we show that the value function is the  viscosity solution of the
HJBI equation \eqref{myqfl} and a stochastic verification theorem is also stated. Moreover, based on stochastic control approach  and the method introduced in \cite{HW}, we also get the uniqueness of viscosity solution to equation \eqref{myqfl}.

The uniqueness of viscosity solutions of elliptic PDEs in $\mathbb{R}^n$ has been studied for various types of HJB equations of second
order (see, e.g. \cite{CMI}, \cite{N1} and \cite{PE}). In \cite{CMI}, a result is stated under some uniformly continuous assumptions for $H$.
In \cite{N1} and \cite{PE}, the authors both dealt with semi-linear elliptic PDEs under  locally uniformly continuous conditions for $H$. However, they both assumed some additional conditions, such as condition (6.13) in \cite{PE} and bounded condition (4.2) on diffusion term in \cite{N1}.
In this paper, we treat the fully nonlinear case  under some locally uniformly continuous conditions for $H$ and remove these additional conditions (see also \cite{HW} for the case
there is no control).
  However, we only consider  viscosity solutions of quadratic  growth.
 On the other hand, Ren \cite{RZ}
studied the viscosity solutions  of fully nonlinear elliptic path-dependent PDEs under some uniformly continuous conditions for $H$ (see \cite{RZ1} for more research on this topic), which provides  an important framework  for the study of non-Markovian stochastic  control problem with infinite horizon.

The paper is organized as follows. In  section 2, we present some preliminaries
for $G$-Brownian motion  and $G$-BSDEs theory.
We state our  stochastic optimal
control problem in section 3. The section 4 is devoted to studying  the regularities of the value function. In
section 5, we  prove that the
value function is the unique  viscosity solution of the related HJBI equation.

\section{Preliminaries}

The main purpose of this section is to recall some basic notions and
results of $G$-expectation and $G$-BSDEs, which are needed in the sequel. The
readers may refer to \cite{P07a}, \cite{P08a} and \cite{P10} for more
details.
\subsection{$G$-Brownian motion}
Let $\Omega=C_{0}^{d}(\mathbb{R}^{+})$ be the space of all $\mathbb{R}^{d}%
$-valued continuous paths $(\omega_{t})_{t\geq0}$ starting from origin,
equipped with the distance%
\[
\rho(\omega^{1},\omega^{2}):=\sum_{i=1}^{\infty}2^{-i}[(\max_{t\in
\lbrack 0,i]}|\omega_{t}^{1}-\omega_{t}^{2}|)\wedge1].
\]
For each $t\in \lbrack0,\infty)$, we denote
\begin{itemize}
\item $B_{t}(\omega):=\omega_{t}$ for each $\omega \in \Omega$;
\item $\mathcal{B}(\Omega)$: the Borel $\sigma$-algebra of $\Omega$,\ $\Omega_{t}:=\{ \omega_{\cdot \wedge t}:\omega \in \Omega \}$,\
$\mathcal{F}_{t}:=\mathcal{B}(\Omega_{t})$;
\item $L^{0}(\Omega)$: the space of all $\mathcal{B}(\Omega)$-measurable real
functions;

\item $L^{0}(\Omega_{t})$: the space of all $\mathcal{B}(\Omega_{t}%
)$-measurable real functions;

\item $C_{b}(\Omega)$: all bounded continuous elements in $L^{0}(\Omega)$;
$C_{b}(\Omega_{t}):=C_{b}(\Omega)\cap L^{0}(\Omega_{t})$;
\item $L_{ip}(\Omega):=\{ \varphi(B_{t_{1}},\ldots,B_{t_{k}}):k\in \mathbb{N}%
,t_{1},\ldots,t_{k}\in \lbrack0,\infty),\varphi \in C_{b.Lip}(\mathbb{R}%
^{k\times d })\}$, where $C_{b.Lip}(\mathbb{R}^{k\times d})$ denotes the space of all
bounded and
Lipschitz functions on $\mathbb{R}^{k\times d}$; $L_{ip}(\Omega_{t}):=L_{ip}%
(\Omega)\cap L^{0}(\Omega_{t})$.
\end{itemize}

Given a monotonic and sublinear function $G:\mathbb{S}%
(d)\rightarrow \mathbb{R}$,
let the canonical process
$B_{t}=(B_{t}^{i})_{i=1}^{d}$ be the $d$-dimensional $G$-Brownian
motion on the $G$-expectation space $(\Omega,L_{ip}(\Omega),\mathbb{\hat{E}%
}[\cdot],(\mathbb{\hat{E}}_{t}[\cdot])_{t\geq0})$, where $\mathbb{S}(d)$ denotes the space of all $d\times d$ symmetric matrices. For each
$p\geq1$, the completion of $L_{ip}(\Omega)$ under the norm
$||X||_{L_{G}^{p}}:=(\mathbb{\hat{E}}[|X|^{p}])^{1/p}$ is denoted by
$L_{G}^{p}(\Omega)$. Similarly, we can define
$L_{G}^{p}(\Omega_{T})$ for each fixed $T\geq0$. In this paper, we always assume
 that $G$ is non-degenerate, i.e., there exist some constants
$0<\underline{\sigma}^{2}\leq\bar{\sigma}^{2}<\infty$ such that
\[
\frac{1}{2}\underline{\sigma}^{2}\mathrm{tr}[A-B]\leq G(A)-G(B)\leq \frac{1}{2}\bar{\sigma}^{2}\mathrm{tr}[A-B]\text{ for }A\geq B.
\]
 Then there exists a bounded and closed subset $\Gamma
\subset\mathbb{S}^+(d)$ such that%
\begin{equation*}
G(A)=\frac{1}{2}\underset{Q\in\Gamma}{\sup}\mathrm{tr}[AQ],
\label{G-representation}%
\end{equation*}
where $\mathbb{S}^+(d)$ denotes the space of all $d\times d$ symmetric positive definite matrices.

\begin{theorem}[\cite{DHP11,HP09}]
\label{the2.7}  There exists a weakly compact set
$\mathcal{P}$ of probability
measures on $(\Omega,\mathcal{B}(\Omega))$, such that
\[
\mathbb{\hat{E}}[\xi]=\sup_{P\in\mathcal{P}}E_{P}[\xi]\  \text{for
 all}\ \xi\in  {L}_{G}^{1}{(\Omega)}.
\]
$\mathcal{P}$ is called a set that represents $\mathbb{\hat{E}}$.
\end{theorem}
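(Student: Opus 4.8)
The plan is to realize $\mathbb{\hat{E}}$ as an upper expectation over a family of linear expectations via the abstract representation of sublinear expectations, and then to promote those linear functionals to countably additive probability measures and extract a weakly compact representing family. First I would note that $\mathbb{\hat{E}}$ is a sublinear expectation on $L_{ip}(\Omega)$ (monotone, constant-preserving, sub-additive, positively homogeneous) and that $L_{ip}(\Omega)$ is a Stone vector lattice of bounded continuous functions containing the constants, since the pointwise minimum of two cylinder functions is again a cylinder function with a $C_{b.Lip}$ test function. By Peng's abstract representation theorem, which is a Hahn--Banach separation argument, there exists a family $\Theta$ of linear functionals on $L_{ip}(\Omega)$, each dominated by $\mathbb{\hat{E}}$, with $\mathbb{\hat{E}}[\xi]=\sup_{\theta\in\Theta}\theta(\xi)$. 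Domination together with $\theta(1)=1$ forces each $\theta$ to be positive and normalized; the remaining task is to show $\theta$ is represented by a genuine countably additive measure.

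The decisive step is a regularity (continuity from above) property of $\mathbb{\hat{E}}$: whenever $(X_n)\subset L_{ip}(\Omega)$ decreases pointwise to $0$, one has $\mathbb{\hat{E}}[X_n]\downarrow 0$. I would obtain this from the finite-dimensional structure of the $G$-expectation: the value $\mathbb{\hat{E}}[\varphi(B_{t_1},\dots,B_{t_k})]$ is computed by iterating the $G$-heat flow, whose generator is governed by the bounded set $\Gamma$. This produces uniform moment estimates, in particular $\mathbb{\hat{E}}[|B_t-B_s|^{2m}]\leq C_m|t-s|^{m}$, hence a Kolmogorov-type tightness bound for the finite-dimensional marginals on the path space $\Omega=C_0^d(\mathbb{R}^+)$. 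Given continuity from above of $\mathbb{\hat{E}}$, each dominated $\theta$ inherits $\theta(X_n)\downarrow 0$ through $0\leq\theta(X_n)\leq\mathbb{\hat{E}}[X_n]$, so the Daniell--Stone theorem represents $\theta$ by a countably additive probability measure $P$ on $(\Omega,\mathcal{B}(\Omega))$. Collecting these yields a set $\mathcal{P}_0$ of probability measures with $\mathbb{\hat{E}}[\xi]=\sup_{P\in\mathcal{P}_0}E_P[\xi]$ for all $\xi\in L_{ip}(\Omega)$.

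It then remains to arrange weak compactness and to extend the identity to $L_G^1(\Omega)$. The same uniform moment bounds show that $\mathcal{P}_0$ is tight, so by Prokhorov's theorem it is relatively compact in the weak topology induced by $C_b(\Omega)$; I would set $\mathcal{P}$ to be its weak closure, which is then weakly compact. For $\xi\in L_{ip}(\Omega)\subset C_b(\Omega)$ the bound $E_P[\xi]\leq\mathbb{\hat{E}}[\xi]$ is preserved under weak limits, so passing to the closure does not enlarge the supremum and the representation persists on $L_{ip}(\Omega)$. Finally, both functionals $\xi\mapsto\mathbb{\hat{E}}[\xi]$ and $\xi\mapsto\sup_{P\in\mathcal{P}}E_P[\xi]$ are $1$-Lipschitz with respect to the norm $\|\cdot\|_{L_G^1}=\mathbb{\hat{E}}[|\cdot|]$; since they coincide on the dense subspace $L_{ip}(\Omega)$, they agree on the completion $L_G^1(\Omega)$, which is the claimed identity.

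I expect the regularity step to be the principal obstacle. Establishing that $\mathbb{\hat{E}}$ is continuous from above on $L_{ip}(\Omega)$ is exactly what converts the finitely additive dominated functionals into countably additive probability measures and simultaneously delivers the tightness needed for weak compactness; it is genuinely analytic and relies on the quantitative moment control furnished by the $G$-heat equation and the compactness of $\Gamma$, rather than on soft functional analysis alone.
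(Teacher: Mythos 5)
The paper itself offers no proof of this statement: it is imported verbatim from \cite{DHP11} and \cite{HP09}, so your attempt has to be measured against the arguments in those references. Your outline reproduces the functional-analytic strategy of \cite{HP09}: represent $\mathbb{\hat{E}}$ as a supremum of dominated linear functionals via Hahn--Banach, upgrade these to countably additive measures, obtain weak compactness by tightness and Prokhorov, and extend from $L_{ip}(\Omega)$ to $L_{G}^{1}(\Omega)$ by the $1$-Lipschitz property of both sides. The soft steps in your plan are all correct: $L_{ip}(\Omega)$ is a Stone vector lattice, domination forces positivity and normalization of each $\theta$, domination on $L_{ip}(\Omega)\subset C_{b}(\Omega)$ survives weak limits so passing to the weak closure does not enlarge the supremum, and the density extension is routine.

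The genuine gap is at the step you yourself flag as the crux, and your proposed route to it would not work as stated. You claim that continuity from above of $\mathbb{\hat{E}}$ on $L_{ip}(\Omega)$ follows from the moment estimates $\mathbb{\hat{E}}[|B_{t}-B_{s}|^{2m}]\leq C_{m}|t-s|^{m}$, ``hence a Kolmogorov-type tightness bound for the finite-dimensional marginals.'' But tightness is a property of countably additive measures, and at this stage of your argument no measures on path space exist; producing them is precisely what the regularity step is supposed to accomplish, so the reasoning is circular. Nor does finite-dimensional regularity transfer: if $X_{n}=\varphi_{n}(B_{t_{1}},\dots,B_{t_{k_{n}}})\downarrow 0$ pointwise on $\Omega$, the test functions $\varphi_{n}$ need not decrease to zero pointwise on $\mathbb{R}^{k_{n}\times d}$ --- the monotone decrease is guaranteed only along restrictions of continuous paths, and $k_{n}\to\infty$, so no Dini-type argument in a fixed finite dimension applies. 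The proof in \cite{HP09} avoids using path-space regularity of $\mathbb{\hat{E}}$ as an input: for each dominated functional $\theta$ one first shows that its finite-dimensional marginals are countably additive (in finite dimensions continuity from above does follow from moment bounds, via the pointwise estimate $\varphi_{n}(y)\leq\sup_{|y'|\leq R}\varphi_{n}(y')+\|\varphi_{1}\|_{\infty}(|y|^{2}\wedge R^{2})R^{-2}$ and Dini's theorem), checks consistency, and then invokes Kolmogorov's extension theorem together with the Kolmogorov continuity criterion --- using $E_{\theta}[|B_{t}-B_{s}|^{4}]\leq\mathbb{\hat{E}}[|B_{t}-B_{s}|^{4}]\leq C|t-s|^{2}$, inherited from domination --- to produce a countably additive law $P_{\theta}$ on $C_{0}^{d}(\mathbb{R}^{+})$ representing $\theta$ on all of $L_{ip}(\Omega)$. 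Once this is done your Daniell--Stone step is superfluous, tightness of the family $\{P_{\theta}\}$ follows from the same uniform fourth-moment bound, and continuity from above of $\mathbb{\hat{E}}$ comes out as a corollary of the representation rather than an ingredient of it. (The alternative proof in \cite{DHP11} is fully constructive: $\mathcal{P}$ is the weak closure of the laws of $\int_{0}^{\cdot}\gamma_{s}\,dW_{s}$ over adapted $\Gamma^{1/2}$-valued processes $\gamma$ under a classical Wiener measure, identified with $\mathbb{\hat{E}}$ through the stochastic-control representation of the $G$-heat equation; no abstract functional analysis is needed there.)
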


Let $\mathcal{P}$ be a weakly compact set that represents
$\mathbb{\hat{E}}$.
For this $\mathcal{P}$, we define capacity%
\[
c(A):=\sup_{P\in\mathcal{P}}P(A),\ A\in\mathcal{B}(\Omega).
\]
A set $A\subset\mathcal{B}(\Omega)$ is polar if $c(A)=0$.  A
property holds $``quasi$-$surely''$ (q.s.) if it holds outside a
polar set. In the following, we do not distinguish between two random
variables $X$ and $Y$ if $X=Y$ q.s.

\begin{definition}[\cite{P08a}]
\label{def2.6} Let $M_{G}^{0}(0,T)$ be the collection of processes
of  the following form: for a given partition
$\{t_{0},\cdot\cdot\cdot,t_{N}\}$ of $[0,T]$,
\[
\eta_{t}(\omega)=\sum_{i=0}^{N-1}\xi_{i}(\omega)\mathbf{1}_{[t_{i},t_{i+1})}(t),
\]
where $\xi_{i}\in L_{ip}(\Omega_{t_{i}})$,
$i=0,1,2,\cdot\cdot\cdot,N-1$. For each $p\geq1$,  denote by
$M_{G}^{p}(0,T)$ the completion of $M_{G}^{0}(0,T)$ under the norm
$||\eta||_{M_{G}^{p}}:=(\mathbb{\hat{E}}[\int_{0}^{T}|\eta_{t}|^{p}dt])^{1/p}$.
\end{definition}

For each $1\leq i, j \leq d$, we  denote by $\langle B^i, B^j\rangle$   the mutual variation process.
Then for two processes $ \eta\in M_{G}^{2}(0,T)$ and $ \xi\in M_{G}^{1}(0,T)$,
the $G$-It\^{o} integrals  $\int^{\cdot}_0\eta_sdB^i_s$ and $\int^{\cdot}_0\xi_sd\langle
B^i,B^j\rangle_s$  are well defined, see  Li-Peng \cite{L-P}  and Peng \cite{P10}.
Moreover, we also have the corresponding $G$-It\^{o} formula.

Consider the following $G$-It\^{o} process (in this paper we always use Einstein convention)
\[
X_{t}^{\nu}=X_{0}^{\nu}+\int_{0}^{t}\alpha_{s}^{\nu}ds+\int_{0}^{t}\eta
_{s}^{\nu ij}d\left \langle B^{i},B^{j}\right \rangle
_{s}+\int_{0}^{t}\beta _{s}^{\nu j}dB_{s}^{j},
\]
where $\nu=1,\ldots,n$.
\begin{theorem}[\cite{L-P,P08a}]\label{Thm6.5}
Suppose that $\Phi$ is a $C^2$-function on $\mathbb{R}^n$ such that
$\partial_{x^{\mu}x^{\nu}}^{2}\Phi$ is a function of polynomial growth
for any $\mu,\nu=1,\cdots,n$. Let $\alpha^{\nu}$, $\beta^{\nu
j}$ and $\eta^{\nu ij}$, $\nu=1,\cdots,n$, $i,j=1,\cdots,d$ be
in $M_{G}^{2}(0,T)$. Then for each $t\geq0$ we have
\begin{align}
\Phi(X_{t})-\Phi(X_{s}) &
=\int_{s}^{t}\partial_{x^{\nu}}\Phi(X_{u})\beta _{u}^{\nu
j}dB_{u}^{j}+\int_{s}^{t}\partial_{x^{\nu}}\Phi(X_{u})\alpha_{u}^{\nu
}du\label{e629}\\
& \ \ \ +\int_{s}^{t}[\partial_{x^{\nu}}\Phi(X_{u})\eta_{u}^{\nu ij}+\frac{1}%
{2}\partial_{x^{\mu}x^{\nu}}^{2}\Phi(X_{u})\beta_{u}^{\mu
i}\beta_{u}^{\nu j}]d\left \langle B^{i},B^{j}\right \rangle
_{u} \ q.s.\nonumber
\end{align}

\end{theorem}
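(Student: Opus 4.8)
The plan is to establish \eqref{e629} in three reduction steps — first weakening the regularity demands on $\Phi$, then simplifying the integrands, and finally carrying out an explicit Taylor expansion on a partition. First I would reduce to the case $\Phi\in C_b^2(\mathbb{R}^n)$ with bounded first and second derivatives. Since the $G$-It\^o process $X$ satisfies Burkholder--Davis--Gundy-type moment estimates (Peng \cite{P10}), one has $\mathbb{\hat{E}}[\sup_{s\leq u\leq t}|X_u|^p]<\infty$ for every $p\geq1$; choosing a sequence $\Phi_N\in C_b^2$ that agrees with $\Phi$ on $\{|x|\leq N\}$ and controlling the discrepancy through the polynomial growth of $\partial_{x^\mu x^\nu}^2\Phi$ together with these moment bounds, both sides of \eqref{e629} for $\Phi_N$ converge to the corresponding expressions for $\Phi$. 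Next, since $M_G^2(0,T)$ is by Definition \ref{def2.6} the completion of the step processes $M_G^0(0,T)$, I would approximate $\alpha^\nu,\beta^{\nu j},\eta^{\nu ij}$ by step processes; the $G$-It\^o isometry and H\"older's inequality give continuity of every term in \eqref{e629} with respect to the $M_G^2$-norm, so it suffices to establish the identity for step integrands.

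It then remains to prove \eqref{e629} when $\Phi\in C_b^2$ and $\alpha^\nu,\beta^{\nu j},\eta^{\nu ij}$ are constant on the intervals of a partition $s=s_0<s_1<\cdots<s_M=t$. I would telescope $\Phi(X_t)-\Phi(X_s)=\sum_{k}[\Phi(X_{s_{k+1}})-\Phi(X_{s_k})]$ and Taylor-expand each increment to second order,
\[
\Phi(X_{s_{k+1}})-\Phi(X_{s_k})=\partial_{x^\nu}\Phi(X_{s_k})\,\Delta_k X^\nu
+\tfrac12\,\partial_{x^\mu x^\nu}^2\Phi(X_{s_k})\,\Delta_k X^\mu\,\Delta_k X^\nu+r_k,
\]
where $\Delta_k X^\nu=\alpha^\nu\Delta_k s+\eta^{\nu ij}\Delta_k\langle B^i,B^j\rangle+\beta^{\nu j}\Delta_k B^j$ and $r_k$ is the Taylor remainder. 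Substituting this decomposition and letting the mesh tend to zero, the first-order sum $\sum_k\partial_{x^\nu}\Phi(X_{s_k})\Delta_k X^\nu$ converges to the three integrals against $du$, $d\langle B^i,B^j\rangle_u$ and $dB_u^j$ appearing in \eqref{e629}, by the very construction of the $G$-It\^o integrals for step processes.

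The decisive point is the second-order sum. Among the products $\Delta_k X^\mu\,\Delta_k X^\nu$, every term containing a factor $\Delta_k s$ or $\Delta_k\langle B^i,B^j\rangle$ is of higher order and vanishes in the limit, so the only surviving contribution is $\sum_k\tfrac12\partial_{x^\mu x^\nu}^2\Phi(X_{s_k})\,\beta^{\mu i}\beta^{\nu j}\,\Delta_k B^i\,\Delta_k B^j$. The hard part is to show that this converges to $\int_s^t\tfrac12\partial_{x^\mu x^\nu}^2\Phi(X_u)\beta_u^{\mu i}\beta_u^{\nu j}\,d\langle B^i,B^j\rangle_u$; this is precisely where the $G$-framework departs from the classical one, since $\langle B^i,B^j\rangle$ is a genuinely uncertain process rather than the deterministic $\delta_{ij}u$. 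I would recall that the mutual variation $\langle B^i,B^j\rangle$ is by construction the $L_G^2$-limit of the sums $\sum_k\Delta_k B^i\,\Delta_k B^j$ (Li--Peng \cite{L-P}, Peng \cite{P10}); inserting the bounded weights and estimating $\mathbb{\hat{E}}[|\Delta_k B^i\Delta_k B^j-\Delta_k\langle B^i,B^j\rangle|^2]$ via the $G$-It\^o isometry upgrades this to convergence of the weighted sum to the desired integral against $d\langle B^i,B^j\rangle$. The Taylor remainder $\sum_k r_k$ and the error from freezing $\partial_{x^\mu x^\nu}^2\Phi$ at $X_{s_k}$ are then absorbed using the uniform continuity of $\partial_{x^\mu x^\nu}^2\Phi$ on compacts together with the moment bounds on $X$.

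A final subtlety specific to the $G$-setting is that $\mathbb{\hat{E}}$ is only sublinear, so the ``isometries'' above are inequalities and the convergence must be arranged to hold quasi-surely, that is, outside a single polar set common to the whole family $\mathcal{P}$ of Theorem \ref{the2.7}. Passing to a suitable subsequence of partitions along which the $L_G^2$-convergences become $q.s.$ convergences, and invoking the sub-additivity of the capacity $c$, then yields the identity \eqref{e629} $q.s.$, completing the argument.
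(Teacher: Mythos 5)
You should first be aware that the paper offers no proof of Theorem \ref{Thm6.5}: it is quoted as a known result from Li--Peng \cite{L-P} and Peng \cite{P08a,P10}, so the only meaningful comparison is with the proofs in those references. Your sketch reproduces the standard route used there for the core case, and that part is sound: reduction to step integrands via the density of $M_G^0(0,T)$ in $M_G^2(0,T)$ (Definition \ref{def2.6}), a telescoping second-order Taylor expansion, identification of the weighted sums $\sum_k\partial^2_{x^\mu x^\nu}\Phi(X_{s_k})\beta^{\mu i}\beta^{\nu j}\Delta_kB^i\Delta_kB^j$ with the integral against $d\langle B^i,B^j\rangle$ (the estimate you invoke works because $B^i_tB^j_t-\langle B^i,B^j\rangle_t$ is a symmetric $G$-martingale, so cross terms in the sublinear expectation of the squared error vanish), and the upgrade from $L_G^2$-convergence to a quasi-sure identity via capacity subadditivity and Theorem \ref{the2.7}. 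Where you genuinely depart from the cited proofs is the passage from bounded to polynomially growing $\partial^2_{x^\mu x^\nu}\Phi$: Li--Peng accomplish this by stopping-time localization with $\tau_N=\inf\{t:|X_t|\geq N\}$ --- building the stopping-time calculus that legitimizes this in the $G$-framework is the main contribution of \cite{L-P} --- whereas you truncate $\Phi$ itself to some $\Phi_N\in C^2_b$ and control the discrepancy by moment bounds on $X$. Your route is attractive precisely because it avoids stopping times, which are delicate under $G$-expectation.

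There is, however, one genuine gap as the statement is printed. Your truncation step rests on the claim $\mathbb{\hat{E}}[\sup_{s\leq u\leq t}|X_u|^p]<\infty$ for every $p\geq1$, and indeed the very well-posedness of integrals such as $\int_s^t\partial_{x^\nu}\Phi(X_u)\beta^{\nu j}_u\,dB^j_u$ with polynomially growing $\partial_x\Phi$ rests on it too. This is false if $\alpha^\nu,\beta^{\nu j},\eta^{\nu ij}$ are merely elements of $M_G^2(0,T)$: for instance $X_t=\int_0^t\beta_s\,dB_s$ with an unbounded $\beta\in M_G^2(0,T)$ lies only in $L_G^2$, not in every $L_G^p$. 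The versions actually proved in the cited references impose that the integrands be \emph{bounded} processes in $M_G^2(0,T)$ --- a hypothesis silently dropped in the paper's restatement of the theorem --- and under that hypothesis your moment claim, the uniform-integrability argument behind the $\Phi_N\to\Phi$ limit, and hence your whole argument go through. So your proposal is a correct (and in the localization step, genuinely alternative) proof of the theorem the references actually establish, but you should state the boundedness (or some higher integrability) of the integrands explicitly; for integrands that are literally only in $M_G^2$, the key estimate fails and even the integrands appearing in \eqref{e629} need not belong to the spaces in which the $G$-It\^{o} integrals are defined.
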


\subsection{$G$-BSDEs}
For a fixed real number $T>0$, consider the following type of $G$-BSDEs:%
\begin{align}\label{e3}
Y_{t}    =&\xi+\int_{t}^{T}f(s,Y_{s},Z_{s})ds+\int_{t}^{T}g_{ij}(s,Y_{s}%
,Z_{s})d\langle B^{i},B^{j}\rangle_{s} -\int_{t}^{T}Z_{s}dB_{s}\nonumber\\
& -(K_{T}-K_{t}),  \ \   q.s. %
\end{align}
where
\[
f(t,\omega,y,z),g_{ij}(t,\omega,y,z):[0,T]\times\Omega\times
\mathbb{R}\times\mathbb{R}^{d}\rightarrow\mathbb{R}%
\]
satisfy the following properties:
\begin{description}
\item[(H1)] There exists a constant $\beta>0$ such that for any $y,z$,
$f(\cdot,\cdot,y,z),g_{ij}(\cdot,\cdot,y,z)\in M_{G}^{2+\beta}(0,T)$;

\item[(H2)] There exists a constant $L_1>0$ such that
\[
|f(t,y,z)-f(t,y^{\prime},z^{\prime})|+\sum_{i,j=1}^{d}|g_{ij}(t,y,z)-g_{ij}%
(t,y^{\prime},z^{\prime})|\leq L_1(|y-y^{\prime}|+|z-z^{\prime}|).
\]
\end{description}

Let $S_{G}^{0}(0,T)=\{h(t,B_{t_{1}\wedge t},\cdot\cdot\cdot,B_{t_{n}\wedge
t}):t_{1},\ldots,t_{n}\in\lbrack0,T],h\in C_{b,Lip}(\mathbb{R}^{n+1})\}$. For
$p\geq1$ and $\eta\in S_{G}^{0}(0,T)$, set $\Vert\eta\Vert_{S_{G}^{p}}=\{
\mathbb{\hat{E}}[\sup_{t\in\lbrack0,T]}|\eta_{t}|^{p}]\}^{\frac{1}{p}}$.
Denote by $S_{G}^{p}(0,T)$ the completion of $S_{G}^{0}(0,T)$ under the norm
$\Vert\cdot\Vert_{S_{G}^{p}}$.
For simplicity, we denote by $\mathfrak{S}_{G}^{2}(0,T)$ the collection
of all stochastic processes $(Y,Z,K)$ such that $Y\in S_{G}^{2}(0,T)$, $Z\in
M_{G}^{2}(0,T;\mathbb{R}^{d})$, $K$ is a decreasing $G$-martingale with
$K_{0}=0$ and $K_{T}\in L_{G}^{2}(\Omega_{T})$. Then the above $G$-BSDE admits a unique  $\mathfrak{S}_{G}^{2}(0,T)$-solution.
\begin{theorem}[\cite{HJPS}]
\label{the4.1}  Assume that $\xi\in L_{G}^{2+\beta}(\Omega_{T})$
and $f$, $g_{ij}$ satisfy \emph{(H1)}-\emph{(H2)} for some $\beta>0$. Then equation
\eqref{e3} has a unique solution $(Y,Z,K)\in \mathfrak{S}_{G}^{2}(0,T)$.
\end{theorem}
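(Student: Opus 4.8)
The plan is to follow the now-standard two-step scheme for $G$-BSDEs: first solve the equation when the coefficients $f,g_{ij}$ do not depend on the unknowns $(Y,Z)$, and then treat the general Lipschitz case by a fixed-point (Picard) argument based on a priori estimates. Throughout, the two genuinely nonlinear ingredients are the sublinear conditional expectation $\mathbb{\hat{E}}_t[\cdot]$ and the representation theorem for $G$-martingales, the latter being what forces the appearance of the decreasing $G$-martingale $K$.

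\textbf{Step 1 (decoupled coefficients).} Suppose first that $f(s,\omega)$ and $g_{ij}(s,\omega)$ are independent of $(y,z)$ and lie in $M_{G}^{2+\beta}(0,T)$. I would set
\[
\tilde{Y}_{t}:=\mathbb{\hat{E}}_{t}\Big[\xi+\int_{t}^{T}f(s)\,ds+\int_{t}^{T}g_{ij}(s)\,d\langle B^{i},B^{j}\rangle_{s}\Big],
\]
and consider the $G$-martingale $M_{t}:=\tilde{Y}_{t}+\int_{0}^{t}f(s)\,ds+\int_{0}^{t}g_{ij}(s)\,d\langle B^{i},B^{j}\rangle_{s}=\mathbb{\hat{E}}_{t}[\xi+\int_{0}^{T}f\,ds+\int_{0}^{T}g_{ij}\,d\langle B^{i},B^{j}\rangle]$. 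By the martingale representation theorem for $G$-martingales there exist $Z\in M_{G}^{2}(0,T;\mathbb{R}^{d})$ and a decreasing $G$-martingale $K$ with $K_{0}=0$ such that $M_{t}=M_{0}+\int_{0}^{t}Z_{s}\,dB_{s}+K_{t}$. Setting $Y:=\tilde{Y}$ and substituting back then yields a solution $(Y,Z,K)\in\mathfrak{S}_{G}^{2}(0,T)$; the extra exponent $\beta$ is exactly what guarantees $Z\in M_{G}^{2}$ and $K_{T}\in L_{G}^{2}$ through the quantitative form of the representation theorem.

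\textbf{Step 2 (Lipschitz coefficients).} For the general case I would define a map $\Lambda$ on $\mathfrak{S}_{G}^{2}(0,T)$ by freezing the unknowns: given $(y,z)$, Step 1 produces a unique solution of the BSDE with driver $f(s,y_{s},z_{s})$ and $g_{ij}(s,y_{s},z_{s})$, and we set $\Lambda(y,z)=(Y,Z)$. The heart of the argument is the a priori estimate obtained by applying the $G$-It\^{o} formula (Theorem \ref{Thm6.5}) to $e^{\beta' s}|Y_{s}-Y_{s}'|^{2}$ for the difference of two inputs, using the Lipschitz bound (H2) together with the non-degeneracy of $G$ to absorb the quadratic term in $Z-Z'$ via Young's inequality; choosing the auxiliary parameter $\beta'$ large (or the horizon short and then patching intervals) makes $\Lambda$ a contraction. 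Its unique fixed point is the desired solution, and uniqueness in $\mathfrak{S}_{G}^{2}(0,T)$ follows from the same estimate.

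The main obstacle is controlling the decreasing $G$-martingale part. Unlike the classical theory under a single probability measure, here one must bound $\mathbb{\hat{E}}[|K_{T}|^{2}]$, which I would extract from the $G$-It\^{o} expansion of $|Y|^{2}$ by isolating $K_{T}=Y_{0}-\xi-\int_{0}^{T}f\,ds-\int_{0}^{T}g_{ij}\,d\langle B^{i},B^{j}\rangle+\int_{0}^{T}Z\,dB$ and estimating each term. Because $\mathbb{\hat{E}}$ is only sublinear and lacks dominated convergence, closing these estimates genuinely requires the higher integrability $L_{G}^{2+\beta}$ rather than mere $L_{G}^{2}$, and it is precisely the control of $Z$ and $K_{T}$, rather than of $Y$, that is delicate. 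The Burkholder--Davis--Gundy inequalities and the basic a priori estimates in the $G$-framework, on which both steps rest, form the technical core and would be invoked from the $G$-stochastic analysis theory.
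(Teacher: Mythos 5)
Your Step 1 is essentially sound: when the generators do not depend on $(Y,Z)$, the solution can indeed be produced from the martingale representation theorem for $G$-martingales (Song's theorem), and your remark that the extra exponent $\beta$ is what buys $Z\in M_{G}^{2}$ and $K_{T}\in L_{G}^{2}$ is correct. The gap is in Step 2, and it is not a technicality: it is precisely the obstruction that forced the authors of \cite{HJPS} (from which the paper quotes this theorem without proof) to abandon the Picard scheme altogether. When you take the difference of two outputs of your map $\Lambda$, each output carries its own decreasing $G$-martingale, and the $G$-It\^{o} expansion of $e^{\beta's}|\Delta Y_{s}|^{2}$ produces the term $-2\int e^{\beta's}\Delta Y_{s}\,d(K_{s}-K'_{s})$. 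A difference of decreasing $G$-martingales has neither a sign nor any martingale property under the sublinear expectation, so this term can neither be dropped nor given a favorable sign; the only available bound is
\[
\Big|\int_{0}^{T}\Delta Y_{s}\,d(K_{s}-K'_{s})\Big|\leq \sup_{s\in[0,T]}|\Delta Y_{s}|\,\big(|K_{T}|+|K'_{T}|\big),
\]
and $\mathbb{\hat{E}}[|K_{T}|^{2}]$, $\mathbb{\hat{E}}[|K'_{T}|^{2}]$ are of order one: they are controlled by the full data $\xi,f,g_{ij}$, not by the difference of the inputs. Consequently the resulting estimate for $\|\Delta Z\|_{M_{G}^{2}}^{2}$ contains a term that is \emph{linear} in the input difference with an $O(1)$ coefficient, rather than quadratic with a small one. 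Enlarging $\beta'$ only absorbs terms proportional to $|\Delta Y|^{2}ds$ or $|\Delta Y||\Delta z|ds$, and shortening the horizon does not make $K_{T}$ small (the $K$ in your Step 1 comes from representing a martingale whose terminal value contains $\xi$, which does not shrink with $T$). So $\Lambda$ is not a contraction in any of the proposed regimes, and the claim that ``uniqueness follows from the same estimate'' inherits the same defect.

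The actual proof in \cite{HJPS} is organized in the opposite way. First come the a priori estimates (recalled in this paper as Theorem \ref{pro3.5}), whose proofs are delicate precisely because of the $K$-terms and which yield uniqueness. Existence is then obtained not by a fixed point but by approximation through ``Markovian'' data: for $\xi=\varphi(B_{t_{1}},\dots,B_{t_{n}})$ with cylindrical Lipschitz $f,g_{ij}$, the solution is constructed explicitly by backward induction on the intervals $[t_{i},t_{i+1}]$ via the nonlinear Feynman--Kac correspondence, i.e. by solving fully nonlinear parabolic PDEs; in that construction $Y$ and $Z$ are read off from the PDE solution and $K$ appears explicitly in the form $\int(\cdots)\,d\langle B^{i},B^{j}\rangle_{s}-\int 2G(\cdots)\,ds$ (compare Remark \ref{yw1}), so no representation theorem and no contraction are ever invoked. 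General data are handled by approximation, with convergence controlled by the a priori estimates; this limiting procedure is where the $L_{G}^{2+\beta}$ integrability is genuinely consumed, via the Doob-type estimates of the $G$-framework. If you want to repair your argument, the missing idea is this Markovian/PDE construction; the fixed-point route cannot be closed as stated.
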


\begin{remark} \label{yw1}{\upshape
Note that there exist non-trivial decreasing and continuous $G$-martingales.
Indeed, $\{\int^{t}_0\xi_s^{ij}d\langle
B^i,B^j\rangle_s-2\int^{t}_0G(\xi_s)ds\}_{0\leq t\leq T}$ is a  typical decreasing $G$-martingale for each $\xi_s^{ij}\in M_{G}^{1}(0,T)$.
Then the martingale representation theorem (MRP) in the $G$-framework is much more complicated than the classical case, see \cite{P10, STZ, Song11}.
}
\end{remark}

Moreover, we have the following estimates.
\begin{theorem}[\cite{HJPS}]
\label{pro3.5}  Let $\xi^{l}\in L_{G}^{2+\beta}(\Omega_{T})$,
$l=1,2$ and $f^{l}$, $g_{ij}^{l}$ satisfy \emph{(H1)}-\emph{(H2)} for some $\beta>0$.
Assume that $(Y^{l},Z^{l},K^{l})\in\mathfrak{S}_{G}^{2}(0,T)$, $l=1,2$ is the solution of equation \eqref{e3} corresponding to the data
$(\xi^{l},f^{l},g_{ij}^{l})$. Set $\hat{Y}_{t}=Y_{t}^{1}-Y_{t}^{2}$.
Then there exists a constant $C$ depending on
$T$, $G$, $L_1$ such that%
\begin{align*}
&|\hat{Y}_{t}|^{2}\leq C \mathbb{\hat{E}}_t[|\hat{\xi}|^{2}+(\int_{0}%
^{T}\hat{h}_{s}ds)^{2}],\\
&\mathbb{\hat{E}}[\sup_{t\in\lbrack0,T]}|\hat{Y}_{t}|^{2}]    \leq
C\{ \mathbb{\hat{E}}[\sup_{t\in\lbrack0,T]}%
\mathbb{\hat{E}}_{t}[|\hat{\xi}|^{2}]]  +\mathbb{\hat{E}}[\sup_{t\in\lbrack0,T]}\mathbb{\hat{E}}_{t}[(\int_{0}%
^{T}\hat{h}_{s}ds)^{2}]]\}.
\end{align*}
where $\hat{\xi}=\xi^{1}-\xi^{2}$ and $\hat{h}_{s}=|f^{1}(s,Y_{s}^{2},Z_{s}%
^{2})-f^{2}(s,Y_{s}^{2},Z_{s}^{2})|+\sum_{i,j=1}^{d}|g_{ij}^{1}(s,Y_{s}%
^{2},Z_{s}^{2})-g_{ij}^{2}(s,Y_{s}^{2},Z_{s}^{2})|$.
\end{theorem}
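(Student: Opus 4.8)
The plan is to establish the first (pointwise) estimate and then read off the second as an immediate corollary. Indeed, once one has $|\hat{Y}_t|^2\le C\,\mathbb{\hat{E}}_t[|\hat{\xi}|^2+(\int_0^T\hat{h}_s\,ds)^2]$ for every $t$, taking $\sup_{t\in[0,T]}$ on both sides and then applying $\mathbb{\hat{E}}[\cdot]$, and using the sub-additivity of $\mathbb{\hat{E}}_t$ (to split $|\hat{\xi}|^2$ from $(\int_0^T\hat{h}_s\,ds)^2$) together with the monotonicity and sub-additivity of $\mathbb{\hat{E}}$, produces exactly the two terms on the right-hand side of the second inequality. So I would concentrate entirely on the pointwise bound. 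The first step is to linearize: writing $\hat{Z}=Z^1-Z^2$ and $\hat{K}=K^1-K^2$, the difference $\hat{Y}$ satisfies the (forward) equation driven by the increments of $f^1-f^2$ and $g_{ij}^1-g_{ij}^2$. Splitting each as a Lipschitz part plus an inhomogeneous part, e.g. $f^1(s,Y^1,Z^1)-f^2(s,Y^2,Z^2)=a_s\hat{Y}_s+\langle b_s,\hat{Z}_s\rangle+\Delta f_s$ with $|a_s|,|b_s|\le L_1$ by (H2) and $\Delta f_s=f^1(s,Y^2_s,Z^2_s)-f^2(s,Y^2_s,Z^2_s)$, exhibits $\hat{Y}$ as the solution of a linear equation whose inhomogeneity is precisely what $\hat{h}_s$ measures.

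Next I would apply the $G$-It\^o formula (Theorem \ref{Thm6.5}) to $|\hat{Y}_s|^2$. Using $d\langle B^i,B^j\rangle_s=\gamma_s^{ij}\,ds$ with $\gamma_s\in\Gamma$ and $\underline{\sigma}^2 I\le\gamma_s\le\bar{\sigma}^2 I$, the second-order term yields the coercive contribution $-\hat{Z}_s^{\top}\gamma_s\hat{Z}_s\le-\underline{\sigma}^2|\hat{Z}_s|^2$. By (H2) and Young's inequality, all terms linear in $\hat{Z}_s$ (coming from the Lipschitz parts of the two drivers) are dominated by $\tfrac{\underline{\sigma}^2}{2}|\hat{Z}_s|^2+C|\hat{Y}_s|^2$ and hence absorbed by this coercive term, while the genuinely inhomogeneous contributions are bounded by $C|\hat{Y}_s|\hat{h}_s$. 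Taking the conditional expectation $\mathbb{\hat{E}}_t$ and using that the symmetric integral $\int 2\hat{Y}_s\hat{Z}_s\,dB_s$ has zero $\mathbb{\hat{E}}_t$-mean, I arrive at
\[
|\hat{Y}_t|^2\le\mathbb{\hat{E}}_t\Big[|\hat{\xi}|^2+C\int_t^T|\hat{Y}_s|^2\,ds+C\int_t^T|\hat{Y}_s|\hat{h}_s\,ds-2\int_t^T\hat{Y}_s\,d\hat{K}_s\Big].
\]

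The main obstacle is the last term, involving the difference $\hat{K}=K^1-K^2$ of the two decreasing $G$-martingales, a feature with no counterpart in the classical theory. Because $G$ is only sublinear, $\hat{K}$ is in general neither monotone nor a $G$-martingale, so the classical sign trick (discarding $-2\int\hat{Y}_s\,dK_s$ because $dK_s\le0$) and any martingale-killing argument both fail; moreover the crude bound $\int_t^T|d\hat{K}_s|\le(K^1_t-K^1_T)+(K^2_t-K^2_T)$ reintroduces the norms of the \emph{individual} solutions and would destroy the purely differential right-hand side. The resolution I would follow is the one underlying \cite{HJPS}: treat this term jointly with the $\hat{Z}$-term, exploiting the Burkholder--Davis--Gundy-type inequalities available for $G$-stochastic integrals together with the explicit $\int\eta\,d\langle B\rangle-2\int G(\eta)\,ds$ structure of decreasing $G$-martingales (Remark \ref{yw1}), and combining this with the Young splitting $-2\int_t^T\hat{Y}_s\,d\hat{K}_s\le\varepsilon\sup_{s\in[t,T]}|\hat{Y}_s|^2+C_\varepsilon(\cdots)$, the $\varepsilon$-part being absorbed after passing to the supremum estimate. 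Controlling the remaining factor in terms of $\hat{\xi}$ and $\hat{h}$ alone, rather than the individual solution norms, is exactly where the $G$-specific estimates are needed, and is the step I expect to be genuinely delicate.

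Finally, the displayed inequality has the backward Gronwall form $|\hat{Y}_t|^2\le\mathbb{\hat{E}}_t[R+C\int_t^T|\hat{Y}_s|^2\,ds]$. Applying a Gronwall lemma for the conditional $G$-expectation, obtained by iteration from the monotonicity and sub-additivity of $\mathbb{\hat{E}}_t$, yields $|\hat{Y}_t|^2\le e^{C(T-t)}\mathbb{\hat{E}}_t[R]$; bounding the cross term there by $C\int_t^T|\hat{Y}_s|\hat{h}_s\,ds\le\varepsilon\sup_{s}|\hat{Y}_s|^2+C_\varepsilon(\int_0^T\hat{h}_s\,ds)^2$ and absorbing $\varepsilon\sup_s|\hat{Y}_s|^2$ then gives the first inequality. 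The second inequality follows as explained at the outset.
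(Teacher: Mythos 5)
You should first be aware that this paper contains no proof of Theorem \ref{pro3.5}: it is quoted verbatim from \cite{HJPS}, so your attempt can only be compared with that source and with the analogous arguments this paper does carry out (notably the proof of Theorem \ref{le-dpp-3}). Several pieces of your proposal are sound: the reduction of the second estimate to the first is correct (sub-additivity of $\mathbb{\hat{E}}_t$ inside the supremum, then monotonicity and sub-additivity of $\mathbb{\hat{E}}$ outside, give exactly the nested right-hand side); the linearization of $f^1(s,Y^1_s,Z^1_s)-f^2(s,Y^2_s,Z^2_s)$ into Lipschitz and inhomogeneous parts is standard; and applying It\^{o}'s formula to $|\hat{Y}_s|^2$ is legitimate, with the caveat that Theorem \ref{Thm6.5} does not literally cover processes containing the finite-variation parts $K^l$, so one must invoke the classical It\^{o} formula under each $P\in\mathcal{P}$ and conclude quasi-surely.

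The genuine gap is exactly the step you flag but never execute: the term $-2\int_t^T\hat{Y}_s\,d\hat{K}_s$. Your proposed resolution does not close. First, Remark \ref{yw1} only exhibits $\int\eta\,d\langle B\rangle-2\int G(\eta)\,ds$ as a \emph{typical example} of a decreasing $G$-martingale; neither $K^1$ nor $K^2$ is known to have this form, so there is no ``explicit structure'' available to exploit. Second, in the Young splitting $-2\int_t^T\hat{Y}_s\,d\hat{K}_s\le\varepsilon\sup_{s}|\hat{Y}_s|^2+C_\varepsilon(\cdots)$ the unnamed factor $(\cdots)$ is the squared total variation of $\hat{K}$, which is bounded only by the individual $K^l$'s and therefore does not vanish when $\hat{\xi}=0$, $\hat{h}=0$ --- precisely the defect you yourself identified; you concede that replacing it by a quantity controlled by $(\hat{\xi},\hat{h})$ is ``genuinely delicate,'' i.e.\ the core of the proof is missing. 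Third, the absorption scheme is invalid: in the conditional estimate the left-hand side is $|\hat{Y}_t|^2$ while the $\varepsilon$-term is $\varepsilon\,\mathbb{\hat{E}}_t[\sup_s|\hat{Y}_s|^2]$, a larger quantity that cannot be absorbed into it; and absorbing after applying $\mathbb{\hat{E}}[\sup_t(\cdot)]$ would require a Doob-type bound $\mathbb{\hat{E}}[\sup_t\mathbb{\hat{E}}_t[\zeta]]\le C\,\mathbb{\hat{E}}[\zeta]$, which is not available under $G$-expectation --- indeed its unavailability is the very reason the second estimate of the theorem is stated with the nested expression $\mathbb{\hat{E}}[\sup_t\mathbb{\hat{E}}_t[\cdot]]$ rather than $\mathbb{\hat{E}}[\cdot]$ on the right. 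The mechanism that actually works, used in \cite{HJPS,HJPS1} and reproduced in this paper in the proof of Theorem \ref{le-dpp-3}, is of a different nature: after linearization one represents the resulting linear $G$-BSDE in the auxiliary extended $\tilde{G}$-expectation space via Lemma \ref{the5.2}, where the identity \eqref{yw2} makes the decreasing-$G$-martingale contribution appear as exactly $K_t$ on both sides and cancel, so the $\hat{K}$-term never has to be estimated by BDG or Young inequalities at all. Without this (or an equivalent device), your argument does not yield the stated bounds.
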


However, unlike the classical case, the  explicit solutions of linear $G$-BSDEs can only be stated in an auxiliary extended sublinear expectation space.
Suppose that $f({s},Y_s,Z_s)=a_{s}Y_{s}+b_{s}Z_{s}+m_{s}$ and $g_{ij}({s},Y_s,Z_s)=c_{s}^{ij}Y_{s}+d_{s}^{ij}Z_{s}+n^{ij}_{s}$, where $(a_{s})_{s\in\lbrack0,T]}$,
$(c^{ij}_{s})_{ s\in\lbrack0,T]}\in M_{G}^{2}(0,T)$, $(b_{s})_{s\in\lbrack0,T]}$,$(d^{ij}_{s})_{s\in\lbrack0,T]}\in M_{G}^{2}(0,T;\mathbb{R}^d)$ are bounded
processes and $\xi\in L_{G}^{2+\beta}(\Omega_{T})$ for some $\beta>0$,
$(m_{s})_{s\in\lbrack0,T]}$, $(n^{ij}_{s})_{s\in\lbrack0,T]}\in M_{G}^{2}(0,T)$.
Then we construct an auxiliary extended
$\tilde{G}$-expectation space $(\tilde{\Omega},L_{\tilde{G}}^{1}%
(\tilde{\Omega}),\mathbb{\hat{E}}^{\tilde{G}})$ with $\tilde{\Omega}=C_{0}([0,\infty),\mathbb{R}^{2d})$ and%
\begin{align}\label{yw7}
\tilde{G}(A)=\frac{1}{2}\sup_{Q\in\Gamma}\mathrm{tr}\left[  A\left[
\begin{array}
[c]{cc}%
Q & I_d\\
I_d & Q^{-1}%
\end{array}
\right]  \right]  ,\ A\in\mathbb{S}({2d}).
\end{align}
Let $(B_{t},\tilde{B}_{t})_{t\geq 0}$ be the canonical process in the extended space.

\begin{lemma}[\cite{HJPS1}]
\label{the5.2} In the extended $\tilde{G}$-expectation space, the solution of
the linear $G$-BSDE \eqref{e3} can be represented as%
\begin{equation*}
Y_{t}=\mathbb{\hat{E}}_{t}^{\tilde{G}}[\tilde{\Gamma}_{T}^t\xi+\int_{t}^{T}%
m_{s}\tilde{\Gamma}_s^tds+\int_{t}^{T}n_{s}^{ij}\tilde{\Gamma}^t_sd\langle B^i,B^j\rangle_{s}],
\end{equation*}
where $\{\tilde{\Gamma}^t_s\}_{s\in\lbrack t,T]}$ is the solution of the  following $\tilde{G}$-SDE:\begin{equation}
\tilde{\Gamma}^t_s=1+\int_{t}^{s}a_{r}\tilde{\Gamma}^t_rdr+\int_{t}^{s}c^{ij}_{r}\tilde{\Gamma}^t_rd\langle B^i, B^j\rangle
_{r}+\int_{t}^{s}d_{r}^{ij}\tilde{\Gamma}^t_rdB_{r}+\int_{t}^{s}b_{r}\tilde{\Gamma}^t_{r}d\tilde{B}_{r}.
\label{LSDE2}%
\end{equation}
Moreover,
\begin{equation}\label{yw2}
\mathbb{\hat{E}}_{t}^{\tilde{G}}[\tilde{\Gamma}^t_TK_{T}-\int_{t}^{T}a_{s}K_{s}\tilde{\Gamma}^t_s%
ds-\int_{t}^{T}c^{ij}_{s}K_{s}\tilde{\Gamma}^t_sd\langle
B^i, B^j\rangle_{s}]=K_{t}.
\end{equation}
\end{lemma}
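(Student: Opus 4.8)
The plan is to mirror the classical linear-BSDE formula: apply the $G$-It\^o product rule to $\tilde\Gamma^t_sY_s$ and then condition, the point being that the enlarged space is engineered precisely so that the two linearization terms cancel. Two structural facts come first. By the block form \eqref{yw7} the mutual variation of the two canonical noises is deterministic, $d\langle B^i,\tilde B^j\rangle_s=\delta_{ij}\,ds$, and the $B$-marginal of $\mathbb{\hat E}^{\tilde G}$ still ranges over $\Gamma$ (the top-left block is $Q$); hence every $\sigma(B)$-functional keeps its law, so in particular $K$ remains a decreasing $\tilde G$-martingale on $[t,T]$. Moreover $\tilde\Gamma^t_\cdot$, solving the linear $\tilde G$-SDE \eqref{LSDE2} from $\tilde\Gamma^t_t=1$, is strictly positive q.s.

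Next I would apply Theorem \ref{Thm6.5} to $\tilde\Gamma^t_sY_s$ on $[t,T]$. Writing, from \eqref{e3}, $dY_s=-(a_sY_s+b_sZ_s+m_s)\,ds-(c^{ij}_sY_s+d^{ij}_sZ_s+n^{ij}_s)\,d\langle B^i,B^j\rangle_s+Z_s\,dB_s+dK_s$ and $d\tilde\Gamma^t_s$ from \eqref{LSDE2}, the drifts produced by $\tilde\Gamma\,dY$, $Y\,d\tilde\Gamma$ and $d\langle\tilde\Gamma,Y\rangle$ collapse: the $a_sY_s$ terms cancel between the first two, while $-\tilde\Gamma^t_sb_sZ_s\,ds$ is killed by the cross term $b_s\tilde\Gamma^t_sZ_s\,d\langle\tilde B,B\rangle=b_s\tilde\Gamma^t_sZ_s\,ds$ (this is exactly why $b$ sits on the $d\tilde B$-integral). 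The same cancellation holds for the $d\langle B^i,B^j\rangle$-coefficients ($c$ against $c$, $d$ against the $dB$--$dB$ cross term), leaving only the inhomogeneous pieces. Integrating and using $\tilde\Gamma^t_t=1$, $Y_T=\xi$ gives the pathwise identity
\[
Y_t=\tilde\Gamma^t_T\xi+\int_t^T m_s\tilde\Gamma^t_s\,ds+\int_t^T n^{ij}_s\tilde\Gamma^t_s\,d\langle B^i,B^j\rangle_s-M^t_T-\int_t^T\tilde\Gamma^t_s\,dK_s,
\]
where $M^t_T$ collects the $dB$- and $d\tilde B$-stochastic integrals.

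The claimed right-hand side is $\mathbb{\hat E}^{\tilde G}_t[A]$ with $A:=\tilde\Gamma^t_T\xi+\int_t^T m_s\tilde\Gamma^t_s\,ds+\int_t^T n^{ij}_s\tilde\Gamma^t_s\,d\langle B^i,B^j\rangle_s$. Substituting $A=Y_t+M^t_T+\int_t^T\tilde\Gamma^t_s\,dK_s$ from the identity above, pulling out the $\mathcal F_t$-measurable term $Y_t$, and absorbing the symmetric $\tilde G$-martingale $M^t_T$ (adding a symmetric martingale leaves the conditional sublinear expectation unchanged), the representation reduces to the single key identity
\[
\mathbb{\hat E}^{\tilde G}_t\Big[\int_t^T\tilde\Gamma^t_s\,dK_s\Big]=0.
\]

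This is the main obstacle, and it is where the decreasing $G$-martingale must be treated by hand rather than by a classical ``integral against a martingale is a martingale'' argument. I would prove it by approximating the positive continuous process $\tilde\Gamma^t$ from the left by simple positive processes $\tilde\Gamma^{t,n}_s=\sum_k\tilde\Gamma^t_{s_k}\mathbf 1_{[s_k,s_{k+1})}(s)$, for which $\int_t^T\tilde\Gamma^{t,n}_s\,dK_s=\sum_k\tilde\Gamma^t_{s_k}(K_{s_{k+1}}-K_{s_k})$. Conditioning backward and using the positive homogeneity of the conditional expectation to pull out the nonnegative $\mathcal F_{s_k}$-measurable factor $\tilde\Gamma^t_{s_k}$, together with $\mathbb{\hat E}^{\tilde G}_{s_k}[K_{s_{k+1}}-K_{s_k}]=0$, every summand vanishes, so $\mathbb{\hat E}^{\tilde G}_t[\int_t^T\tilde\Gamma^{t,n}_s\,dK_s]=0$ for each $n$. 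To pass to the limit I would use that $K$ is decreasing with $K_T$ square integrable (it lies in the solution space $\mathfrak S^2_G(0,T)$), so its total variation on $[t,T]$ equals $K_t-K_T$, whence $|\int_t^T(\tilde\Gamma^{t,n}_s-\tilde\Gamma^t_s)\,dK_s|\le \sup_s|\tilde\Gamma^{t,n}_s-\tilde\Gamma^t_s|\,(K_t-K_T)$; a H\"older estimate under $\mathbb{\hat E}^{\tilde G}$ together with uniform moment bounds for $\tilde\Gamma^t$ and the continuity of $\tilde\Gamma^t$ then forces the conditional expectations to $0$. The bulk of the technical effort lies in these uniform bounds and in justifying the limit. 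Finally, \eqref{yw2} is immediate: setting $L_s:=\tilde\Gamma^t_sK_s-\int_t^s a_rK_r\tilde\Gamma^t_r\,dr-\int_t^s c^{ij}_rK_r\tilde\Gamma^t_r\,d\langle B^i,B^j\rangle_r$, an integration by parts gives $\mathbb{\hat E}^{\tilde G}_t[L_T]=K_t+\mathbb{\hat E}^{\tilde G}_t[\int_t^T\tilde\Gamma^t_s\,dK_s]=K_t$ by the key identity.
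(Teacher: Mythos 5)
Your proposal is correct, but note that the paper itself offers no proof of this lemma: it is imported verbatim from \cite{HJPS1}, so the only proof to compare against is the one in that reference. Your argument is essentially that proof: the It\^{o} product rule applied to $\tilde{\Gamma}^t_sY_s$, with the cancellations engineered by the fact that the off-diagonal block in \eqref{yw7} is $I_d$ (so $d\langle B^i,\tilde{B}^j\rangle_s=\delta_{ij}\,ds$), the reduction of everything to the key identity $\mathbb{\hat{E}}^{\tilde{G}}_t[\int_t^T\tilde{\Gamma}^t_s\,dK_s]=0$, and the proof of that identity by partitioning, pulling out the nonnegative $\mathcal{F}_{s_k}$-measurable factors via positive homogeneity, invoking that $K$ stays a decreasing $\tilde{G}$-martingale (since $\tilde{G}$ restricted to the top-left block equals $G$), and passing to the limit using the total variation bound $K_t-K_T$; the technical points you leave as standard (integrability making the $dB$- and $d\tilde{B}$-integrals genuine symmetric martingales, and the uniform-continuity estimates for $\tilde{\Gamma}^t$ in the limit passage) are handled in \cite{HJPS1} by exactly the stability and moment estimates you invoke.
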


\section{Formulation of the problem}
We now introduce the definition of admissible control.  Assume  $U$ is a given compact subset of
$\mathbb{R}^{m}$.
\begin{definition}
For each given $t\geq 0$, $u:[t,\infty)\times\Omega\rightarrow U$ is said to be an admissible control on
$[t,\infty)$, if $u\in M_{G}^{2}(t,\infty;\mathbb{R}^{m})$, where $ M_{G}^{2}(t,\infty;\mathbb{R}^{m})= \underset{T>t}{\cap}M_{G}^{2}(t,T;\mathbb{R}^{m})$, i.e.,
 $\{u_s\}_{0\leq s\leq T}\in M_{G}^{2}(t,T;\mathbb{R}^{m})$ for each $T\geq t$.
 The set of admissible controls on $[t,\infty)$ is denoted by $\mathcal{U}[t,\infty)$. Similarly, we can define $\mathcal{U}[t,T]$.
\end{definition}

For each $t\geq 0$, $u\in\mathcal{U}[t,\infty)$ and $\xi\in L^{p}_G(\Omega_t)$  with $p>2$, consider the
following $G$-SDEs: \begin{align} \label{App1y}
&X_{s}^{t,\xi,u}=\xi+\int^s_tb(X_{r}^{t,\xi,u},u_r)dr+\int^s_th_{ij}(X_{r}^{t,\xi,u},u_r)d\langle
B^i,B^j\rangle_{r}+\int^s_t\sigma(X_{r}^{t,\xi,u},u_r)dB_{r}\end{align}
and $G$-BSDEs with infinite horizon:
\begin{align} \label{App1m}
\begin{cases}&
Y_{s}^{t,\xi,u}   =Y_{T}^{t,\xi,u}+\int_{s}^{T}f(X_{r}^{t,\xi,u}%
,Y_{r}^{t,\xi,u},Z_{r}^{t,\xi,u},u_r)dr-\int_{s}^{T}Z_{r}^{t,\xi,u}dB_{r}\\
& \ \ \ \ \ \ \ +\int_{s}^{T}g_{ij}(X_{r}^{t,\xi,u}%
,Y_{r}^{t,\xi,u},Z_{r}^{t,\xi,u},u_r)d\langle B^i,B^j\rangle_{r} -(K_{T}^{t,\xi,u}-K_{s}^{t,\xi,u}),
\end{cases}
\end{align}
where $b$, $h_{ij}:\mathbb{R}^{n}\times U\rightarrow
\mathbb{R}^{n}$, $\sigma:\mathbb{R}^{n}\times U\rightarrow
\mathbb{R}^{n\times d}$, $f$, $g_{ij}:$
$\mathbb{R}^{n}\times\mathbb{R}\times\mathbb{R}^{d}\times U\rightarrow
\mathbb{R}$ are deterministic continuous functions. For convenience,  set $(X^{x,u},Y^{x,u},Z^{x,u},K^{x,u})=(X^{0,x,u},Y^{0,x,u},Z^{0,x,u},K^{0,x,u})$ for each $(x,u)\in\mathbb{R}^n\times\mathcal{U}[0,\infty).$

In this paper, we shall use the following assumptions:
\begin{description}
\item[(B1)] $h_{ij}=h_{ji}$ and $g_{ij}=g_{ji}$ for each $1\leq i,j\leq d$;
\item[(B2)] There exist some positive constants $L,\alpha_1$ and $\alpha_2$ such that%
\begin{align*}
&|b(x,u)-b(x^{\prime},u^{\prime})|+\sum\limits_{i,j}|h_{ij}(x,u)-h_{ij}(x^{\prime
},u^{\prime})|\leq
L(|x-x^{\prime}|+|u-u^{\prime}|), \\& |\sigma(x,u)-\sigma(x^{\prime},u^{\prime})|\leq
\alpha_1|x-x^{\prime}|+L|u-u^{\prime}|,\\
&  |f(x,y,z,u)-f(x^{\prime},y^{\prime},z^{\prime},u^{\prime})|+
\sum\limits_{i,j}|g_{ij}(x,y,z,u)-g_{ij}(x^{\prime},y^{\prime},z^{\prime},u^{\prime})|\\
&  \leq L((1+|x|+|x^{\prime}|)|x-x^{\prime}|+|y-y^{\prime
}|+|u-u^{\prime}|)+\alpha_2|z-z^{\prime}|;
\end{align*}
\item[(B3)] There exists a constant $\mu>0$  such that $(f(x,y,z,u)-f(x,y^{\prime},z,u))(y-y^{\prime})+2G((g_{ij}(x,y,z,u)-g_{ij}(x,y^{\prime},z,u))(y-y^{\prime}))\leq
-\mu|y-y^{\prime}|^2 $;
\item[(B4)]
$G(\sum\limits_{i=1}^n(\sigma_i(x,u)-\sigma_i(x^{\prime},u))^{\top}(\sigma_i(x,u)-\sigma_i(x^{\prime},u))+2(\langle x-x^{\prime},h_{ij}(x,u)-h_{ij}(x^{\prime},u)\rangle)_{i,j=1}^d )
+\langle x-x^{\prime},b(x,u)-b(x^{\prime},u)\rangle\leq -\eta|x-x^{\prime}|^2$ for some constant $\eta> 0$, where $\sigma_i$ is the $i$-th row of $\sigma$;
\item[(B5)] $\bar{\eta}:=\eta-(1+\bar{\sigma}^2)\alpha_1\alpha_2>0$.
\end{description}

The following estimates about $G$-SDEs can be found in Chapter V of
Peng \cite{P10}.
\begin{lemma}
\label{myq.1}
Under assumption \emph{(B2)},
the $G$-SDE \eqref{App1y} has a unique solution $X^{t,\xi, u}\in M^2_G(t,T)$ for each $T>t$.
Moreover, if $\xi$, $\xi^{\prime}\in L_{G}^{p}(\Omega_{t})$ with $p> 2$, then we have, for each $\delta\in\lbrack0,T-t]$,%
\begin{description}
\item[(i)] $\mathbb{\hat{E}}_{t}[|X_{t+\delta}^{t,\xi,u}-X_{t+\delta}^{t,\xi^{\prime}%
,u^{\prime}}|^{p}]\leq{C}_T(|\xi-\xi^{\prime}|^{p}+\mathbb{\hat{E}}_{t}[\int%
_{t}^{t+\delta}|u_{s}-u^{\prime}_{s}|^{p}ds]);$
\item[(ii)] $\mathbb{\hat{E}}_{t}[\sup\limits_{s\in\lbrack t,T]}|X_{s}^{t,\xi,u}|^{p}]\leq C_{T}(1+|\xi|^{p});$
\item[(iii)] $\mathbb{\hat{E}}_{t}[\sup\limits_{s\in\lbrack t,t+\delta]}|X_{s}^{t,\xi,u}-\xi
|^{p}]\leq C_{T}(1+|\xi|^{p})\delta^{p/2}$,
\end{description}
where the constant $C_{T}$ depends on $L$, $\alpha_1$, $G$, $p$, $n$, $U$ and $T$.
\end{lemma}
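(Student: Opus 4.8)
The well-posedness of \eqref{App1y} is the standard $G$-SDE existence and uniqueness result: under the Lipschitz assumption (B2) a Picard iteration is a contraction on $M_{G}^{2}(t,T)$ for $T-t$ small, and patching finitely many such intervals gives a unique solution on every $[t,T]$; this is Chapter V of \cite{P10}, which I would simply invoke. The three a priori estimates all rest on one scheme: raise the relevant difference (resp.\ solution, resp.\ increment) to the power $p$, take first $\sup$ in time and then the conditional $G$-expectation $\mathbb{\hat{E}}_{t}$, control the $dB$-integral by the Burkholder--Davis--Gundy inequality for $G$-It\^{o} integrals and the $ds$- and $d\langle B^{i},B^{j}\rangle$-integrals by H\"older's inequality, and then close with Gronwall's lemma. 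Everywhere I would use that the non-degeneracy of $G$ gives $\underline{\sigma}^{2}\,dr\leq d\langle B^{i},B^{j}\rangle_{r}\leq\bar{\sigma}^{2}\,dr$ in the sense of densities, so each $d\langle B^{i},B^{j}\rangle$-integral is dominated by a $dr$-integral up to the factor $\bar{\sigma}^{2}$.

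For (i), put $\hat{X}_{s}=X_{s}^{t,\xi,u}-X_{s}^{t,\xi',u'}$ and subtract the two copies of \eqref{App1y}. After applying $|\cdot|^{p}$, taking $\sup_{r\in[t,s]}$ and $\mathbb{\hat{E}}_{t}[\cdot]$, and using sub-additivity of $\mathbb{\hat{E}}_{t}$, the drift and $d\langle B^{i},B^{j}\rangle$ terms are bounded by H\"older and the $dB$ term by BDG; the Lipschitz bounds in (B2) (where $\sigma$ is Lipschitz in $x$ with constant $\alpha_{1}$ and in $u$ with constant $L$) then replace each integrand by $C(|\hat{X}_{r}|+|u_{r}-u'_{r}|)$. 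Writing $\phi(s):=\mathbb{\hat{E}}_{t}[\sup_{r\in[t,s]}|\hat{X}_{r}|^{p}]$ and using $\mathbb{\hat{E}}_{t}[\int_{t}^{s}\cdot\,dr]\leq\int_{t}^{s}\mathbb{\hat{E}}_{t}[\,\cdot\,]\,dr$ (sub-additivity again) together with the $\mathcal{F}_{t}$-measurability of $|\xi-\xi'|^{p}$ and of $\mathbb{\hat{E}}_{t}[\int_{t}^{t+\delta}|u_{r}-u'_{r}|^{p}\,dr]$, I obtain an integral inequality $\phi(s)\leq A+C\int_{t}^{s}\phi(r)\,dr$ with $A=C(|\xi-\xi'|^{p}+\mathbb{\hat{E}}_{t}[\int_{t}^{t+\delta}|u_{r}-u'_{r}|^{p}\,dr])$. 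Gronwall's lemma, applied q.s.\ in $s$, yields $\phi(t+\delta)\leq C_{T}A$, and since $\mathbb{\hat{E}}_{t}[|\hat{X}_{t+\delta}|^{p}]\leq\phi(t+\delta)$ this is exactly (i).

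Estimates (ii) and (iii) reuse the same machinery. For (ii) I would first extract the linear growth bound $|b(x,u)|+\sum_{i,j}|h_{ij}(x,u)|+|\sigma(x,u)|\leq C(1+|x|)$ uniformly in $u\in U$, which follows from (B2), the compactness of $U$ and the continuity of the coefficients (so that $\sup_{u\in U}(|b(0,u)|+\ldots)<\infty$); then the $\sup$--BDG--Gronwall scheme applied to $|X_{s}^{t,\xi,u}|^{p}$ gives $\mathbb{\hat{E}}_{t}[\sup_{s\in[t,T]}|X_{s}^{t,\xi,u}|^{p}]\leq C_{T}(1+|\xi|^{p})$. For (iii) I would estimate the increment $X_{s}^{t,\xi,u}-\xi$ over the short interval $[t,t+\delta]$ directly: by H\"older the $ds$- and $d\langle B\rangle$-integrals contribute $O(\delta^{p})$ and by BDG the $dB$-integral contributes $O(\delta^{p/2})$, each multiplied by $(1+|\xi|^{p})$ after inserting (ii); since $\delta\leq T$ and $p\geq2$ the dominant term is $\delta^{p/2}$, giving (iii).

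The genuinely $G$-specific points, and where I would be most careful, are two. First, $\mathbb{\hat{E}}_{t}$ is random-variable-valued, so the Gronwall step must be read q.s.\ pathwise in $s$, and the interchange of the time integral with $\mathbb{\hat{E}}_{t}$ must be justified through sub-additivity and monotonicity (which, crucially, produce the inequality in the usable direction $\mathbb{\hat{E}}_{t}[\int\cdot]\leq\int\mathbb{\hat{E}}_{t}[\cdot]$). Second, the martingale estimates rely on the BDG inequality for $G$-It\^{o} integrals, valid in the $G$-framework \cite{P10}. The remaining ingredients---H\"older, Gronwall, and the linear growth extracted from compactness of $U$---are classical, so I expect the main obstacle to be purely the careful bookkeeping of the conditional $G$-expectation rather than any genuinely new estimate. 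The stated dependence of $C_{T}$ on $L$, $\alpha_{1}$, $G$, $p$, $n$, $U$ and $T$ is then visible term by term: $L,\alpha_{1}$ from the Lipschitz bounds, $G,p,n$ from the BDG constant, $U$ from the linear growth constant, and $T$ from the factor $e^{CT}$ in Gronwall.
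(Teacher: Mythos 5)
Your proposal is correct and takes essentially the same route as the paper: the paper offers no proof of this lemma at all, merely citing Chapter V of \cite{P10}, and your Picard-iteration/BDG/H\"older/Gronwall scheme, with the conditional expectation $\mathbb{\hat{E}}_{t}$ handled via sub-additivity and a q.s.\ pathwise Gronwall step, is exactly the standard argument behind that citation. One small imprecision: for $i\neq j$ the cross-variation $d\langle B^{i},B^{j}\rangle_{r}$ is a signed measure, so the correct statement is that its total variation is dominated by $\bar{\sigma}^{2}\,dr$ rather than the two-sided density bound $\underline{\sigma}^{2}\,dr\leq d\langle B^{i},B^{j}\rangle_{r}\leq\bar{\sigma}^{2}\,dr$; this is in fact all you use, so it does not affect the argument.
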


We have the following  existence and uniqueness theorem of $G$-BSDE \eqref{App1m} with infinite horizon.
\begin{theorem}\label{HM3}
Let assumptions  \emph{(B1)-(B5)} hold.
Then the $G$-BSDE \eqref{App1m} has a unique solution $(Y^{t,\xi,u}, Z^{t,\xi,u}, K^{t,\xi,u})\in\mathfrak{S}_{G}^{2}(0,\infty)$  such that for some constant $C>0$, \[
|Y^{t,\xi,u}_s|\leq C(1+|X^{t,\xi,u}_s|^2), \ \forall s\geq t \ \ q.s.,
\]
 where
$\mathfrak{S}_{G}^{2}(0,\infty)=\underset{T>0}{\cap}\mathfrak{S}_{G}^{2}(0,T)$.
\end{theorem}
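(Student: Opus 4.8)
The plan is to construct the infinite-horizon solution as the limit of finite-horizon $G$-BSDEs and to control the limit through uniform exponential estimates produced by the dissipativity hypotheses (B3)--(B5), adapting the infinite-horizon $G$-BSDE framework of \cite{HW} to the controlled setting. First I would fix $T>0$ and solve on $[t,T]$ the $G$-BSDE \eqref{App1m} with zero terminal condition at time $T$, denoting its solution by $(Y^T,Z^T,K^T)$. To invoke Theorem \ref{the4.1} I must check that the generator obtained by plugging in $X^{t,\xi,u}$, namely $\tilde f(s,y,z)=f(X_s^{t,\xi,u},y,z,u_s)$ and $\tilde g_{ij}(s,y,z)=g_{ij}(X_s^{t,\xi,u},y,z,u_s)$, satisfies (H1)--(H2). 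Lipschitz continuity in $(y,z)$ is immediate from (B2) (with constants $L$ and $\alpha_2$), while the required integrability $\tilde f(\cdot,0,0),\tilde g_{ij}(\cdot,0,0)\in M_G^{2+\beta}(0,T)$ follows from the quadratic growth bound $|f(x,0,0,u)|+\sum_{i,j}|g_{ij}(x,0,0,u)|\le C(1+|x|^2)$ --- itself a consequence of (B2) together with the continuity of $f,g$ and the compactness of $U$ --- combined with the moment estimates of Lemma \ref{myq.1}(ii). This yields a unique $(Y^T,Z^T,K^T)\in\mathfrak{S}_G^2(0,T)$ for every $T$.

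The core of the argument is a pair of uniform-in-$T$ estimates. On the forward side, applying the $G$-It\^{o} formula of Theorem \ref{Thm6.5} to $|X_s^{t,\xi,u}|^2$ and using (B4) (after absorbing the bounded quantities $b(0,u),h_{ij}(0,u),\sigma(0,u)$ into constants) gives, for the $d\langle B^i,B^j\rangle$ coefficient matrix $\Lambda_s$, the pointwise bound $2\langle X_s,b\rangle+2G(\Lambda_s)\le -2\eta|X_s|^2+C$; taking conditional $G$-expectations and applying Gronwall then produces the uniform dissipative estimate $\mathbb{\hat{E}}_t[|X_s^{t,\xi,u}|^2]\le e^{-2\eta(s-t)}|X_t|^2+C$ for all $s\ge t$. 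On the backward side I would linearize: write $\tilde f(s,y,z)=a_sy+b_s z+m_s$ and $\tilde g_{ij}(s,y,z)=c_s^{ij}y+d_s^{ij}z+n_s^{ij}$ with $m_s=f(X_s,0,0,u_s)$, $n_s^{ij}=g_{ij}(X_s,0,0,u_s)$ and bounded coefficients, and invoke the representation of Lemma \ref{the5.2} in the extended $\tilde G$-space. Hypothesis (B3) is exactly the condition guaranteeing that the discount factor $\tilde\Gamma_s^t$ decays like $e^{-\mu(s-t)}$ in the relevant $\tilde G$-norm, so that $\int_t^\infty\mathbb{\hat{E}}^{\tilde G}[\tilde\Gamma_s^t(|m_s|+|n_s|)]\,ds$ converges despite the quadratic growth of $m,n$, the convergence being secured by balancing this decay against the forward bound above. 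The outcome is the desired growth estimate $|Y_s^T|\le C(1+|X_s^{t,\xi,u}|^2)$, uniform in $T$ and $s$; here (B5), i.e.\ $\bar\eta=\eta-(1+\bar\sigma^2)\alpha_1\alpha_2>0$, is used to dominate the cross terms coupling the $Z$-variable (through $\alpha_2$) with the local diffusion Lipschitz constant $\alpha_1$, ensuring the net decay rate stays positive.

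With these estimates in hand, convergence follows by a Cauchy argument. For $m>n$, the two processes $Y^m$ and $Y^n$ both solve \eqref{App1m} on $[t,n]$ with the same generator but terminal data $Y_n^m$ and $Y_n^n=0$; their difference solves a linear $G$-BSDE, and the same linearization plus the exponential decay of $\tilde\Gamma$ gives $|Y_s^m-Y_s^n|\le Ce^{-\mu(n-s)}\mathbb{\hat{E}}_s[1+|X_n^{t,\xi,u}|^2]$, which tends to $0$ uniformly on compact time intervals by the forward dissipative bound. Hence $(Y^n)$ is Cauchy; extracting the limit together with the companion estimates for $(Z^n,K^n)$ on each finite interval produces a triple $(Y,Z,K)$ lying in $\mathfrak{S}_G^2(0,T)$ for every $T$, i.e.\ in $\mathfrak{S}_G^2(0,\infty)$, that solves \eqref{App1m} and inherits the quadratic growth bound. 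Uniqueness is obtained by the same mechanism: if two solutions satisfy the growth bound, then on $[t,T]$ their difference is a linear $G$-BSDE whose solution is controlled by $Ce^{-\mu(T-s)}\mathbb{\hat{E}}_s[|Y_T^1-Y_T^2|]\le Ce^{-\mu(T-s)}\mathbb{\hat{E}}_s[1+|X_T^{t,\xi,u}|^2]$, and letting $T\to\infty$ forces $Y^1=Y^2$, after which the equation gives $Z^1=Z^2$ and $K^1=K^2$.

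The main obstacle I anticipate is the second paragraph: establishing the exponential estimates uniformly in $T$ inside the $G$-framework. Unlike the classical setting, there is no dominating measure, the extra decreasing $G$-martingale $K$ must be tracked, and the explicit linear representation is only available after passing to the extended $\tilde G$-expectation space of Lemma \ref{the5.2}; making the decay of $\tilde\Gamma$ quantitative and reconciling it with the merely quadratic (rather than bounded or linear) growth of the free terms $m_s,n_s$ --- so that the improper time integrals converge --- is the delicate point, and it is precisely here that the interplay of (B3), (B4) and (B5) is needed.
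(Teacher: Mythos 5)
Your construction follows the same architecture as the paper's appendix proof: finite-horizon solutions $(Y^{n},Z^{n},K^{n})$ on $[t,n]$ with zero terminal data, linearization of the generator (Lemma 3.5 of \cite{HW}), the explicit representation of Lemma \ref{the5.2} in the extended $\tilde{G}$-expectation space, a quadratic-growth bound uniform in $n$, a Cauchy argument on $[t,T]$ for every $T$ (the paper obtains the $Z$-convergence from Proposition 3.8 of \cite{HJPS}), and uniqueness by the same linearization mechanism. So the skeleton is the right one.

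The genuine gap --- which you yourself flag as ``the main obstacle I anticipate'' without resolving it --- is the mechanism behind the uniform estimate. The forward bound you actually state, $\mathbb{\hat{E}}_{t}[|X_{s}^{t,\xi,u}|^{2}]\le e^{-2\eta(s-t)}|X_{t}|^{2}+C$, is an \emph{unweighted} bound, and it cannot be ``balanced'' against the decay of $\tilde{\Gamma}^{t}_{s}$: after applying Lemma \ref{the5.2}, the quantity that must be controlled is $\mathbb{\hat{E}}^{\tilde{G}}_{s}\bigl[(1+|X_{r}|^{2})\Gamma_{r}\bigr]$, a product sitting under a single sublinear expectation, and sublinear expectations do not split products. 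H\"older's inequality does not repair this under (B1)--(B5): by Lemma \ref{HW212}, $\mathbb{\hat{E}}^{\tilde{G}}_{t}[|\Gamma_{r}|^{q}]^{1/q}\le\exp(C_{G}(q-1)\alpha_{2}^{2}(r-t))$, so keeping this growth below the fixed rate $\mu$ caps $q$, which forces the conjugate exponent $p=q/(q-1)$ to be large, and one would then need $\sup_{r\ge t}\mathbb{\hat{E}}[|X_{r}|^{2p}]<\infty$ --- a $2p$-th order dissipativity that the order-two conditions (B4)--(B5) need not yield (Lemma \ref{myq.1}(ii) only gives such moments with $T$-dependent constants). What closes the argument, and what the paper invokes at exactly this point, is the \emph{weighted} estimate of Lemma \ref{HW2}(i): $\mathbb{\hat{E}}^{\tilde{G}}_{t}[|X^{t,\xi,u}_{s}|^{2}\Gamma^{t}_{s}]\le C_{\eta}(1+|\xi|^{2})$ with $C_{\eta}$ independent of $s$. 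Its proof is a single $G$-It\^o computation on the product $|X_{s}|^{2}\Gamma^{t}_{s}$, in which the cross terms of size $(1+\bar{\sigma}^{2})\alpha_{1}\alpha_{2}|X_{s}|^{2}\Gamma^{t}_{s}$ (from the covariation of $X$ with the stochastic exponential) are absorbed by the $-2\eta|X_{s}|^{2}\Gamma^{t}_{s}$ supplied by (B4); this is precisely where (B5), $\bar{\eta}=\eta-(1+\bar{\sigma}^{2})\alpha_{1}\alpha_{2}>0$, enters. Your remark that (B5) ``dominates the cross terms coupling the $Z$-variable with $\alpha_{1}$'' points at this computation, but the proof as written substitutes the unweighted bound for it; with that substitution neither the uniform bound $|Y^{n}_{s}|\le C(1+|X_{s}|^{2})$ nor your Cauchy estimate $|Y^{m}_{s}-Y^{n}_{s}|\le Ce^{-\mu(n-s)}\mathbb{\hat{E}}_{s}[1+|X_{n}|^{2}]$ (which hides the same product problem) can actually be derived.
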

\begin{proof}
The proof will be given in the appendix.
\end{proof}

The aim of our stochastic optimal control problem
is to find some $u \in \mathcal{U}[0, \infty)$ so as to minimise the
objective function $Y_{0}^{x,u}$ for each $x\in\mathbb{R}^{n}$. For this purpose, we define the following value function:
\begin{equation}
V(x):=\underset{u\in\mathcal{U}[0,\infty)}{\inf}Y_{0}^{x,u}\text{ for any }%
x\in\mathbb{R}^{n}. \label{valuefunction01}%
\end{equation}

In order to study the stochastic control problem, we need to define the
essential infimum of $\{Y_{t}^{t,\xi,u}\mid u\in\mathcal{U}[t,\infty)\}.$
\begin{definition}
\label{esssup}For each $\xi\in L^{p}_G(\Omega_t)$ with $p>2$, the essential infimum of $\{Y_{t}^{t,\xi,u}\mid u\in
\mathcal{U}[t,\infty)\}$, denoted by $\underset{u\in\mathcal{U}%
[t,\infty)}{\text{ess}\inf}Y_{t}^{t,\xi,u}$, is a random variable $\zeta\in L_{G}%
^{2}(\Omega_{t})$ satisfying:
\begin{description}
\item[(i)] $\forall u\in\mathcal{U}[t,\infty),$ $\zeta\leq Y_{t}^{t,\xi,u}$ $\ $q.s.$;$

\item[(ii)] if $\eta$ is a random variable satisfying $\eta\leq Y_{t}^{t,\xi,u}$
$\ $q.s. for any $u\in\mathcal{U}[t,\infty)$, then $\zeta\geq\eta$ $\ $q.s.\end{description}
\end{definition}

Then for each $x\in\mathbb{R}^{n}$, we define the following function:
\begin{equation}
V(t,x):=\underset{u\in\mathcal{U}[t,\infty)}{ess\inf}Y_{t}^{t,x,u}\ \text{ for each}\
(t,x)\in[0,\infty)\times\mathbb{R}^{n}. \label{valuefunction0}%
\end{equation}
It is obvious that $V(x)=V(0,x)$.
\begin{remark}{\upshape
At this stage, we cannot even conclude that  $V(t,x)$ exists (see Example 11 in \cite{HJ1}), which is different from the linear  case.
}
\end{remark}
\section{Regularity of the value function}
In this section, we shall study the regularities of the  value function $V$.  In particular, we will prove that $V(t,x)$ is  a deterministic continuous function
independent of the time variable $t$. From now on, if not specified, we always assume (B1)-(B5) hold.

Now recall some notations, which are essentially from \cite{HJ1}:%
\begin{itemize}
\item $L_{ip}(\Omega_{s}^{t})    :=\{ \varphi(B_{t_{1}}-B_{t},...,B_{t_{n}}%
-B_{t}):n\geq1,t_{1},...,t_{n}\in\lbrack t,s],\varphi\in C_{b.Lip}%
(\mathbb{R}^{d\times n})\}$;
\item $L_{G}^{2}(\Omega_{s}^{t})    :=\{ \text{the completion of }L_{ip}(\Omega
_{s}^{t})\text{ under the norm }\Vert\cdot\Vert_{L^2_G}\}$;
\item $M_{G}^{0,t}(t,T)   :=\{ \eta_{s}=\sum_{i=0}^{N-1}\xi_{i}\mathbf{1}_{[t_{i},t_{i+1}%
)}(s):t=t_{0}<\cdots<t_{N}=T,\xi_{i}\in L_{ip}(\Omega_{t_{i}}^{t})\}$;
\item $M_{G}^{2,t}(t,T)   :=\{ \text{the completion of }M_{G}^{0,t}(t,T)\text{
under the norm }\Vert\cdot\Vert_{M_{G}^{2}}\}$;
\item $\mathcal{U}^{t}[t,T]    :=\{u:u\in M_{G}^{2,t}(t,T;\mathbb{R}^{m})\text{ taking
values in }U\}$;
\item $\mathbb{U}[t,T]   :=\{u=\sum\limits_{i=1}^{n}\mathbf{1}_{A_{i}}u^{i}:n\in
\mathbb{N},u^{i}\in\mathcal{U}^{t}[t,T],\mathbf{1}_{A_{i}}\in L_{G}^{2}(\Omega
_{t}),\Omega=%
{\displaystyle\bigcup\limits_{i=1}^{n}}
A_{i}\};$
\item $\mathcal{U}^{t}[t,\infty):=\underset{T>t}{\cap} \mathcal{U}^{t}[t,T]$, $\mathbb{U}[t,\infty):=\underset{T>t}{\cap} \mathbb{U}[t,T]$.
\end{itemize}
\begin{remark}\label{myq6}{
\upshape
Since $U$ is bounded, it is easy to check that  $\eta\in\mathcal{U}[t,\infty)$ belongs to the space $M^p_G(t,T)$ for each $T>t$ and $p\geq 2$.
}
\end{remark}

In order to state the main results of this section,  we shall give some useful estimates in the sequel.
For this purpose, we need to construct an auxiliary extended
$\tilde{G}$-expectation space $(\tilde{\Omega},L_{\tilde{G}}^{1}%
(\tilde{\Omega}),\mathbb{\hat{E}}^{\tilde{G}})$ with $\tilde{\Omega}=C_{0}([0,\infty),\mathbb{R}^{2d})$, where $\tilde{G}$ is given by equation \eqref{yw7}.
Let $(B_{t},\tilde{B}_{t})_{t\geq 0}$ be the corresponding canonical process.

\begin{lemma}\label{HW212}For some given $t\geq 0$, suppose $\Gamma^t$  is the solution of the following $\tilde{G}$-SDE :%
\begin{equation*}
\Gamma^t_{s}=1+\int_{t}^{s}\beta^{1,i}_{r}\Gamma^t_{r}dB^i_{r}+\int_{t}^{s}\beta^{2,i}_{r}\Gamma^t_{r}d\tilde{B}^i_{r}, \ \ s\geq t,
\end{equation*}
 where  $(\beta^{1,i}_{s})_{s\in\lbrack0,\infty)}$,  $(\beta^{2,i}_{s})_{s\in\lbrack0,\infty)} \in M_{G}^{2}(0,\infty)$  are  bounded by $\alpha_2$. Then there is a constant $C_G$ depending only on $G$, such that for each $p\geq 1$,
\[
\mathbb{\hat{E}}^{\tilde{G}}_t[|\Gamma^t_{s}|^p]\leq \exp(C_G(p^2-p)\alpha^2_2(s-t)), \ \ \forall s\geq t\geq 0.
\]
\end{lemma}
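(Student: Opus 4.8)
The plan is to apply the $G$-It\^{o} formula of Theorem \ref{Thm6.5} to a suitable power of $\Gamma^t$, take the conditional $\tilde{G}$-expectation to annihilate the martingale part, control the remaining second-order term by exploiting the explicit block form \eqref{yw7} of $\tilde{G}$, and close the estimate by Gronwall's inequality. It is convenient to regard $W:=(B,\tilde{B})$ as the $2d$-dimensional $\tilde{G}$-Brownian motion on the extended space and rewrite the SDE as $d\Gamma^t_s=\gamma^k_s\Gamma^t_s\,dW^k_s$, where $(\gamma^1,\dots,\gamma^{2d})=(\beta^{1,1},\dots,\beta^{1,d},\beta^{2,1},\dots,\beta^{2,d})$ is componentwise bounded by $\alpha_2$. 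For $p\geq 2$ the map $\Phi(x)=|x|^p$ is $C^2$ with $\Phi''(x)=p(p-1)|x|^{p-2}$ of polynomial growth, so Theorem \ref{Thm6.5} yields
\[
|\Gamma^t_s|^p=1+\int_t^s p|\Gamma^t_r|^{p}\gamma^k_r\,dW^k_r+\tfrac12\int_t^s p(p-1)|\Gamma^t_r|^{p}\gamma^k_r\gamma^l_r\,d\langle W^k,W^l\rangle_r .
\]
Taking $\mathbb{\hat{E}}^{\tilde{G}}_t[\cdot]$, the $G$-It\^{o} integral has vanishing conditional expectation, so only the $d\langle W^k,W^l\rangle$ term survives.

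The heart of the argument is to bound this second-order term. Setting $A_r:=|\Gamma^t_r|^{p}\gamma_r\gamma_r^{\top}\in\mathbb{S}(2d)$, the process $\int_t^{\cdot}A_r^{kl}\,d\langle W^k,W^l\rangle_r-\int_t^{\cdot}2\tilde{G}(A_r)\,dr$ is a decreasing $\tilde{G}$-martingale (cf. Remark \ref{yw1}), hence has nonpositive conditional $\tilde{G}$-expectation; it therefore suffices to estimate $\tilde{G}(A_r)=|\Gamma^t_r|^{p}\tilde{G}(\gamma_r\gamma_r^{\top})$. Splitting $\gamma_r=(\beta^1_r,\beta^2_r)$ with $\beta^1_r,\beta^2_r\in\mathbb{R}^d$, the expression \eqref{yw7} gives
\[
2\tilde{G}(\gamma_r\gamma_r^{\top})=\sup_{Q\in\Gamma}\big[(\beta^1_r)^{\top}Q\beta^1_r+2(\beta^1_r)^{\top}\beta^2_r+(\beta^2_r)^{\top}Q^{-1}\beta^2_r\big],
\]
and the non-degeneracy of $G$ (which forces $\underline{\sigma}^2 I_d\leq Q\leq\bar{\sigma}^2 I_d$ for $Q\in\Gamma$) together with $|\beta^1_r|^2+|\beta^2_r|^2\leq 2d\alpha_2^2$ yields $\tilde{G}(\gamma_r\gamma_r^{\top})\leq C_G\alpha_2^2$ with $C_G$ depending only on $G$. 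Combining this with the sublinear Fubini inequality $\mathbb{\hat{E}}^{\tilde{G}}_t[\int_t^s X_r\,dr]\leq\int_t^s\mathbb{\hat{E}}^{\tilde{G}}_t[X_r]\,dr$ gives, for $\psi(s):=\mathbb{\hat{E}}^{\tilde{G}}_t[|\Gamma^t_s|^p]$, the integral inequality $\psi(s)\leq 1+C_G(p^2-p)\alpha_2^2\int_t^s\psi(r)\,dr$, whence Gronwall's lemma delivers $\psi(s)\leq\exp(C_G(p^2-p)\alpha_2^2(s-t))$.

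For $1\leq p<2$ the function $|x|^p$ fails to be $C^2$ at the origin, so I would instead apply the It\^{o} formula to the regularisation $\Phi_\varepsilon(x)=(x^2+\varepsilon)^{p/2}$, which is smooth with derivatives of polynomial growth and satisfies $\Phi_\varepsilon''(x)x^2\leq p(p-1)\Phi_\varepsilon(x)+p\varepsilon^{p/2}$. Running the identical computation produces $\psi_\varepsilon(s)\leq[(1+\varepsilon)^{p/2}+C_G p\varepsilon^{p/2}\alpha_2^2(s-t)]\exp(C_G(p^2-p)\alpha_2^2(s-t))$, and since $\Phi_\varepsilon(\Gamma^t_s)\geq|\Gamma^t_s|^p$, letting $\varepsilon\downarrow 0$ recovers the stated bound for all $p\geq 1$. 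The a priori finiteness of $\psi$ and $\psi_\varepsilon$ needed to invoke Gronwall follows from the standard $L^p$-moment estimates for $\tilde{G}$-SDEs with bounded coefficients. I expect the main obstacle to be the sharp control of the second-order term, namely verifying $\tilde{G}(\gamma_r\gamma_r^{\top})\leq C_G\alpha_2^2$ with a constant independent of the coefficients and of $p$: this is precisely where the coupled block structure of $\tilde{G}$ recorded in \eqref{yw7} must be exploited, and the treatment of $1\leq p<2$ by regularisation is the secondary point requiring care.
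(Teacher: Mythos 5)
Your proof is correct, and it takes a genuinely different route from the paper's. The paper exploits the linearity of the SDE: by Proposition 1.3 of Chap.~IV of \cite{P10}, $\Gamma^t_s$ is an explicit Dol\'eans--Dade exponential, so $|\Gamma^t_s|^p$ factors as $\hat{\Gamma}^t_s E_s$, where $\hat{\Gamma}^t$ is the exponential $\tilde{G}$-martingale built from the coefficients $(p\beta^1,p\beta^2)$ and $E_s$ is the correction $\exp(\frac{p^2-p}{2}\int_t^s|\beta^1_r|^2d\langle B\rangle_r+\frac{p^2-p}{2}\int_t^s|\beta^2_r|^2d\langle\tilde{B}\rangle_r+(p^2-p)\int_t^s\beta^1_r\beta^2_rdr)$; since $d\langle B\rangle_r\leq\bar{\sigma}^2dr$ and $d\langle\tilde{B}\rangle_r\leq\underline{\sigma}^{-2}dr$ in the extended space, $E_s\leq\exp(C_G(p^2-p)\alpha_2^2(s-t))$ holds pathwise, and $\mathbb{\hat{E}}^{\tilde{G}}_t[\hat{\Gamma}^t_s]=1$ finishes the proof in two lines --- no It\^o formula applied to $|x|^p$, no Gronwall, no regularisation, and no case distinction in $p$, because positivity of the exponential makes the non-smoothness of $|x|^p$ at the origin irrelevant. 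Your scheme (It\^o on $|x|^p$, the decreasing-$\tilde{G}$-martingale bound on the bracket term, Gronwall) rests on exactly the same structural input --- the block form \eqref{yw7} forces $\underline{\sigma}^2 I_d\leq Q\leq\bar{\sigma}^2 I_d$ and hence $Q^{-1}\leq\underline{\sigma}^{-2}I_d$, so $\tilde{G}(\gamma_r\gamma_r^{\top})\leq C_G\alpha_2^2$ --- but it never uses the closed-form solution, so it would survive perturbations that destroy linearity (an added drift, non-multiplicative noise), and it treats general $d$ directly where the paper reduces to $d=1$. What it costs: (a) you must know that $|\Gamma^t|^p\gamma^k\in M_G^2$ in order to annihilate the stochastic integral under $\mathbb{\hat{E}}^{\tilde{G}}_t$, and your appeal to a priori moment estimates is the standard fix; (b) $\psi(s)=\mathbb{\hat{E}}^{\tilde{G}}_t[|\Gamma^t_s|^p]$ is an $\mathcal{F}_t$-measurable random variable rather than a deterministic function, so Gronwall has to be run pathwise q.s., which requires the integral inequality to hold for all $s$ simultaneously (countable dense set of times plus $L^1_G$-continuity of $\psi$); this technicality can be bypassed entirely, in the spirit of the paper's own proof of Lemma \ref{HW2}, by applying It\^o to $\exp(-C_G(p^2-p)\alpha_2^2(s-t))|x|^p$ so that the bracket term is absorbed before any expectation is taken; (c) the regularisation for $1\leq p<2$, whose key inequality $\Phi_\varepsilon''(x)x^2\leq p(p-1)\Phi_\varepsilon(x)+p\varepsilon^{p/2}$ does check out.
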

\begin{proof}
To simplify presentation, we shall prove only the case that $d=1$, as
other cases can be proved in the same way.
It follows from Proposition 1.3 of Chap. IV  in \cite{P10} that
\[
\Gamma^t_{s}=\exp(\int_{t}^{s}\beta^1_{r}dB_{r}-\frac{1}{2}\int%
_{t}^{s}|\beta^1_{r}|^2d\langle B\rangle_{r}+\int_{t}^{s}\beta^2_{r}d\tilde{B}_{r}-\frac{1}{2}\int_{t}^{s}|\beta^2_{r}|^{2}%
d\langle\tilde{B}\rangle_{r}-\int_{t}^{s}\beta^1_{r}\beta^2_{r}dr).
\]
Thus we conclude that $\hat{\Gamma}^t_s=:\exp(\int_{t}^{s}p\beta^1_{r}dB_{r}-\frac{1}{2}\int%
_{t}^{s}|p\beta^1_{r}|^{2}d\langle B\rangle_{r}+\int_{t}^{s}p\beta^2_{r}d\tilde{B}_{r}-\frac{1}{2}\int_{t}^{s}|p\beta^2_{r}|^{2}%
d\langle\tilde{B}\rangle_{r}-\int_{t}^{s}p^2\beta^1_{r}\beta^2_{r}dr)$ is a $G$-martingale. Then we obtain that
\begin{align*}
\mathbb{\hat{E}}^{\tilde{G}}_t[|\Gamma^t_{s}|^p]\leq\exp(C_G(p^2-p)\alpha^2_2(s-t))\mathbb{\hat{E}}^{\tilde{G}}_t[\hat{\Gamma}_s^t]=\exp(C_G(p^2-p)\alpha^2_2(s-t)),
\end{align*}
where $C_G:=1+\frac{1}{2}(\bar{\sigma}^2+\frac{1}{\underline{\sigma}^2}).$ The proof is complete.
\end{proof}
\begin{lemma}\label{HW2}
Let $\xi,\xi^{\prime}\in L_{G}^{p}(\Omega
_{t};\mathbb{R}^{n})$ with $p>2$ and $u,u^{\prime}\in\mathcal{U}[t,\infty)$. Then there exists a constant ${C}_{\eta}$ depending on $G,\alpha_1,\alpha_2, L,U$ and $\eta$, such that for each $s\geq t$ q.s.
\begin{description}
\item[(i)] $\mathbb{\hat{E}}^{\tilde{G}}_t[|X^{t,\xi,u}_s|^2\Gamma^t_s]\leq C_{\eta}(1+|\xi|^2);$
\item[(ii)] $\mathbb{\hat{E}}^{\tilde{G}}_t[|X^{t,\xi,u}_s-X^{t,\xi^{\prime},u}_s|^2\Gamma^t_s]\leq\exp(-2\bar{\eta} (s-t))|\xi-\xi^{\prime}|^2;$
\item[(iii)]
$
|X^{t,\xi,u}_s-X^{t,\xi,u^{\prime}}_s|^2\Gamma^t_s\leq \exp(\bar{\eta}(t-s))M_s+{C}_{\eta}\int^s_t\exp(\bar{\eta}(r-s))|u_r-u^{\prime}_r|^2\Gamma^t_rdr,
$
where $M$ is a symmetric $\tilde{G}$-martingale. In particular,
\[\mathbb{\hat{E}}^{\tilde{G}}_t[|X^{t,\xi,u}_s-X^{t,\xi,u^{\prime}}_s|^2\Gamma^t_s]\leq {C}_{\eta}\mathbb{\hat{E}}^{\tilde{G}}_t[\int^s_t\exp(\bar{\eta}(r-s))| u_r-u^{\prime}_r|^2 \Gamma^t_rdr], \ \forall s>t.
\]
\end{description}
\end{lemma}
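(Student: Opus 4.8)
The plan is to treat all three estimates with a single device: apply the $\tilde{G}$-It\^o formula (Theorem \ref{Thm6.5}) in the extended space to the product of a squared norm with the weight $\Gamma^t$, after inserting a suitable deterministic exponential factor. Write $\hat{X}_s=X^{t,\xi,u}_s-X^{t,\xi',u}_s$ for (ii), $\hat{X}_s=X^{t,\xi,u}_s-X^{t,\xi,u'}_s$ for (iii), and $\hat{X}_s=X^{t,\xi,u}_s$ for (i). Subtracting the relevant copies of \eqref{App1y} and applying It\^o to $|\hat{X}_s|^2$, the coefficient of $d\langle B^i,B^j\rangle_s$ is precisely the matrix
\[
A^{ij}_s=\big((\Delta\sigma_s)^{\top}\Delta\sigma_s\big)_{ij}+2\langle \hat{X}_s,\Delta h_{ij,s}\rangle ,
\]
which is symmetric by (B1), while the $ds$-drift is $2\langle \hat{X}_s,\Delta b_s\rangle$; here $\Delta b,\Delta\sigma,\Delta h$ denote the corresponding increments of the coefficients. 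I would then form $d(|\hat{X}_s|^2\Gamma^t_s)$ by the product rule and split it into a symmetric $\tilde{G}$-martingale (all $dB^i$ and $d\tilde{B}^i$ integrals) and a finite-variation remainder carried by $ds$, $d\langle B^i,B^j\rangle_s$, and the cross term $d\langle B^i,\tilde{B}^i\rangle_s=ds$ coming from the off-diagonal block $I_d$ in \eqref{yw7}.

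The heart of the argument is the bound on this remainder. Since $\Gamma^t_s\ge 0$ and $G$ is positively homogeneous, sublinearity of $G$ lets me dominate $\int \Gamma^t_s A^{ij}_s\,d\langle B^i,B^j\rangle_s$ by $\int 2\Gamma^t_s G(A_s)\,ds$ up to a decreasing $G$-martingale (Remark \ref{yw1}); combined with the drift, assumption (B4) gives $2\Gamma^t_s[G(A_s)+\langle \hat{X}_s,\Delta b_s\rangle]\le -2\eta|\hat{X}_s|^2\Gamma^t_s$. The cross-variation term between $\hat{X}$ and $\Gamma^t$ pairs $\Delta\sigma_s$ with $\beta^{1}_s$ (against $d\langle B,B\rangle$, producing a $\bar{\sigma}^2$ factor through $G$) and with $\beta^2_s$ (against $d\langle B,\tilde{B}\rangle=ds$, with no $\bar{\sigma}^2$); using $|\Delta\sigma_s|\le\alpha_1|\hat{X}_s|$ and $|\beta^{1}_s|,|\beta^{2}_s|\le\alpha_2$ this contributes at most $2(1+\bar{\sigma}^2)\alpha_1\alpha_2|\hat{X}_s|^2\Gamma^t_s$. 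Adding up, the finite-variation rate is at most $-2\bar{\eta}|\hat{X}_s|^2\Gamma^t_s$, with $\bar{\eta}>0$ by (B5). This is the step I expect to be the main obstacle: casting the mixed $d\langle B,B\rangle$/$d\langle B,\tilde{B}\rangle$ coefficient into the form $G(\cdot)$ with exactly the constant $(1+\bar{\sigma}^2)\alpha_1\alpha_2$, so that (B5) applies.

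Given this rate, (ii) is immediate: there is no inhomogeneity (same control), so multiplying by $\exp(2\bar{\eta}(s-t))$ renders the finite-variation part nonincreasing, and taking $\mathbb{\hat{E}}^{\tilde{G}}_t$ annihilates the symmetric martingale and discards the nonpositive decreasing-$G$-martingale, yielding $\mathbb{\hat{E}}^{\tilde{G}}_t[|\hat{X}_s|^2\Gamma^t_s]\le \exp(-2\bar{\eta}(s-t))|\xi-\xi'|^2$. For (i) I would run the same computation for $X^{t,\xi,u}$, comparing the coefficients to the frozen point $x'=0$: (B4) with $x'=0$ supplies the dissipation, while the bounded quantities $b(0,u),\sigma(0,u),h_{ij}(0,u)$ (continuous on the compact $U$) produce only $O(1+|X|)$ terms absorbed by Young's inequality. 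Since $\Gamma^t$ is a symmetric $\tilde{G}$-martingale with $\mathbb{\hat{E}}^{\tilde{G}}_t[\Gamma^t_r]\le 1$ (Lemma \ref{HW212} with $p=1$), integrating the resulting inequality gives $\mathbb{\hat{E}}^{\tilde{G}}_t[|X_s|^2\Gamma^t_s]\le \exp(-\bar{\eta}(s-t))|\xi|^2+C_{\eta}$, hence the uniform bound $C_{\eta}(1+|\xi|^2)$.

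Finally, for (iii) I would split each increment, e.g. $\Delta b_s=[b(X^{t,\xi,u}_s,u_s)-b(X^{t,\xi,u'}_s,u_s)]+[b(X^{t,\xi,u'}_s,u_s)-b(X^{t,\xi,u'}_s,u'_s)]$, and likewise for $\sigma,h$; the first bracket feeds the dissipative estimate above and the second is controlled by $L|u_s-u'_s|$ via (B2). A Young inequality converts the cross terms $|\hat{X}_s|\,|u_s-u'_s|$ into $\tfrac{\bar{\eta}}{2}|\hat{X}_s|^2+C_{\eta}|u_s-u'_s|^2$, so that with weight $\exp(\bar{\eta}(s-t))$ the drift obeys $d\big(\exp(\bar{\eta}(s-t))|\hat{X}_s|^2\Gamma^t_s\big)\le dM_s+C_{\eta}\exp(\bar{\eta}(s-t))|u_s-u'_s|^2\Gamma^t_s\,ds$, where $M$ is the genuinely symmetric $\tilde{G}$-martingale built from the $dB,d\tilde{B}$ integrals (the nonpositive decreasing-$G$-martingale excess of $\int A^{ij}\,d\langle B^i,B^j\rangle$ over $\int 2G(A)\,ds$ being dropped by its sign, and $\hat{X}_t=0$). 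Integrating from $t$ to $s$ and rearranging yields the pathwise bound of (iii); taking $\mathbb{\hat{E}}^{\tilde{G}}_t$, which annihilates $M$ since it starts at $0$ and is symmetric, gives the ``in particular'' statement.
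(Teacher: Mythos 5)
Your proposal is correct and takes essentially the same route as the paper's proof: the $G$-It\^{o} formula applied to $\exp(\bar{\eta}(s-t))|\hat{X}_s|^2\Gamma^t_s$, the decreasing-$G$-martingale bound for the $d\langle B\rangle$ coefficients, assumption (B4) for the dissipation, the cross-variation with $\Gamma^t$ costing exactly $2(1+\bar{\sigma}^2)\alpha_1\alpha_2|\hat{X}_s|^2\Gamma^t_s$ so that (B5) closes the budget, and (B2) plus Young's inequality for the control-difference terms, with the same net cancellation of the $|\hat{X}_s|^2$ coefficient. The only step you assert rather than verify is that the $dB$, $d\tilde{B}$ integrals form a true symmetric $\tilde{G}$-martingale; the paper checks the required integrability via H\"{o}lder's inequality together with Lemmas \ref{myq.1} (i) and \ref{HW212}, which is routine.
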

\begin{proof}Without loss of generality, assume $d=1$.
By a similar analysis as in the proof of Lemma 4.1 in \cite{HW}, it is easy to check that (i) and (ii) hold.
Next we shall prove the property (iii). For convenience, we omit superscripts $t$ and $\xi$.

Set
$C_{s}:=\exp(\bar{\eta}(s-t))$.
Applying the $G$-It\^{o} formula \ref{Thm6.5} yields that
\begin{align*}
&C_s|X^{u}_s-X^{u^{\prime}}_s|^2\Gamma^t_s
\\&=\bar{\eta}\int^s_tC_r|X^{u}_r-X^{u^{\prime}}_r|^2\Gamma^t_rdr+2\int^s_tC_r \langle X^{u}_r-X^{u^{\prime}}_r,\bar{b}_r\rangle\Gamma^t_rdr
+\int^s_tC_r\xi_r d\langle B
\rangle_r \\&\ \ \ \  +M_s
+2\int^s_tC_r\langle X^{u}_r-X^{u^{\prime}}_r,\bar{\sigma}_r\rangle \beta^1_r\Gamma^t_rd\langle B\rangle_r +2\int^s_tC_r\langle X^{u}_r-X^{u^{\prime}}_r,\bar{\sigma}_r\rangle \beta^2_r\Gamma^t_rdr,
\end{align*}
where $\bar{\varphi}_s=\varphi(X^{u}_s,u_s)-\varphi(X^{u^{\prime}}_s,u^{\prime}_s)$ for $\varphi=b,h,\sigma$,
$\xi_s=[2\langle X^{u}_s-X^{u^{\prime}}_s,\bar{h}_s\rangle+|\bar{\sigma}_s|^2]\Gamma^t_s$
and \[M_s=2\int^s_tC_r(2\langle X^{u}_r-X^{u^{\prime}}_r,\bar{\sigma}_r\rangle+|X^{u}_r-X^{u^{\prime}}_r|^2\beta^1_r)\Gamma^t_rd B_r
 +\int^s_tC_r|X^{u}_r-X^{u^{\prime}}_r|^2\beta^2_r\Gamma^t_rd\tilde{B}_r.\]

Denote
$\bar{\varphi}^{\prime}_s=\varphi(X^{u}_s,u_s)-\varphi(X^{u^{\prime}}_s,u_s)$ for $\varphi=b,h,\sigma$ and
$\xi^{\prime}_s=2[\langle X^{u}_s-X^{u^{\prime}}_s,\bar{h}^{\prime}_s\rangle+|\bar{\sigma}^{\prime}_s|^2]\Gamma^t_s$.
Note that $\int_{t}^{s}\xi_{r}d\langle B\rangle_{r}-2\int_{t}^{s}G(\xi
_{r})dr\leq0$ from Remark \ref{yw1}. Then we have
\begin{align*}
C_s|X^{u}_s-X^{u^{\prime}}_s|^2\Gamma^t_s\leq \bar{\eta}\int^s_tC_r|X^{x}_r-X^{u^{\prime}}_r|^2\Gamma^t_rdr+M_s+\Pi^1_s+\Pi^2_s,
\end{align*}
where \[\Pi^1_s=2\int^s_tC_r \langle X^{u}_r-X^{u^{\prime}}_r,\bar{b}^{\prime}_r\rangle\Gamma^t_rdr
+2\int^s_tC_rG(\xi^{\prime}_r) dr\]
and \begin{align*} \Pi^2_s=&2\int^s_tC_r \langle X^{u}_r-X^{u^{\prime}}_r,\bar{b}_r-\bar{b}^{\prime}_r\rangle\Gamma^t_rdr
+2\int^s_tC_rG(\xi_r-\xi^{\prime}_r) dr \\&
+2\int^s_tC_r\langle X^{u}_r-X^{u^{\prime}}_r,\bar{\sigma}^{\prime}_r\rangle \beta^1_r\Gamma^t_rd\langle B\rangle_r +
2\int^s_tC_r\langle X^{u}_r-X^{u^{\prime}}_r,\bar{\sigma}^{\prime}_r\rangle \beta^2_r\Gamma^t_rdr\\&
+2\int^s_tC_r\langle X^{u}_r-X^{u^{\prime}}_r,\bar{\sigma}_r-\bar{\sigma}^{\prime}_r\rangle \beta^1_r\Gamma^t_rd\langle B\rangle_r +2\int^s_tC_r\langle X^{u}_r-X^{u^{\prime}}_r,\bar{\sigma}_r-\bar{\sigma}^{\prime}_r\rangle \beta^2_r\Gamma^t_rdr.\end{align*}

Recalling assumption (B4), we obtain that $\Pi^1_s \leq -2\eta\int^s_tC_r|X^{u}_r-X^{u^{\prime}}_r|^2\Gamma^t_rdr.$ From assumption (B2), we have
\begin{align*} &\Pi^2_s
\leq 2(1+\bar{\sigma}^2+\alpha_1\bar{\sigma}^2)L\int^s_tC_r|X^{u}_r-X^{u^{\prime}}_r||u_r-u^{\prime}_r|\Gamma^t_rdr+2\bar{\sigma}^2L^2\int^s_tC_r|u_r-u^{\prime}_r|^2\Gamma^t_rdr
\\ &
+2(1+\bar{\sigma}^2)\alpha_1\alpha_2\int^s_tC_r|X^{u}_r-X^{u^{\prime}}_r|^2\Gamma^t_rdr
+2(1+\bar{\sigma}^2)\alpha_2L\int^s_tC_r|X^{u}_r-X^{u^{\prime}}_r||u_r-u^{\prime}_r|\Gamma^t_rdr.
\end{align*}
Note that
\[
2(1+\bar{\sigma}^2)(1+\alpha_1+\alpha_2)L|X^{u}_s-X^{u^{\prime}}_s||u_s-u^{\prime}_s|\leq \bar{\eta}|X^{u}_s-X^{u^{\prime}}_s|^2+C_{1}|u_s-u^{\prime}_s|^2,
\]
where $C_{1}:=(1+\bar{\sigma}^2)^2(1+\alpha_1+\alpha_2)^2L^2/\bar{\eta}$.

Then by the definition of $\bar{\eta}$, we conclude that
\begin{align}\label{myq3}
&|X^{u}_s-X^{u^{\prime}}_s|^2\Gamma^t_s\leq (C_s)^{-1}M_s+(C_{1}+2\bar{\sigma}^2L^2)(C_s)^{-1}\int^s_tC_r|u_r-u^{\prime}_r|^2\Gamma^t_rdr.
\end{align}
On the other hand, applying H\"{o}lder's inequality, Lemmas \ref{myq.1} (i) and \ref{HW212} yields that
 $M_s$ is a symmetric $\tilde{G}$-martingale, i.e., $M_s$ and $-M_s$ are both $G$-martingales.
Consequently, taking expectation on both sides of equation \eqref{myq3}, we deduce that
\[
\mathbb{\hat{E}}^{\tilde{G}}_t[|X^{u}_s-X^{u^{\prime}}_s|^2\Gamma^t_s]\leq(C_{1}+2\bar{\sigma}^2L^2)\mathbb{\hat{E}}^{\tilde{G}}_t[\int^s_t\exp(\bar{\eta}(r-s))|u_r-u^{\prime}_r|^2\Gamma^t_rdr],
\]
which completes the proof.
\end{proof}

 Note that the constant ${C}_{\eta}$ is independent of $s$, which is
crucial	for our main results. We remark that the above results can be extended to more general case. Indeed, assume that $\tilde{b}, \tilde{h}$ and $\tilde{\sigma}$ only satisfy (B2). For some fixed $\bar{t}>0$, we define
$\bar{b}(s,x,u)=\tilde{b}(x,u)\mathbf{1}_{[0,\bar{t})}(s)+b(x,u)\mathbf{1}_{[\bar{t},\infty)}(s)$. Similarly, we can define $\bar{h}$ and $\bar{\sigma}$.
Let $(\bar{X},\bar{Y},\bar{Z},\bar{K})$ be the solution to $G$-FBSDE \eqref{App1y}-\eqref{App1m} with generators $(\tilde{b},\tilde{h},\tilde{\sigma},f,g)$.
 Then we have the following result.
\begin{lemma}\label{myq7}
Let $\xi,\xi^{\prime}\in L_{G}^{p}(\Omega
_{t};\mathbb{R}^{n})$ with $p>2$ and $u,u^{\prime}\in\mathcal{U}[t,\infty)$. Then there is a constant ${C}^{\prime}_{\eta}$ depending on $G,\alpha_1,\alpha_2, L, \bar{t},U$ and $\eta$ such that for each $s\geq t$ q.s.
\begin{description}
\item[(i)] $\mathbb{\hat{E}}^{\tilde{G}}_t[|\bar{X}^{t,\xi,u}_s|^2\Gamma^t_s]\leq {C}^{\prime}_{\eta}(1+|\xi|^2);$
\item[(ii)] $\mathbb{\hat{E}}^{\tilde{G}}_t[|\bar{X}^{t,\xi,u}_s-\bar{X}^{t,\xi^{\prime},u}_s|^2\Gamma^t_s]\leq {C}^{\prime}_{\eta}\exp(-2\bar{\eta} (s-t))|\xi-\xi^{\prime}|^2;$
\item[(iii)] $|\bar{X}^{t,\xi,u}_s-\bar{X}^{t,\xi,u^{\prime}}_s|^2\Gamma^t_s\leq \tilde{C}_sM_s+{C}^{\prime}_{\eta}\int^s_t\exp(\bar{\eta}(r-s))| u_r-u^{\prime}_r|^2 \Gamma^t_rdr,
$ where  $M$ is a symmetric $\tilde{G}$-martingale and $\tilde{C}_s$ is a deterministic process.
\end{description}
\end{lemma}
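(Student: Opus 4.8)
The plan is to exploit the fact that the coefficients coincide with the dissipative ones $b,h,\sigma$ on $[\bar t,\infty)$, so that the only genuinely new difficulty is the finite interval $[0,\bar t)$ on which $\tilde b,\tilde h,\tilde\sigma$ satisfy merely the Lipschitz bound (B2). Throughout I split the horizon at $\bar t$; if $t\geq\bar t$ then $\bar X^{t,\xi,u}=X^{t,\xi,u}$ and the three assertions are exactly Lemma \ref{HW2}, so I may assume $t<\bar t$. The two structural facts I will use repeatedly are the flow property $\bar X^{t,\xi,u}_s=X^{\bar t,\bar X^{t,\xi,u}_{\bar t},u}_s$ for $s\geq\bar t$ (valid because the coefficients on $[\bar t,\infty)$ are the good ones) and the multiplicative identity $\Gamma^t_s=\Gamma^t_{\bar t}\,\Gamma^{\bar t}_s$ for $s\geq\bar t\geq t$, which follows from the Dol\'eans form of the linear $\tilde G$-SDE defining $\Gamma$.

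For the finite interval I first record a purely Lipschitz estimate. Applying the $G$-It\^o formula (Theorem \ref{Thm6.5}) to $\exp(-\lambda(s-t))|\bar X^{t,\xi,u}_s|^2\Gamma^t_s$ with $\lambda>0$ chosen larger than the Lipschitz constants of $\tilde b,\tilde h,\tilde\sigma$, the growth of the non-dissipative drift is absorbed by the weight, so that on $[t,\bar t]$ one obtains, after dropping the nonpositive $d\langle B\rangle$-correction as in Remark \ref{yw1} and using Lemma \ref{HW212} to control moments of $\Gamma^t$, a bound $\mathbb{\hat E}^{\tilde G}_t[|\bar X^{t,\xi,u}_s|^2\Gamma^t_s]\leq C(1+|\xi|^2)$ with $C$ depending on $\bar t$; the analogous weighted computation applied to the difference of two solutions gives $\mathbb{\hat E}^{\tilde G}_t[|\bar X^{t,\xi,u}_s-\bar X^{t,\xi',u}_s|^2\Gamma^t_s]\leq C|\xi-\xi'|^2$ on $[t,\bar t]$. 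Since $s-t\leq\bar t$ there, the factor $\exp(-2\bar\eta(s-t))$ is bounded below by $\exp(-2\bar\eta\bar t)$, so these finite-horizon bounds already yield (i) and (ii) for $s\leq\bar t$ after absorbing $\exp(2\bar\eta\bar t)$ into $C'_\eta$.

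To pass to $s>\bar t$ I combine the finite-interval bounds with Lemma \ref{HW2} through the tower property. For (i) and (ii) this is clean: using $\Gamma^t_s=\Gamma^t_{\bar t}\Gamma^{\bar t}_s$ and the flow property, I take $\mathbb{\hat E}^{\tilde G}_{\bar t}$, pull out the nonnegative $\mathcal F_{\bar t}$-measurable factor $\Gamma^t_{\bar t}$, apply Lemma \ref{HW2}(i), resp. (ii), started from $\bar t$ with initial datum $\bar X^{t,\xi,u}_{\bar t}\in L^p_G(\Omega_{\bar t})$, and then apply $\mathbb{\hat E}^{\tilde G}_t$ and the tower property $\mathbb{\hat E}^{\tilde G}_t\mathbb{\hat E}^{\tilde G}_{\bar t}=\mathbb{\hat E}^{\tilde G}_t$. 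The remaining factor $\mathbb{\hat E}^{\tilde G}_t[\Gamma^t_{\bar t}(1+|\bar X^{t,\xi,u}_{\bar t}|^2)]$, resp. $\mathbb{\hat E}^{\tilde G}_t[\Gamma^t_{\bar t}|\bar X^{t,\xi,u}_{\bar t}-\bar X^{t,\xi',u}_{\bar t}|^2]$, is controlled by the finite-interval estimate of the previous paragraph (using also $\mathbb{\hat E}^{\tilde G}_t[\Gamma^t_{\bar t}]=1$), and the factor $\exp(-2\bar\eta(s-\bar t))=\exp(2\bar\eta(\bar t-t))\exp(-2\bar\eta(s-t))$ in (ii) is handled by $\exp(2\bar\eta(\bar t-t))\leq\exp(2\bar\eta\bar t)$.

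The main obstacle is (iii), because it is a pathwise inequality and because the two trajectories $\bar X^{t,\xi,u}$ and $\bar X^{t,\xi,u'}$ reach $\bar t$ at \emph{different} values $\bar X^{t,\xi,u}_{\bar t}\neq\bar X^{t,\xi,u'}_{\bar t}$. On $[\bar t,\infty)$ I therefore redo the $G$-It\^o computation of Lemma \ref{HW2} with the good coefficients but allowing a nonzero initial difference; this produces, in place of \eqref{myq3}, the pathwise bound $|\bar X^{t,\xi,u}_s-\bar X^{t,\xi,u'}_s|^2\Gamma^{\bar t}_s\leq\exp(\bar\eta(\bar t-s))(|\bar X^{t,\xi,u}_{\bar t}-\bar X^{t,\xi,u'}_{\bar t}|^2+M^{(2)}_s)+C_\eta\int_{\bar t}^s\exp(\bar\eta(r-s))|u_r-u'_r|^2\Gamma^{\bar t}_r\,dr$ with $M^{(2)}$ a symmetric $\tilde G$-martingale; multiplying by $\Gamma^t_{\bar t}$ and using $\Gamma^t_{\bar t}\Gamma^{\bar t}_r=\Gamma^t_r$ puts the last term into the desired shape. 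The initial difference is controlled pathwise by the weighted finite-interval estimate: applying the $G$-It\^o formula to $\exp(-\lambda(s-t))|\bar X^{t,\xi,u}_s-\bar X^{t,\xi,u'}_s|^2\Gamma^t_s$ on $[t,\bar t]$ with $\lambda$ large gives $\Gamma^t_{\bar t}|\bar X^{t,\xi,u}_{\bar t}-\bar X^{t,\xi,u'}_{\bar t}|^2\leq\exp(\lambda\bar t)\bigl(M^{(1)}_{\bar t}+C\int_t^{\bar t}|u_r-u'_r|^2\Gamma^t_r\,dr\bigr)$ with $M^{(1)}$ a symmetric $\tilde G$-martingale (the initial difference vanishes since both start from $\xi$). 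Substituting, collecting $M^{(1)}$ and $\Gamma^t_{\bar t}M^{(2)}$ into a single symmetric $\tilde G$-martingale $M$ and all $\bar t$-dependent exponential prefactors into a deterministic $\tilde C_s$ and into $C'_\eta$, and comparing the two control integrals via $\exp(\bar\eta(\bar t-r))\leq\exp(\bar\eta\bar t)$ for $r\leq\bar t$, yields (iii) for $s>\bar t$, while the case $s\leq\bar t$ is the weighted finite-interval estimate itself. The delicate points throughout are the legitimacy of pulling the nonnegative $\mathcal F_{\bar t}$-measurable factor $\Gamma^t_{\bar t}$ out of $\mathbb{\hat E}^{\tilde G}_{\bar t}$, and the verification, via H\"older together with Lemmas \ref{myq.1}(i) and \ref{HW212}, that the assembled stochastic integrals are genuine symmetric $\tilde G$-martingales rather than merely $\tilde G$-supermartingales.
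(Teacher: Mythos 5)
Your proposal is correct, and its overall architecture is the same as the paper's: split the horizon at $\bar t$, use the flow property and the factorization $\Gamma^t_s=\Gamma^t_{\bar t}\,\Gamma^{\bar t}_s$ together with the tower property so that Lemma \ref{HW2} can be invoked on $[\bar t,\infty)$, and absorb whatever happens on $[t,\bar t]$ into constants that may depend on $\bar t$. The differences are organizational rather than conceptual. For (i)--(ii), the paper bounds the resulting factor $\mathbb{\hat{E}}^{\tilde{G}}_t[\Gamma^t_{\bar t}(1+|\bar X^{t,\xi,u}_{\bar t}|^2)]$ by H\"older's inequality combined with Lemmas \ref{myq.1} and \ref{HW212}, whereas you rerun a weighted $G$-It\^o computation with a large exponential weight $\lambda$; both are valid, the paper's route reusing existing lemmas verbatim while yours keeps the argument self-contained. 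For (iii), the paper inserts the intermediate process $\bar X^{\bar t,\bar X^{t,\xi,u}_{\bar t},u'}$ and splits the difference, via the triangle inequality, into a control-difference term (the Lemma \ref{HW2}(iii) estimate restarted at $\bar t$) and an initial-value-difference term (the pathwise contraction argument of Lemma 4.1 in \cite{HW}); you instead rerun the Lemma \ref{HW2}(iii) computation once on $[\bar t,\infty)$ keeping the nonzero initial gap, which is legitimate because that computation never uses that the gap vanishes at the starting time --- the gap only enters as an additive term at the lower integration limit, which then inherits the contraction factor $\exp(-\bar\eta(s-\bar t))$. Your version avoids the intermediate process and the factor $2$ from $(a+b)^2\leq 2a^2+2b^2$, at the cost of not being able to quote Lemma \ref{HW2} as a black box; both versions still need the same finite-interval pathwise bound on $[t,\bar t]$, which you and the paper obtain by the same weighted $G$-It\^o argument. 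Finally, the two ``delicate points'' you flag --- pulling the nonnegative time-$\bar t$-measurable factor $\Gamma^t_{\bar t}$ out of $\mathbb{\hat{E}}^{\tilde{G}}_{\bar t}$, and verifying via H\"older, Lemma \ref{myq.1} and Lemma \ref{HW212} that the assembled stochastic integrals are genuine symmetric $\tilde G$-martingales --- are exactly the points on which the paper's own proof (implicitly) relies, and at the same level of detail.
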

\begin{proof} Without loss of generality, assume $d=1$ and we shall give the sketch of the proof.
To simplify presentation, we shall prove only the case when $t\leq \bar{t}\leq s$.  Then applying Lemma \ref{HW2} on interval $[\bar{t},s]$, we obtain that
\[
\mathbb{\hat{E}}^{\tilde{G}}_{\bar{t}}[|\bar{X}^{t,\xi,u}_s|^2\Gamma^t_s(\Gamma^t_{\bar{t}})^{-1}]=\mathbb{\hat{E}}^{\tilde{G}}_{\bar{t}}[|\bar{X}^{\bar{t},\bar{X}^{t,\xi,u}_{\bar{t}},u}_s|^2\Gamma^t_s(\Gamma^t_{\bar{t}})^{-1}]\leq C_{\eta}(1+|\bar{X}^{t,\xi,u}_{\bar{t}}|^2),
\]
which implies that
\begin{align*}
\mathbb{\hat{E}}^{\tilde{G}}_t[|\bar{X}^{t,\xi,u}_s|^2\Gamma^t_s]\leq C_{\eta} \mathbb{\hat{E}}^{\tilde{G}}_t[\Gamma^t_{\bar{t}}(1+|\bar{X}^{t,\xi,u}_{\bar{t}}|^2)].
\end{align*}
Thus from H\"{o}lder inequality,  Lemmas \ref{myq.1} and \ref{HW212}, we can find a constant $C_{\bar{t}}$ depending on $G,\alpha_1,\alpha_2, L, \bar{t},U$ and $\eta$ such that
\begin{align*}
\mathbb{\hat{E}}^{\tilde{G}}_t[|\bar{X}^{t,\xi,u}_s|^2\Gamma^t_s]\leq
C_{\bar{t}}(1+|\xi|^2),
\end{align*}
and we obtain (i) holds. The property (ii) can be proved in a similar way.

Next we shall prove inequality (iii).  Note that $$
|\bar{X}^{t,\xi,u}_s-\bar{X}^{t,\xi,u^{\prime}}_s|^2\Gamma^t_s\leq 2|\bar{X}^{\bar{t},\bar{X}^{t,\xi,u}_{\bar{t}},u}_s-
\bar{X}^{\bar{t},\bar{X}^{t,\xi,u}_{\bar{t}},u^{\prime}}_s|^2\Gamma^t_s+2|\bar{X}^{\bar{t},\bar{X}^{t,\xi,u}_{\bar{t}},u^{\prime}}_s-\bar{X}^{\bar{t},\bar{X}^{t,\xi,u^{\prime}}_{\bar{t}}, u^{\prime}}_s|^2\Gamma^t_s.
$$
Then applying Lemma \ref{HW2}(ii), we conclude that
\begin{align*}
&|\bar{X}^{\bar{t},\bar{X}^{t,\xi,u}_{\bar{t}},u}_s-
\bar{X}^{\bar{t},\bar{X}^{t,\xi,u}_{\bar{t}},u^{\prime}}_s|^2\Gamma^t_s\\ & \ \ \leq \exp(-\bar{\eta}(s-\bar{t}))M^1_s+C_{\eta}\int^s_{\bar{t}}\exp(-\bar{\eta}(s-r))|u_r-u^{\prime}_r|^2\Gamma^t_rdr,
\end{align*}
where $M^1$ is a symmetric $\tilde{G}$-martingale.
Using the same method as Lemma 4.1 in \cite{HW}, we deduce that
\begin{align*}
&|\bar{X}^{\bar{t},\bar{X}^{t,\xi,u}_{\bar{t}},u^{\prime}}_s-\bar{X}^{\bar{t},\bar{X}^{t,\xi,u^{\prime}}_{\bar{t}}, u^{\prime}}_s|^2\Gamma^t_s
\leq \exp(-\bar{\eta}(s-\bar{t}))(M^2_s+|\bar{X}^{t,\xi,u}_{\bar{t}}-\bar{X}^{t,\xi,u^{\prime}}_{\bar{t}}|^2\Gamma^t_{\bar{t}}),
\end{align*}where $M^2$ is a symmetric $\tilde{G}$-martingale.
Applying $G$-It\^{o} formula and by a similar analysis as in Lemma \ref{HW2}, we can find a constant $C^{\prime}_{\bar{t}}$ depending on $G,\alpha_1,\alpha_2, L, \bar{t},U$ and $\eta$ such that, for each $r\in [t,\bar{t}]$
\[
|\bar{X}^{t,\xi,u}_{r}-\bar{X}^{t,\xi,u^{\prime}}_{r}|^2\Gamma^t_r\leq \tilde{C}^3_rM^3_r+C^{\prime}_{\bar{t}}\int^{\bar{t}}_t|u_l-u^{\prime}_l|^2\Gamma^t_ldl,
\] where $M^3$ is a symmetric $\tilde{G}$-martingale  and $\tilde{C}^3_r$ is a deterministic process.
From these inequalities, one can easily get the desired result.
\end{proof}

\begin{theorem}
\label{le-dpp-3}
Assume that $\xi,\xi^{\prime}\in L_{G}^{p}%
(\Omega_{t};\mathbb{R}^{n})$ with $p>2$ and $u,u^{\prime}\in\mathcal{U}[t,\infty)$.  Then  there exist two constants $q>1$ and  $\bar{C}_{\eta}$ depending only on
$G,U,\eta,L,\alpha_1,\alpha_2,\mu$ and $q$, such that for each $s\geq t$ q.s.
\begin{description}
\item[(i)]   $|Y^{t,\xi,u}_s|\leq \bar{C}_{\eta}(1+|X^{t,\xi,u}_s|^2);$
\item[(ii)] $|Y^{t,\xi,u}_t-Y^{t,\xi^{\prime},u}_t|\leq \bar{C}_{\eta}(1+|\xi|+|\xi^{\prime}|)|\xi-\xi^{\prime}|;$
\item[(iii)] $|Y^{t,\xi,u}_t-Y^{t,\xi,u^{\prime}}_t|\leq \bar{C}_{\eta}(1+|\xi|)|\mathbb{\hat{E}}_t[\int^{\infty}_t\exp(-\mu(r-t))| u_r-u^{\prime}_r|^{2q}dr]|^{\frac{1}{2q}}.$
\end{description}
\end{theorem}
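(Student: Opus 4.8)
The plan is to dispose of (i) at once and then to treat (ii) and (iii) through a single linearization--representation scheme, the second being a refinement of the first. Part (i) is nothing but the a priori bound already recorded in Theorem \ref{HM3}; one only checks that the constant there depends solely on $G,U,\eta,L,\alpha_1,\alpha_2,\mu$, so it may be renamed $\bar{C}_{\eta}$. For (ii) and (iii) I write $Y=Y^{t,\xi,u}$ and $Y'=Y^{t,\xi',u'}$ (taking $u'=u$ in (ii) and $\xi'=\xi$ in (iii)), and set $\hat{Y}=Y-Y'$, $\hat{X}=X^{t,\xi,u}-X^{t,\xi',u'}$. On each finite interval $[t,T]$, $\hat{Y}$ solves a linear $G$-BSDE: using (B2) I linearize $f(X_r,Y_r,Z_r,u_r)-f(X'_r,Y'_r,Z'_r,u'_r)=a_r\hat{Y}_r+b_r\hat{Z}_r+m_r$, and likewise each $g_{ij}$ with coefficients $c^{ij}_r,d^{ij}_r$ and source $n^{ij}_r$, where $|b_r|,|d^{ij}_r|\le\alpha_2$ and $|m_r|+\sum_{ij}|n^{ij}_r|\le L[(1+|X_r|+|X'_r|)|\hat{X}_r|+|u_r-u'_r|]$. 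Lemma \ref{the5.2} then represents, in the extended $\tilde{G}$-space,
\[
\hat{Y}_t=\mathbb{\hat{E}}^{\tilde{G}}_t\Big[\tilde{\Gamma}^t_T\hat{Y}_T+\int_t^T m_r\tilde{\Gamma}^t_r\,dr+\int_t^T n^{ij}_r\tilde{\Gamma}^t_r\,d\langle B^i,B^j\rangle_r\Big],
\]
with $\tilde{\Gamma}^t$ the density solving \eqref{LSDE2}.

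The decisive observation is that (B3) forces exponential decay of this density. Linearizing the $y$-monotonicity gives $a_r+2G((c^{ij}_r)_{ij})\le-\mu$, so by the Dol\'eans--Dade formula together with $\int_t^s c^{ij}_r\,d\langle B^i,B^j\rangle_r\le\int_t^s 2G((c^{ij}_r)_{ij})\,dr$ (Remark \ref{yw1}) I expect to obtain $\tilde{\Gamma}^t_s\le\exp(-\mu(s-t))\,\Gamma^t_s$, where $\Gamma^t_s$ is exactly the martingale density of Lemmas \ref{HW212}--\ref{HW2} with $\beta^1=d$, $\beta^2=b$ bounded by $\alpha_2$. This lets me pass to the limit $T\to\infty$: by (i), $|\hat{Y}_T|\le\bar{C}_{\eta}(1+|X_T|^2+|X'_T|^2)$, hence $\mathbb{\hat{E}}^{\tilde{G}}_t[\tilde{\Gamma}^t_T|\hat{Y}_T|]\le\bar{C}_{\eta}\exp(-\mu(T-t))\,\mathbb{\hat{E}}^{\tilde{G}}_t[(1+|X_T|^2+|X'_T|^2)\Gamma^t_T]$, which tends to $0$ by Lemma \ref{HW2}(i). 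Thus $\hat{Y}_t$ equals the convergent infinite-horizon source integral, and it remains only to estimate that integral in each case.

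For (ii), where $u'=u$, the sources carry only the $\hat{X}$-term. Bounding the $d\langle B^i,B^j\rangle$-integral by a $\bar{\sigma}^2$-multiple of the $dr$-integral and inserting $\tilde{\Gamma}^t_r\le\exp(-\mu(r-t))\Gamma^t_r$, I would apply the Cauchy--Schwarz inequality under $\mathbb{\hat{E}}^{\tilde{G}}_t$ at each $r$: the factor $1+|X_r|+|X'_r|$ is controlled by Lemma \ref{HW2}(i), giving $\le C(1+|\xi|+|\xi'|)$, and the factor $|\hat{X}_r|$ by Lemma \ref{HW2}(ii), giving $\exp(-\bar{\eta}(r-t))|\xi-\xi'|$. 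The surviving time integral $\int_t^\infty\exp(-(\mu+\bar{\eta})(r-t))\,dr$ is finite, which yields (ii).

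For (iii), with $\xi'=\xi$, the sources contain both the $\hat{X}$-term (handled as in (ii) but with Lemma \ref{HW2}(iii) replacing (ii)) and the genuine control term $|u_r-u'_r|$. \textbf{The hardest step} will be to convert the extended, $\Gamma$-weighted expectations back into the plain $\mathbb{\hat{E}}_t[\int_t^\infty\exp(-\mu(r-t))|u_r-u'_r|^{2q}\,dr]$ of the statement. The plan is to apply H\"older's inequality in $\omega$ and in $r$ simultaneously with exponents $2q$ and $\tfrac{2q}{2q-1}$, peeling off $|u_r-u'_r|$ carrying the weight $\exp(-\mu(r-t))^{1/(2q)}$. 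Since $|u_r-u'_r|$ is a functional of $B$ alone, its $\tilde{G}$- and $G$-expectations coincide, producing precisely the target quantity to the power $1/(2q)$; the complementary factor is a power of $\int_t^\infty\exp(-\mu(r-t))\,\mathbb{\hat{E}}^{\tilde{G}}_t[(\Gamma^t_r)^{2q/(2q-1)}]\,dr$, whose inner moment, by Lemma \ref{HW212}, grows only like $\exp\!\big(C_G\tfrac{2q}{(2q-1)^2}\alpha_2^2(r-t)\big)$, a rate tending to $0$ as $q\to\infty$. \textbf{The crux is therefore to fix $q>1$ large enough that this growth rate is strictly below $\mu$ (and $\bar{\eta}$), so that every time integral converges to a finite constant depending only on the listed parameters and $q$.} Collecting the two contributions then gives (iii).
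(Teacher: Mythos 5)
Your overall scheme coincides with the paper's: linearize the difference, represent it in the extended $\tilde{G}$-expectation space via Lemma \ref{the5.2}, extract the decay $\tilde{\Gamma}^{t}_s\leq\exp(-\mu(s-t))\Gamma^{t}_s$ from the monotonicity condition (B3), kill the terminal term by the quadratic-growth bound together with Lemma \ref{HW2}(i), and close with H\"older plus Lemma \ref{HW212}, choosing $q$ large so that the moment-growth rate $C_G(p^2-p)\alpha_2^2$ of the density falls below $\mu$ (this is exactly the paper's choice of $p\in(1,2)$ close to $1$). However, there is a genuine gap at your very first step. You assert that $\hat{Y}=Y^{u}-Y^{u'}$ ``solves a linear $G$-BSDE'' on $[t,T]$ and apply Lemma \ref{the5.2} to obtain an \emph{exact} representation of $\hat{Y}_t$. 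This is not legitimate: the equation satisfied by $\hat{Y}$ carries the term $-(K^{u}_T-K^{u}_s)+(K^{u'}_T-K^{u'}_s)$, and a difference of two decreasing $G$-martingales is in general neither decreasing nor a $G$-martingale, so $(\hat{Y},\hat{Z},K^{u}-K^{u'})$ is not a solution triple in the sense of Theorem \ref{the4.1} and Lemma \ref{the5.2} does not apply to it; nor can you subtract the two representations of $Y^{u}$ and $Y^{u'}$, since sublinear expectations are not additive. The paper's proof handles precisely this point: it rewrites the equation for $\hat{Y}_s+K^{u'}_s$, so that the only decreasing martingale left is $K^{u}$, while $K^{u'}$ enters the generator as extra sources $-a^{\varepsilon}_sK^{u'}_s$ and $-c^{\varepsilon}_sK^{u'}_s$; after applying Lemma \ref{the5.2}, identity \eqref{yw2} recombines all $K^{u'}$-terms into exactly $K^{u'}_t$, which cancels. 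The price is that the representation survives only as the one-sided inequality \eqref{myq4}, so bounding $|\hat{Y}_t|$ requires running the whole argument a second time with $u$ and $u'$ interchanged --- a symmetrization step that your equality-based version omits but that becomes necessary once the equality is lost. The same issue affects your part (ii), where the two decreasing martingales come from the two initial conditions.

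A second, more minor imprecision: the linearization cannot in general be performed with exact coefficients satisfying $a_r+2G((c^{ij}_r)_{ij})\leq-\mu$ and no remainder. The paper relies on Lemma 3.5 of \cite{HW}, which produces such coefficients only up to error sources $m^{\varepsilon}_s,n^{\varepsilon}_s$ bounded by $2(L+\alpha_2)\varepsilon$; correspondingly the estimate carries an extra term of order $\varepsilon/\mu$ that is removed by letting $\varepsilon\rightarrow0$ before $T\rightarrow\infty$. With these two repairs, your remaining estimates --- Cauchy--Schwarz with Lemma \ref{HW2}(i)--(ii) for part (ii), and Lemma \ref{HW2}(iii), Fubini and the large-$q$ H\"older step for part (iii), together with your (correct) observation that $\mathbb{\hat{E}}^{\tilde{G}}_t$ and $\mathbb{\hat{E}}_t$ agree on functionals of $B$ alone --- go through and reproduce the paper's proof.
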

\begin{proof}
The property (i) is immediate from Theorem \ref{HM3}.
Next we shall show the property (iii), since (ii) can be proved in a similar way (see also Lemma A.1 of \cite{HW}).
Without loss of generality, assume that $d=1$. For convenience, we omit superscripts $t$ and $\xi$.

Set $(\hat{Y},\hat{Z})=(Y^{u}-Y^{u^{\prime}},Z^{u}-Z^{u^{\prime}})$.
Then we have for each $s\geq t$,
\[
\hat{Y}_{s}+K_{s}^{u^{\prime}}=\hat{Y}_T+K_{T}^{u^{\prime}}+\int_{s}^{T}\hat{f}_{r}dr+\int%
_{s}^{T}\hat{g}_{r}d\langle B\rangle_{r}-\int_{s}^{T}\hat{Z}_{r}dB_{r}%
-(K_{T}^{u}-K_{s}^{u}),
\]
where $\hat{f}_{s}=f(X^{u}_s,Y_s^{u},Z_{s}%
^{u},u_s)-f(X^{u^{\prime}}_s,Y_s^{u^{\prime}},Z_{s}%
^{u^{\prime}},u^{\prime}_s)$, $\hat{g}_{s}=g(X^{u}_s,Y_s^{u}, Z_{s}%
^{u},u_s)-g(X^{u^{\prime}}_s,Y_s^{u^{\prime}},Z_{s}%
^{u^{\prime}},u^{\prime}_s)$.

By Lemma 3.5  in \cite{HW}, for each $\varepsilon>0$, there exist four bounded processes $a^{\varepsilon}_s, b^{\varepsilon}_s$, $c^{\varepsilon}_s$, $d^{\varepsilon}_s\in M^2_G(0,T)$ for each $T\geq 0$, such that
\[
\hat{f}_{s}=a_{s}^{\varepsilon}\hat{Y}_{s}+b_{s}^{\varepsilon}\hat{Z}%
_{s}+m_s-m_{s}^{\varepsilon},\ \hat{g}_{s}=c_{s}^{\varepsilon}\hat{Y}%
_{s}+d_{s}^{\varepsilon}\hat{Z}_{s}+n_s-n_{s}^{\varepsilon},
\]
and $|b_{s}^{\varepsilon}|\leq \alpha_2$, $|d_{s}^{\varepsilon}|\leq \alpha_2$ $|m_{s}^{\varepsilon}|\leq2(L+\alpha_2)\varepsilon$, $|n_{s}^{\varepsilon}%
|\leq2(L+\alpha_2)\varepsilon$, $a_{s}^{\varepsilon}+2G(c_{s}^{\varepsilon})\leq -\mu$, $m_{s}=f(X^{u}_s,Y_s^{u^{\prime}}, Z_{s}%
^{u^{\prime}},u_s)-f(X^{u^{\prime}}_s,Y_s^{u^{\prime}},Z_{s}%
^{u^{\prime}},u^{\prime}_s)$, $n_{s}=g(X^{u}_s,Y_s^{u^{\prime}},Z_{s}%
^{u^{\prime}},u_s)-g(X^{u^{\prime}}_s,Y_s^{u^{\prime}},Z_{s}%
^{u^{\prime}},u^{\prime}_s)$.

Applying Lemma \ref{the5.2} (see also Theorem 3.6 in \cite{HW}) yields that
\begin{align*}
 \hat{Y}_{t}+K_{t}^{u^{\prime}}
&  =\mathbb{\hat{E}}_{t}^{\tilde{G}}[\tilde{\Gamma}_{T}%
^{t,\varepsilon}(\hat{Y}_{T}+K_{T}^{u^{\prime}})+\int_{t}^{T}(m_s+2G(n_s)-m_{s}^{\varepsilon
}-a_{s}^{\varepsilon}K_{s}^{u^{\prime}})\tilde{\Gamma}_{s}^{t,\varepsilon}ds\\
&  +\int_{t}^{T}(-n_{s}^{\varepsilon}-c_{s}^{\varepsilon}K_{s}^{u^{\prime}}%
)\tilde{\Gamma}_{s}^{t,\varepsilon}d\langle B\rangle_{s}+\int_{t}^{T}n_s\tilde{\Gamma}_{s}^{t,\varepsilon}d\langle B\rangle_s-\int_{t}^{T}2G(n_s)\tilde{\Gamma}_{s}^{t,\varepsilon}ds],
\end{align*}
where $\{\tilde{\Gamma}^{t,\varepsilon}_{s}\}_{s\in\lbrack t,\infty)}$ is given by
\begin{equation*}
\tilde{\Gamma}^{t,\varepsilon}_{s}=\exp(\int_{t}^{s}(a^{\varepsilon}_{r}-b^{\varepsilon}_{r}d^{\varepsilon}_{r})dr+\int_{t}^{s}c^{\varepsilon}_{r}d\langle
B\rangle_{r})\mathcal{E}_{s}^{B}\mathcal{E}_{s}^{\tilde{B}}.
\end{equation*}
Here $\mathcal{E}_{s}^{B}=\exp(\int^s_td^{\varepsilon}_{r}dB_r-\frac{1}{2}\int^s_t|d^{\varepsilon}_{r}|^2d\langle B\rangle_r)$ and
$\mathcal{E}_{s}^{\tilde{B}}=\exp(\int^s_tb^{\varepsilon}_{r}d\tilde{B}_r-\frac{1}{2}\int^s_t|b^{\varepsilon}_{r}|^2d\langle \tilde{B}\rangle_r)$.
Therefore, from equation \eqref{yw2} we get that
\begin{align}
 \hat{Y}_{t}+K_{t}^{u^{\prime}}
 &  \leq \mathbb{\hat{E}}_{t}^{\tilde{G}}%
[\tilde{\Gamma}_{T}^{t,\varepsilon}\hat{Y}_{T}+\int_{t}^{T}(m_s+2G(n_s))\tilde{\Gamma}_{s}^{t,\varepsilon}ds-\int_{t}^{T}m_{s}^{\varepsilon}\tilde{\Gamma}_{s}^{t,\varepsilon}ds\nonumber\\& \ \ \
\ \ \ \ \ \ \ \ \ \ \ \ \ \ \ \ \ -\int_{t}^{T}n_{s}^{\varepsilon
}\tilde{\Gamma}_{s}^{t,\varepsilon}d\langle B\rangle_{s}]+K^{u^{\prime}}_t,\ \ q.s.\label{myq4}
\end{align}
Note that for each $s\geq t$, $
\tilde{\Gamma}^{t,\varepsilon}_s\leq \exp(-\mu (s-t)){\Gamma}^{t,\varepsilon}_s,
$
where $
{\Gamma}^{t,\varepsilon}_{s}=1+\int_{t}^{s}d^{\varepsilon}_{r}{\Gamma}^{t,\varepsilon}_{r}dB_{r}+\int_{t}%
^{s}b^{\varepsilon}_{r}{\Gamma}^{t,\varepsilon}_{r}d\tilde{B}_{r}.
$
Thus it follows from property (i) and Lemma \ref{HW2} (i) that
\begin{align}\label{myq111}
\mathbb{\hat{E}}_{t}^{\tilde{G}}%
[\tilde{\Gamma}_{T}^{t,\varepsilon}\hat{Y}_{T}]\leq 2\exp(-\mu (T-t)) \bar{C}_{\eta}(1+C_{\eta})(1+|\xi|^2) .
\end{align}

Note that $|m_s|+2G(|n_s|)\leq(1+\bar{\sigma}^2)L((1+|X^{u}_s|+|X^{u^{\prime}}_s|)|X^{u}_s-X^{u^{\prime}}_s|+|u_s-u^{\prime}_s|)$.
Then by equation \eqref{myq4},
we derive that
\begin{align}
\hat{Y}_{t}\leq &
(1+\bar{\sigma}^2)L\mathbb{\hat{E}}^{\tilde{G}}_t[\int_{t}^{T}\exp(-\mu (s-t))((1+|X^{u}_s|+|X^{u^{\prime}}_s|)|X^{u}_s-X^{u^{\prime}}_s|+|u_s-u^{\prime}_s|){\Gamma}^{t,\varepsilon}_{s}ds]\nonumber\\ & \ \ \ \
+2\exp(-\mu (T-t))  \bar{C}_{\eta}(1+C_{\eta})(1+|\xi|^2)+\frac{2(L+\alpha_2)(1+\bar{\sigma}^2)}{\mu}{\varepsilon}.\label{myq5}
\end{align}

Recalling H\"{o}lder's inequality and Lemma \ref{HW2} (i), we conclude that
\begin{align*}
&\mathbb{\hat{E}}^{\tilde{G}}_t[\int_{t}^{T}\exp(-\mu (s-t))(1+|X^{u}_s|+|X^{u^{\prime}}_s|)|X^{u}_s-X^{u^{\prime}}_s|{\Gamma}^{t,\varepsilon}_{s}ds]\\
&\leq \sqrt{3}\mathbb{\hat{E}}^{\tilde{G}}_t[\int_{t}^{T}\exp(-\mu (s-t))(1+|X^{u}_s|^2+|X^{u^{\prime}}_s|^2){\Gamma}^{t,\varepsilon}_{s}ds]^{\frac{1}{2}}\mathbb{\hat{E}}^{\tilde{G}}_t[\int_{t}^{T}\exp(-\mu (s-t))|X^{u}_s-X^{u^{\prime}}_s|^2{\Gamma}^{t,\varepsilon}_{s}ds]^{\frac{1}{2}}\\
&\leq (\frac{6(1+{C_{\eta}})}{\mu})^{\frac{1}{2}}(1+|\xi|)\mathbb{\hat{E}}^{\tilde{G}}_t[\int_{t}^{T}\exp(-\mu (s-t))|X^{u}_s-X^{u^{\prime}}_s|^2{\Gamma}^{t,\varepsilon}_{s}ds]^{\frac{1}{2}}.
\end{align*}

On the other hand, recalling  Lemma \ref{HW2} (iii), we get that
\begin{align*}
& \mathbb{\hat{E}}^{\tilde{G}}_t[\int_{t}^{T}\exp(-\mu (s-t))|X^{u}_s-X^{u^{\prime}}_s|^2{\Gamma}^{t,\varepsilon}_{s}ds]^{\frac{1}{2}}
\\&\leq\mathbb{\hat{E}}^{\tilde{G}}_t[\int_{t}^{T}\exp(-\mu (s-t))[\exp(-\bar{\eta} (s-t))M_s
 +C_{\eta}\int^s_t\exp(\bar{\eta}(r-s))|u_r-u^{\prime}_r|^2\Gamma_{r}^{t,\varepsilon}dr]ds]\}^{\frac{1}{2}}\\&
\leq\mathbb{\hat{E}}^{\tilde{G}}_t[\int_{t}^{T}C_{\eta}\exp(-\mu (s-t))\int^s_t\exp(\bar{\eta}(r-s))|u_r-u^{\prime}_r|^2{\Gamma}^{t,\varepsilon}_rdr]ds]^{\frac{1}{2}}
\\&=\sqrt{C_{\eta}}\mathbb{\hat{E}}^{\tilde{G}}_t[\int_{t}^{T}\exp(-\mu (r-t))|u_r-u^{\prime}_r|^2{\Gamma}^{t,\varepsilon}_rdr]^{\frac{1}{2}},
\end{align*}
where we have used Fubini's theorem in the last equality. Then by a similar analysis,
we can also obtain that
\begin{align*}
\mathbb{\hat{E}}^{\tilde{G}}_t[\int_{t}^{T}\exp(-\mu(s-t))|u_s-u^{\prime}_s|{\Gamma}^{t,\varepsilon}_sds]
\leq (\frac{1}{\mu})^{\frac{1}{2}}\{\mathbb{\hat{E}}^{\tilde{G}}_t[\int_{t}^{T}\exp(-\mu(r-t))|u_r-u^{\prime}_r|^2{\Gamma}^{t,\varepsilon}_rdr]\}^{\frac{1}{2}}.
\end{align*}
Applying H\"{o}lder's inequality again, we deduce that
\begin{align*}
&\mathbb{\hat{E}}^{\tilde{G}}_t[\int_{t}^{T}\exp(-\mu (r-t))|u_r-u^{\prime}_r|^2{\Gamma}^{t,\varepsilon}_rdr]
\\&\leq\mathbb{\hat{E}}^{\tilde{G}}_t[\int_{t}^{T}\exp(-\mu  (r-t))|u_r-u^{\prime}_r|^{2q}dr]^{\frac{1}{q}}|\int_{t}^{T}\exp(-\mu (r-t))\mathbb{\hat{E}}^{\tilde{G}}_t[|{\Gamma}^{t,\varepsilon}_r|^p]dr|^{\frac{1}{p}},
\end{align*}
where $1/q+1/p=1$. Then by Lemma \ref{HW212} and choosing $p\in (1,2)$ small enough, there exists a constant $C_{\mu}$ depending on $\mu$ and $p$, such that
\begin{align*}
&\mathbb{\hat{E}}^{\tilde{G}}_t[\int_{t}^{T}\exp(-\mu (r-t))|u_r-u^{\prime}_r|^2{\Gamma}^{t,\varepsilon}_rdr]
\leq C_{\mu}\mathbb{\hat{E}}^{\tilde{G}}_t[\int_{t}^{T}\exp(-\mu (r-t))|u_r-u^{\prime}_r|^{2q}dr]^{\frac{1}{q}}.
\end{align*}

Therefore, by equation \eqref{myq5}, sending $\varepsilon\rightarrow 0$ and then letting $T\rightarrow\infty$,
 we could find a constant $\tilde{C}_{\eta}$ depending only on
$G,U,\eta,L,\alpha_1,\alpha_2,q$ and $\mu$ so that
\begin{align*}
\hat{Y}_{t}\leq \tilde{C}_{\eta}(1+|\xi|)\mathbb{\hat{E}}^{\tilde{G}}_t[\int_{t}^{\infty}\exp(-\mu (r-t))|u_r-u^{\prime}_r|^{2q}dr]^{\frac{1}{2q}}.
\end{align*}
Using the same method, we  also have that
\begin{align*}
Y^{u^{\prime}}_t-Y^{u}_t\leq \tilde{C}_{\eta}(1+|\xi|)\mathbb{\hat{E}}^{\tilde{G}}_t[\int_{t}^{\infty}\exp(-\mu (r-t))|u_r-u^{\prime}_r|^{2q}dr]^{\frac{1}{2q}}.
\end{align*}
which is the desired result.
\end{proof}
\begin{remark}
{\upshape
We remark that the above lemma also holds for $\bar{Y}$ by Lemma \ref{myq7}.
}
\end{remark}

Now we shall give the main results of this section.
\begin{lemma}
\label{le-dpp-2} Let $u\in\mathcal{U}[t,\infty)$ be given. Then there exists a
sequence $(u^{k})_{k\geq1}$ in $\mathbb{U}[t,\infty)$ such that%
\[
\lim_{k\rightarrow\infty}\mathbb{\hat{E}}[\int_{t}^{\infty}\exp(-\mu s)|u_{s}-u_{s}^{k}%
|^{2q}ds]=0.
\]
\end{lemma}
\begin{proof}
Note that $u$ is  bounded by $M:=\sup\{|a|:a\in U\}$. Then for each $\varepsilon>0$, there is a constant $T$ such that
\[
\int^{\infty}_T\exp(-\mu s)ds\leq\frac{\varepsilon}{2^{4q+1}M^{2q}}.
\]
By Remark \ref{myq6} and using the same method as in Lemma 13 in \cite{HJ1}, we can find a process
$v^{\prime}\in\mathbb{U}[t,T]$ such that $$\mathbb{\hat{E}}[\int_{t}^{T}|u_{s}%
-v^{\prime}_{s}|^{2q}ds]\leq\frac{\varepsilon}{2^{2q}}.$$
Denote $v_s:=v^{\prime}_s\mathbf{1}_{[t,T]}(s)+u_0\mathbf{1}_{(T,\infty)}(s),$ where $u_0\in U$ is a fixed constant.
It is easy to check that $v\in\mathbb{U}[t,\infty).$

Then we have
\begin{align*}
&\mathbb{\hat{E}}[\int_{t}^{\infty}\exp(-\mu  s)|u_{s}-v_{s}|^{2q}ds]\\&
\leq 2^{2q-1}\mathbb{\hat{E}}[\int_{t}^{\infty}\exp(-\mu s)|u_{s}-v^{\prime}_{s}|^{2q}ds]+2^{2q-1}\mathbb{\hat{E}}[\int_{t}^{\infty}\exp(-\mu s)|v_{s}-v^{\prime}_{s}|^{2q}ds]
\\&\leq \frac{\varepsilon}{2}+2^{2q-1}\mathbb{\hat{E}}[\int_{T}^{\infty}\exp(-\mu s)|u_{s}-v^{\prime}_{s}|^{2q}ds]+2^{2q-1}\mathbb{\hat{E}}[\int_{T}^{\infty}\exp(-\mu s)|v_{s}-v^{\prime}_{s}|^{2q}ds]\\&\leq \varepsilon,
\end{align*}
which completes the proof.
\end{proof}

\begin{theorem}
\label{the-dpp-11} The value function $V(t,x)$ is a deterministic function  and
\[
V(t,x)=\inf_{u\in\mathcal{U}^{t}[t,\infty)}Y_{t}^{t,x,u}.
\]
Moreover, $V(x)=V(t,x)$ for each $t\geq 0$.
\end{theorem}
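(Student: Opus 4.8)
The plan is to introduce the deterministic candidate
\[
W(t,x):=\inf_{u\in\mathcal{U}^{t}[t,\infty)}Y_{t}^{t,x,u}
\]
and to prove that $W(t,x)$ realises the essential infimum defining $V(t,x)$; time-independence will then follow from the stationarity of the increments of $G$-Brownian motion. First I would check that $W(t,x)$ is a genuine finite deterministic constant. For $u\in\mathcal{U}^{t}[t,\infty)$ the forward process $X^{t,x,u}$, and hence the triple $(Y^{t,x,u},Z^{t,x,u},K^{t,x,u})$, is driven only by the increments of $B$ after time $t$ (the coefficients being autonomous and $x$ deterministic), so $Y_{t}^{t,x,u}$ is measurable with respect to the $\sigma$-algebra generated by these post-$t$ increments. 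Being also $\mathcal{F}_{t}$-measurable, and the post-$t$ increments being independent of $\mathcal{F}_{t}$, one gets $Y_{t}^{t,x,u}=\mathbb{\hat{E}}_{t}[Y_{t}^{t,x,u}]=\mathbb{\hat{E}}[Y_{t}^{t,x,u}]$, a deterministic number. Finiteness, together with the bound $|Y_{t}^{t,x,u}|\leq\bar{C}_{\eta}(1+|x|^{2})$, comes from Theorem \ref{le-dpp-3}(i) since $X_{t}^{t,x,u}=x$, so $W(t,x)$ is a well-defined finite constant.

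Next I would verify that $W(t,x)$ satisfies property (i) of Definition \ref{esssup}, i.e. $W(t,x)\leq Y_{t}^{t,x,u}$ q.s. for every $u\in\mathcal{U}[t,\infty)$. The first step treats a control $u=\sum_{i=1}^{n}\mathbf{1}_{A_{i}}u^{i}\in\mathbb{U}[t,\infty)$ with $u^{i}\in\mathcal{U}^{t}[t,\infty)$ and $\mathbf{1}_{A_{i}}\in L_{G}^{2}(\Omega_{t})$: exploiting the locality of $G$-SDEs and $G$-BSDEs (on $A_{i}$ the dynamics coincide with those driven by $u^{i}$), uniqueness yields $\mathbf{1}_{A_{i}}X^{t,x,u}=\mathbf{1}_{A_{i}}X^{t,x,u^{i}}$ and $\mathbf{1}_{A_{i}}Y_{t}^{t,x,u}=\mathbf{1}_{A_{i}}Y_{t}^{t,x,u^{i}}$, whence $Y_{t}^{t,x,u}=\sum_{i}\mathbf{1}_{A_{i}}Y_{t}^{t,x,u^{i}}\geq W(t,x)$ because each $Y_{t}^{t,x,u^{i}}\geq W(t,x)$ and $\sum_{i}\mathbf{1}_{A_{i}}=1$. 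For a general $u\in\mathcal{U}[t,\infty)$ I would invoke Lemma \ref{le-dpp-2} to choose $u^{k}\in\mathbb{U}[t,\infty)$ with $\mathbb{\hat{E}}[\int_{t}^{\infty}\exp(-\mu s)|u_{s}-u_{s}^{k}|^{2q}ds]\to0$; applying Theorem \ref{le-dpp-3}(iii) with $\xi=x$, raising to the power $2q$, taking $\mathbb{\hat{E}}$, and using the tower property together with $\exp(-\mu(r-t))=\exp(\mu t)\exp(-\mu r)$ gives $\mathbb{\hat{E}}[|Y_{t}^{t,x,u}-Y_{t}^{t,x,u^{k}}|^{2q}]\to0$. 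Hence $Y_{t}^{t,x,u^{k}}\to Y_{t}^{t,x,u}$ in $L_{G}^{2q}$, so a subsequence converges q.s.; since $Y_{t}^{t,x,u^{k}}\geq W(t,x)$ q.s., passing to the limit yields $Y_{t}^{t,x,u}\geq W(t,x)$ q.s.

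Property (ii) is then immediate: if $\eta\leq Y_{t}^{t,x,u}$ q.s. for all $u\in\mathcal{U}[t,\infty)$, then in particular this holds for all $u\in\mathcal{U}^{t}[t,\infty)$, whose values are deterministic, so choosing for each $\varepsilon>0$ a control with $Y_{t}^{t,x,u}<W(t,x)+\varepsilon$ gives $\eta\leq W(t,x)+\varepsilon$ q.s. and hence $\eta\leq W(t,x)$. Thus the essential infimum exists, $V(t,x)=W(t,x)=\inf_{u\in\mathcal{U}^{t}[t,\infty)}Y_{t}^{t,x,u}$, and it is a deterministic function. Finally, for time-independence I would use that $(B_{s+t}-B_{t})_{s\geq0}$ is again a $G$-Brownian motion with the same $G$, which induces a value-preserving bijection between $\mathcal{U}^{t}[t,\infty)$ and $\mathcal{U}^{0}[0,\infty)$ under which, by autonomy of the coefficients, $Y_{t}^{t,x,u}=Y_{0}^{0,x,\tilde{u}}$; taking infima gives $V(t,x)=V(0,x)=V(x)$.

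I expect the main obstacle to be the lower bound $V(t,x)\geq W(t,x)$, specifically the interplay between the partition/locality identity $Y_{t}^{t,x,u}=\sum_{i}\mathbf{1}_{A_{i}}Y_{t}^{t,x,u^{i}}$ for $\mathbb{U}$-controls and the quantitative stability estimate of Theorem \ref{le-dpp-3}(iii) needed to pass from $\mathbb{U}[t,\infty)$ to all of $\mathcal{U}[t,\infty)$. Care is required because the $\mathbb{\hat{E}}_{t}$-conditional, exponentially weighted modulus appearing there must be converted, through the tower property and the factor $\exp(\mu t)$, into the unconditional convergence supplied by Lemma \ref{le-dpp-2}, and because the q.s.\ lower bound must be extracted along a subsequence from $L_{G}^{2q}$-convergence.
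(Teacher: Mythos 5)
Your proposal is correct and follows essentially the same route as the paper: the same three ingredients (the partition/locality identity $Y_{t}^{t,x,u}=\sum_{i}\mathbf{1}_{A_{i}}Y_{t}^{t,x,u^{i}}$ for $\mathbb{U}[t,\infty)$-controls, the approximation Lemma \ref{le-dpp-2} combined with the stability estimate of Theorem \ref{le-dpp-3}(iii) to pass to general $u\in\mathcal{U}[t,\infty)$ via q.s.\ convergence along a subsequence, and stationarity of $G$-Brownian increments for time-independence) in the same order. Your write-up is merely more explicit than the paper's in verifying both clauses of Definition \ref{esssup} for the deterministic candidate $W(t,x)$, which the paper compresses into its opening two lines.
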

\begin{proof}
Note that  $Y_{t}^{t,x,u}$
is a constant for each $u\in\mathcal{U}^{t}[t,\infty)$.
Since $\mathcal{U}^{t}[t,\infty)\subset\mathcal{U}[t,\infty)$, it is easy to check that
\[
\inf_{u\in\mathcal{U}^{t}[t,\infty)}Y_{t}^{t,x,u}\geq \underset{u\in\mathcal{U}[t,\infty)}{ess\inf}Y_{t}^{t,x,u}.
\]
In the following we shall
show that $Y_{t}^{t,x,u}\geq\inf_{v\in\mathcal{U}^{t}[t,\infty)}Y_{t}^{t,x,v}$ q.s.
for each $u\in\mathcal{U}[t,\infty)$.

For each given $u\in\mathcal{U}[t,\infty),$ from
Lemma \ref{le-dpp-2}, we can find  a sequence $u^{k}=\sum_{i=1}^{N_{k}%
}\mathbf{1}_{A_{i,k}}u^{i,k}\in\mathbb{U}[t,\infty)$, $k=1,2,...,$ such that
\[
\lim\limits_{k\rightarrow\infty}\mathbb{\hat{E}}[\int_{t}^{\infty}\exp(-\mu s)|u_{s}-u^k_s|^{2q}ds]=0.
\]

By the uniqueness  of $G$-FBSDE with infinite horizon and the standard arguments, we can obtain that
\[
\sum_{i=1}^{N_{k}}\mathbf{1}_{A_{i,k}}%
Y_{t}^{t,x,u^{i,k}}=Y_t^{t,x,u^k} \ \ { q.s.}
\]
Then applying Theorem \ref{le-dpp-3} (iii) and choosing a subsequence if
necessary, we deduce that  $\sum_{i=1}^{N_{k}}\mathbf{1}_{A_{i,k}}Y_{t}^{t,x,u^{i,k}}$ converges
to $Y_{t}^{t,x,u}$ q.s.
Therefore, it follows from
\[
\sum_{i=1}^{N_{k}}\mathbf{1}_{A_{i,k}}Y_{t}^{t,x,u^{i,k}}\geq\inf_{v\in
\mathcal{U}^{t}[t,T]}Y_{t}^{t,x,v}\ \ { q.s.}
\]
 that $Y_{t}^{t,x,u}\geq\inf_{v\in\mathcal{U}^{t}[t,\infty)}Y_{t}^{t,x,v}$
q.s. Thus
\[
V(t,x)=\underset{u\in\mathcal{U}[t,\infty)}{ess\inf}Y_{t}^{t,x,u}=\inf
_{v\in\mathcal{U}^{t}[t,\infty)}Y_{t}^{t,x,v}.
\]
Note that $\{B_{s+t}-B_t\}_{s\geq 0}$ is also a $G$-Brownian motion and $\mathcal{U}^t[t,\infty)$ is the shifted space with respect to $\mathcal{U}[0,\infty)$.
Then by the uniqueness of $G$-BSDEs with infinite horizon we  get $V(t,x)=V(0,x)$ for each $t\geq 0$ and this completes the proof.
\end{proof}

\begin{corollary}
\label{le-dpp-4}
 For any $x,y\in
\mathbb{R}^{n}$, we have
\[
 |V(x)-V(y)|\leq \bar{C}_{\eta} (1+|x|+|y|)|x-y|.
\]

\end{corollary}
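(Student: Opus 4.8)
The plan is to reduce the claimed bound directly to the pathwise estimate (ii) of Theorem \ref{le-dpp-3}, using the representation of the value function furnished by Theorem \ref{the-dpp-11}. First I would recall that, by Theorem \ref{the-dpp-11}, the value function is deterministic and admits the representation
\[
V(x)=V(0,x)=\inf_{u\in\mathcal{U}^{0}[0,\infty)}Y_{0}^{0,x,u},
\]
where each $Y_{0}^{0,x,u}$ is a constant for $u\in\mathcal{U}^{0}[0,\infty)$. Since the trivial information at time $0$ makes any deterministic point belong to $L_{G}^{p}(\Omega_{0})$, the estimate (ii) of Theorem \ref{le-dpp-3} applies with $t=0$, $\xi=x$ and $\xi^{\prime}=y$, giving, for every fixed control $u\in\mathcal{U}^{0}[0,\infty)$,
\[
|Y_{0}^{0,x,u}-Y_{0}^{0,y,u}|\leq\bar{C}_{\eta}(1+|x|+|y|)|x-y|.
\]

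Next I would propagate this uniform-in-$u$ bound through the infimum. For each $u\in\mathcal{U}^{0}[0,\infty)$ we have
\[
V(x)\leq Y_{0}^{0,x,u}\leq Y_{0}^{0,y,u}+\bar{C}_{\eta}(1+|x|+|y|)|x-y|,
\]
and since the left-hand side does not depend on $u$, taking the infimum over $u$ on the right yields
\[
V(x)\leq V(y)+\bar{C}_{\eta}(1+|x|+|y|)|x-y|.
\]
Interchanging the roles of $x$ and $y$ gives the reverse inequality, and combining the two produces the desired estimate $|V(x)-V(y)|\leq\bar{C}_{\eta}(1+|x|+|y|)|x-y|$.

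I do not anticipate a serious obstacle here: all the analytic work has already been carried out in Theorem \ref{le-dpp-3}, whose constant $\bar{C}_{\eta}$ is exactly the one appearing in the statement. The only points requiring a little care are that estimate (ii) holds for a single control $u$ simultaneously for $\xi=x$ and $\xi^{\prime}=y$ (so that the bound is genuinely uniform in $u$ and may be passed through the infimum), and that one works with the representation of $V$ as a \emph{deterministic} infimum over $\mathcal{U}^{0}[0,\infty)$ rather than with the a priori random essential infimum over $\mathcal{U}[0,\infty)$; both of these are guaranteed by Theorem \ref{the-dpp-11}.
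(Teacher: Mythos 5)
Your proposal is correct and coincides with the paper's own argument, which simply declares the result ``immediate from Theorems \ref{le-dpp-3} and \ref{the-dpp-11}''; you have merely written out the standard step of passing the uniform-in-$u$ estimate of Theorem \ref{le-dpp-3}(ii) through the deterministic infimum representation of $V$ and symmetrizing in $x$ and $y$. Nothing further is needed.
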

\begin{proof}
The proof is immediate from Theorems \ref{le-dpp-3} and  \ref{the-dpp-11}.
\end{proof}

\begin{theorem}
\label{the-dpp-1-1} For each $\xi\in L_{G}^{p}(\Omega_{t};\mathbb{R}^{n})$ with $p>2$, we have%
\[
V(\xi)=\underset{u\in\mathcal{U}[t, \infty)}{ess\inf}Y_{t}^{t,\xi,u} \ \ q.s.
\]
\end{theorem}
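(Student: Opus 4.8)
The plan is to show that the deterministic function $V$ from Theorem \ref{the-dpp-11}, evaluated at the random variable $\xi$, furnishes the essential infimum; that is, to verify that $V(\xi)$ satisfies the two defining conditions (i)--(ii) of Definition \ref{esssup}. First I would record that $V$ has quadratic growth: taking $s=t$ in Theorem \ref{le-dpp-3}(i) gives $|Y^{t,x,u}_t|\leq\bar C_\eta(1+|x|^2)$, so $|V(x)|\leq\bar C_\eta(1+|x|^2)$ and hence $V(\xi)\in L_G^2(\Omega_t)$ (by the growth of $V$ together with the integrability of $\xi$). It then remains to prove the lower-bound property $V(\xi)\leq Y^{t,\xi,u}_t$ q.s. for every $u\in\mathcal U[t,\infty)$, and the maximality property that any other lower bound is dominated by $V(\xi)$.

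The core reduction is to approximate $\xi\in L_G^p(\Omega_t;\mathbb{R}^n)$ by simple random variables $\xi_N=\sum_{i=1}^{m_N}x_i^N\mathbf 1_{A_i^N}$, where $\{A_i^N\}_i$ is a partition of $\Omega$ with $\mathbf 1_{A_i^N}\in L_G^2(\Omega_t)$, chosen so that $\xi_N\to\xi$ in $L_G^p$ and, along a subsequence, quasi-surely. For such piecewise-constant data the control problem decouples along the partition, thanks to the local (consistency) property of the $G$-FBSDE with infinite horizon, which follows from the uniqueness in Theorem \ref{HM3} exactly as in the proof of Theorem \ref{the-dpp-11}: for any $u\in\mathcal U[t,\infty)$ one has $\mathbf 1_{A_i^N}Y^{t,\xi_N,u}_t=\mathbf 1_{A_i^N}Y^{t,x_i^N,u}_t$ q.s., and for any choice of controls $u^i$ and $u^\varepsilon:=\sum_i\mathbf 1_{A_i^N}u^i\in\mathbb U[t,\infty)\subset\mathcal U[t,\infty)$ one has $Y^{t,\xi_N,u^\varepsilon}_t=\sum_i\mathbf 1_{A_i^N}Y^{t,x_i^N,u^i}_t$ q.s.

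Given this decoupling, both inequalities for $\xi_N$ reduce to the deterministic identity $V(x)=\underset{v\in\mathcal U[t,\infty)}{\text{ess}\inf}Y^{t,x,v}_t=\inf_{v\in\mathcal U^t[t,\infty)}Y^{t,x,v}_t$ of Theorem \ref{the-dpp-11}. Indeed, $Y^{t,x_i^N,u}_t\geq V(x_i^N)$ for every $u$, so multiplying by $\mathbf 1_{A_i^N}$ and summing yields $Y^{t,\xi_N,u}_t\geq V(\xi_N)$ q.s. Conversely, for each $\varepsilon>0$ I would pick $u^i\in\mathcal U^t[t,\infty)$ with $Y^{t,x_i^N,u^i}_t\leq V(x_i^N)+\varepsilon$ (possible since $Y^{t,x,v}_t$ is constant for $v\in\mathcal U^t[t,\infty)$ and $V(x)$ is their infimum), paste them into $u^\varepsilon$, and obtain $Y^{t,\xi_N,u^\varepsilon}_t\leq V(\xi_N)+\varepsilon$. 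Together these give $V(\xi_N)=\underset{u\in\mathcal U[t,\infty)}{\text{ess}\inf}Y^{t,\xi_N,u}_t$.

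Finally I would pass to the limit $N\to\infty$. Corollary \ref{le-dpp-4} gives $V(\xi_N)\to V(\xi)$ q.s. along the subsequence, while the stability estimate Theorem \ref{le-dpp-3}(ii), whose constant $\bar C_\eta$ is independent of the control, gives $|Y^{t,\xi,u}_t-Y^{t,\xi_N,u}_t|\leq\bar C_\eta(1+|\xi|+|\xi_N|)|\xi-\xi_N|\to 0$ uniformly in $u$. Feeding these into the two inequalities above produces $V(\xi)\leq Y^{t,\xi,u}_t$ q.s. for every $u$, which is condition (i), and, for each $\varepsilon>0$, an admissible $u^\varepsilon$ with $Y^{t,\xi,u^\varepsilon}_t\leq V(\xi)+\varepsilon$ q.s., which forces any lower bound $\eta$ to satisfy $\eta\leq V(\xi)$, i.e. condition (ii); hence $V(\xi)=\underset{u\in\mathcal U[t,\infty)}{\text{ess}\inf}Y^{t,\xi,u}_t$. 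I expect the main obstacle to lie in this second-through-fourth stage: justifying the local pasting property and carrying out the quasi-sure passage to the limit in the non-dominated $G$-setting, where essential infima need not exist a priori and no single dominating measure is available. This is precisely where the uniqueness in Theorem \ref{HM3} and the control-uniform Lipschitz constant in Theorem \ref{le-dpp-3}(ii) are indispensable.
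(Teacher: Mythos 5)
Your proposal is correct and follows essentially the same route as the paper's own proof: approximation of $\xi$ by simple random variables, the pasting/local property from the uniqueness of the $G$-FBSDE (as in Theorem \ref{the-dpp-11}), reduction to the deterministic identity of Theorem \ref{the-dpp-11} on each partition set, and a quasi-sure passage to the limit via Corollary \ref{le-dpp-4} and the control-uniform estimate of Theorem \ref{le-dpp-3}(ii). The only differences are presentational (you verify both defining conditions for the simple $\xi_N$ first and then take limits, while the paper carries the approximation error through both conditions directly), so no genuine gap arises.
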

\begin{proof}
The proof is similar to the one in \cite{HJ1}.
For readers' convenience, we shall give the sketch of the proof.

First, we claim that $V(\xi)\leq Y_{t}^{t,\xi,u}$ q.s. for each $u\in\mathcal{U}[t,\infty)$. Indeed, for any $\xi\in L_{G}^{p}(\Omega_{t};\mathbb{R}^{n})$ with $p>2$, there is  a sequence $\xi^{k}=\sum_{i=1}^{N_{k}}%
x_{i,k}\mathbf{1}_{A_{i,k}}$, $k=1,2,...,$ such that $\lim_{k\rightarrow\infty}|\xi-\xi^{k}|=0$ q.s. and
$
\lim_{k\rightarrow\infty}\mathbb{\hat{E}}[|\xi-\xi^{k}|^2]=0,
$
where $x_{i,k}\in\mathbb{R}^{n}$ and
$\{A_{i,k}\}_{i=1}^{N_{k}}$ is a $\mathcal{B}(\Omega_{t})$-partition of
$\Omega$.  By Corollary \ref{le-dpp-4}, we have%
\[
|V(\xi)-V(\xi^{k})|\leq \bar{C}_{\eta}(1+|\xi|+|\xi^k|)|\xi-\xi^{k}|.
\]
Recalling Lemma \ref{le-dpp-3}, we derive that
\begin{align}\label{myq121}
|Y_{t}^{t,\xi,u}-\sum_{i=1}^{N_{k}}\mathbf{1}_{A_{i,k}}Y_{t}^{t,x_{i,k},u}|
= \sum_{i=1}^{N_{k}}|Y_{t}^{t,\xi,u}-Y_{t}^{t,x_{i,k},u}|\mathbf{1}_{A_{i,k}}
\leq \bar{C}_{\eta}(1+|\xi|+|\xi^k|)|\xi-\xi^{k}|.
\end{align}
Note that \[
V(\xi^{k})=\sum_{i=1}^{N_{k}}\mathbf{1}_{A_{i,k}}V(x_{i,k})\leq \sum_{i=1}^{N_{k}}\mathbf{1}_{A_{i,k}}Y_{t}^{t,x_{i,k},u} \ \  q.s.
\]Consequently, sending $k\rightarrow\infty$ yields the desired result.

Next, suppose  $\eta\in L_{G}^{2}(\Omega_{t})$ satisfies that  $\eta\leq Y_{t}^{t,\xi,u}$  q.s. for each  $u\in\mathcal{U}[t,\infty)$.  Then it suffices to show that $\eta\leq
V(t,\xi)$ q.s.
By equation \eqref{myq121}, we deduce that for any $u\in\mathcal{U}[t,\infty)$,%
\[
\eta\leq\sum_{i=1}^{N_{k}}\mathbf{1}_{A_{i,k}}Y_{t}^{t,x_{i,k},u}+ \bar{C}_{\eta}(1+|\xi|+|\xi^k|)|\xi-\xi^{k}|\ \  q.s.,
\]
which together with Theorem \ref{the-dpp-11} indicate that for each $k$
\begin{align*}
\eta \leq V(t,\xi^{k})+ \bar{C}_{\eta}(1+|\xi|+|\xi^k|)|\xi-\xi^{k}| \ \ q.s.
\end{align*}
Letting $k\rightarrow\infty$, we obtain that $\eta\leq V(\xi)$ q.s. The proof is complete.
\end{proof}
\begin{remark}\label{myq8}
{\upshape
We remark that the above results also remain true for the stochastic control problem associated with
$\bar{Y}$. However,  the value function $\bar{V}$ depends on time variable $t$ in this case. Indeed, we have
$
\bar{V}(t,\xi)=\underset{u\in\mathcal{U}[t, \infty)}{ess\inf}\bar{Y}_{t}^{t,\xi,u} q.s.
$
}
\end{remark}
\section{Dynamic programming principle and related HJBI equation}
In this section, we shall establish the link between the value function $V$ and the corresponding
HJBI equation. The main tool is the
stochastic ``backward semigroup'' introduced by Peng  \cite{peng-dpp-1}.

For each $(t,x)\in [0,\infty)\times\mathbb{R}^n$,  positive real number $\delta$,
 $u\in\mathcal{U}[t,t+\delta]$ and $\eta\in L_{G}^{p}(\Omega_{t+\delta})$ with $p>2$, we
define the following backward semigroups:
\[
\mathbb{G}_{t,t+\delta}^{t,x,u}[\eta]:=\tilde{Y}_{t}^{t,t+\delta,x,u},
\]
where $(X_{s}^{t,x,u},\tilde{Y}_{s}^{t,t+\delta,x,u},\tilde{Z}_{s}^{t,t+\delta,x,u},\tilde{K}_{s}^{t,t+\delta,x,u})_{t\leq
s\leq t+\delta}$ is the solution of  the following type of $G$-FBSDEs in the interval $[t,t+\delta]$:%

\begin{align} \label{App1}
\begin{cases}
&X_{s}^{t,x,u}=x+\int^s_tb(X_{r}^{t,x,u},u_r)dr+\int^s_th_{ij}(X_{r}^{t,x,u},u_r)d\langle
B^i,B^j\rangle_{r}+\int^s_t\sigma(X_{r}^{t,x,u},u_r)dB_{r},\\&
\tilde{Y}_{s}^{t,t+\delta,x,u}   =\eta+\int_{s}^{t+\delta}f(X_{r}^{t,x,u}%
,\tilde{Y}_{r}^{t,t+\delta,x,u},\tilde{Z}_{r}^{t,t+\delta,x,u},u_r)dr-\int_{s}^{t+\delta}\tilde{Z}_{r}^{t,t+\delta,x,u}dB_{r}\\
& \ \ \ \ \ \ \ \ \ \ \ \ \ \ \ \ \ \ +\int_{s}^{t+\delta}g_{ij}(X_{r}^{t,t+\delta,x,u}%
,\tilde{Y}_{r}^{t,t+\delta,x,u},\tilde{Z}_{r}^{t,t+\delta,x,u},u_r)d\langle B^i,B^j\rangle_{r} \\ & \ \ \ \ \ \ \ \ \ \ \ \ \ \ \ \ \ \ -(\tilde{K}_{t+\delta}^{t,t+\delta,x,u}-\tilde{K}_{s}^{t,t+\delta,x,u}).
\end{cases}
\end{align}

Then we have the following dynamic programming principle.
\begin{theorem}
\label{dpp-2}   Assume \emph{(B1)-(B5)} hold.
Then for each $s>0$ and $x\in\mathbb{R}^{n}$, we have
\begin{equation}%
V(x)=  \underset{u\in\mathcal{U}[0,s]}{\inf}%
\mathbb{G}_{0,s}^{0,x,u}[V(X_{s}^{x,u})].
\label{DPP}%
\end{equation}
\end{theorem}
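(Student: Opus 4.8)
The plan is to prove the two inequalities $V(x)\leq \inf_{u\in\mathcal{U}[0,s]}\mathbb{G}_{0,s}^{0,x,u}[V(X_s^{x,u})]$ and $V(x)\geq \inf_{u\in\mathcal{U}[0,s]}\mathbb{G}_{0,s}^{0,x,u}[V(X_s^{x,u})]$ separately, relying on two structural facts about the backward semigroup. First, the \emph{flow property}: by uniqueness of the $G$-FBSDE (Theorem \ref{HM3}), for any $u\in\mathcal{U}[0,\infty)$ one has $Y_0^{x,u}=\mathbb{G}_{0,s}^{0,x,u}[Y_s^{x,u}]$, and moreover $Y_s^{x,u}=Y_s^{s,X_s^{x,u},u}$, since the solution on $[s,\infty)$ depends on the past only through the state $X_s^{x,u}$. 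Second, \emph{monotonicity}: by the comparison theorem for $G$-BSDEs (\cite{HJPS}), $\eta_1\geq \eta_2$ q.s. implies $\mathbb{G}_{0,s}^{0,x,u}[\eta_1]\geq \mathbb{G}_{0,s}^{0,x,u}[\eta_2]$ q.s.

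For the inequality $V(x)\geq \inf_{u}\mathbb{G}_{0,s}^{0,x,u}[V(X_s^{x,u})]$ I would fix an arbitrary $u\in\mathcal{U}[0,\infty)$ and restrict it to $[0,s]$, which lies in $\mathcal{U}[0,s]$. Since $X_s^{x,u}\in L_G^p(\Omega_s)$ with $p>2$ by Lemma \ref{myq.1}(ii), Theorem \ref{the-dpp-1-1} gives $V(X_s^{x,u})=\mathrm{ess}\inf_{v\in\mathcal{U}[s,\infty)}Y_s^{s,X_s^{x,u},v}\leq Y_s^{s,X_s^{x,u},u}=Y_s^{x,u}$ q.s. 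Combining the flow property with monotonicity then yields $Y_0^{x,u}=\mathbb{G}_{0,s}^{0,x,u}[Y_s^{x,u}]\geq \mathbb{G}_{0,s}^{0,x,u}[V(X_s^{x,u})]\geq \inf_{u'\in\mathcal{U}[0,s]}\mathbb{G}_{0,s}^{0,x,u'}[V(X_s^{x,u'})]$, and taking the infimum over $u$ gives the claim.

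The reverse inequality is the main obstacle, because in the $G$-framework one cannot select a single control attaining the essential infimum $V(X_s^{x,u})$, and $X_s^{x,u}$ is genuinely random. Here I would adapt the ``implied partition'' technique. Fix $u\in\mathcal{U}[0,s]$ and $\varepsilon>0$, and first approximate $X_s^{x,u}$ in $L_G^p$ by a simple random variable $\xi=\sum_i x_i\mathbf{1}_{A_i}$ with $\{A_i\}$ a $\mathcal{B}(\Omega_s)$-partition and $x_i\in\mathbb{R}^n$. For each deterministic $x_i$, Theorem \ref{the-dpp-11} expresses $V(x_i)=\inf_{v\in\mathcal{U}^s[s,\infty)}Y_s^{s,x_i,v}$ as a genuine infimum, so one may pick $v^i\in\mathcal{U}^s[s,\infty)$ with $Y_s^{s,x_i,v^i}\leq V(x_i)+\varepsilon$. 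Gluing these into $v=\sum_i\mathbf{1}_{A_i}v^i\in\mathbb{U}[s,\infty)$, uniqueness of the $G$-FBSDE gives $Y_s^{s,\xi,v}=\sum_i\mathbf{1}_{A_i}Y_s^{s,x_i,v^i}\leq V(\xi)+\varepsilon$ q.s. Concatenating $u$ on $[0,s]$ with $v$ on $[s,\infty)$ produces an admissible $\tilde u\in\mathcal{U}[0,\infty)$ with $X_s^{x,\tilde u}=X_s^{x,u}$, so that $V(x)\leq Y_0^{x,\tilde u}=\mathbb{G}_{0,s}^{0,x,u}[Y_s^{s,X_s^{x,u},v}]$.

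It then remains to pass from $\xi$ back to $X_s^{x,u}$ and control the errors. Using Theorem \ref{le-dpp-3}(ii) to bound $|Y_s^{s,X_s^{x,u},v}-Y_s^{s,\xi,v}|$ and Corollary \ref{le-dpp-4} to bound $|V(\xi)-V(X_s^{x,u})|$, both by $\bar{C}_{\eta}(1+|X_s^{x,u}|+|\xi|)|X_s^{x,u}-\xi|$, one obtains $Y_s^{s,X_s^{x,u},v}\leq V(X_s^{x,u})+\varepsilon+\rho$ q.s., where $\rho$ collects terms whose $L_G^2$-norm vanishes as $\xi\to X_s^{x,u}$ (via a Hölder argument against the integrable factor $1+|X_s^{x,u}|+|\xi|$). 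Applying monotonicity of $\mathbb{G}_{0,s}^{0,x,u}$ together with its Lipschitz dependence on the terminal condition (a consequence of Theorem \ref{pro3.5}) gives $V(x)\leq \mathbb{G}_{0,s}^{0,x,u}[V(X_s^{x,u})]+C(\varepsilon+\|\rho\|)$; letting $\xi\to X_s^{x,u}$, then $\varepsilon\to0$, and finally taking the infimum over $u\in\mathcal{U}[0,s]$ yields $V(x)\leq\inf_u\mathbb{G}_{0,s}^{0,x,u}[V(X_s^{x,u})]$. The delicate points I expect to fight with are the quasi-sure measurability and admissibility of the glued control $v$, and making the error term $\rho$ uniformly small under $\mathbb{G}_{0,s}^{0,x,u}$ despite the quadratic growth of $V$.
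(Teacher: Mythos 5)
Your first inequality ($V(x)\geq\inf_{u}\mathbb{G}_{0,s}^{0,x,u}[V(X_{s}^{x,u})]$) is correct and is exactly the paper's argument: Theorem \ref{the-dpp-1-1} gives $Y_{s}^{s,X_{s}^{x,u},u}\geq V(X_{s}^{x,u})$ q.s., and the comparison theorem for $G$-BSDEs transfers this through the backward semigroup. The problem is the reverse inequality, and the gap sits precisely at the point you flag at the end as a ``delicate point to fight with'': the admissibility of the glued control $v=\sum_{i}\mathbf{1}_{A_{i}}v^{i}$. By the paper's definition of $\mathbb{U}[s,\infty)$, such a gluing is admissible only if each $\mathbf{1}_{A_{i}}\in L_{G}^{2}(\Omega_{s})$. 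In the quasi-sure framework of \cite{DHP11}, elements of $L_{G}^{2}(\Omega_{s})$ are quasi-continuous, and indicators of general Borel sets are \emph{not} quasi-continuous; there is no dominating measure, so you cannot approximate $X_{s}^{x,u}$ by simple functions over an arbitrary $\mathcal{B}(\Omega_{s})$-partition and simultaneously keep the partition indicators inside $L_{G}^{2}(\Omega_{s})$. (Note the contrast with Theorem \ref{the-dpp-1-1}, where a Borel partition suffices because the \emph{same} control $u$ is used on every cell, so no gluing across cells is ever performed; your step genuinely needs different $\varepsilon$-optimal controls $v^{i}$ on different cells, and that is where admissibility breaks.) The natural candidate partition, by level sets of $X_{s}^{x,u}$ itself, does not work either, because $\sigma(x,u)$ may be degenerate, so the level sets $\{X_{s}^{x,u}\in[a,b)\}$ need not have quasi-continuous indicators.

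This obstruction is the reason the paper's proof of Lemma \ref{le-new-dpp2} has its two-step structure rather than the direct argument you propose. In Step 1 the coefficients are truncated at level $N$ on $[0,s)$ (with $1/N$-error estimates that require the new Lemma \ref{myq7}, since the truncated coefficients satisfy only (B2) and lose the dissipativity (B4) on the initial interval, making the auxiliary value function time-dependent). The truncation is not merely for integrability: boundedness is what allows Step 2 to introduce the perturbed auxiliary $G$-SDE \eqref{yw4}, whose diffusion coefficient is shifted by $(N+1)e\,dB_{r}^{1}$ and is therefore uniformly non-degenerate. Only then does Theorem 3.18 of \cite{HW1} apply to give $\mathbf{1}_{\{\tilde{X}_{s}^{x,v,N}\in[a,b)\}}\in L_{G}^{2}(\Omega_{s})$, producing an ``implied partition'' whose cells are admissible for gluing; the simple approximant $\xi^{k,u,N}$ and the glued $\varepsilon$-optimal control $\bar{u}^{k,N}\in\mathbb{U}(s,\infty)$ are built from these level sets, and the rest of your error analysis (Theorem \ref{le-dpp-3}(ii), Corollary \ref{le-dpp-4}, Theorem \ref{pro3.5}, then $k\to\infty$, $\varepsilon\to 0$, $N\to\infty$) then goes through essentially as you describe. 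So your skeleton is the right one, but without the truncation-plus-perturbation construction the gluing step has no admissible control to glue, and the proof does not close.
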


In order to prove it, we need the following lemma.
\begin{lemma}
\label{le-new-dpp2} Assume \emph{(B1)-(B5)} hold. Then for any $s>0$ and $x\in\mathbb{R}^{n}$, the following inequality holds true:%
\[
V(x)\leq\underset{u\in\mathcal{U}[0,s]}{\inf}\mathbb{G}%
_{0,s}^{0,x,u}[V(X_{s}^{x,u})].
\]
\end{lemma}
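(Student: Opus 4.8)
The plan is to fix an arbitrary $u\in\mathcal{U}[0,s]$ and establish the pointwise estimate $V(x)\leq\mathbb{G}_{0,s}^{0,x,u}[V(X_{s}^{x,u})]$; taking the infimum over $u\in\mathcal{U}[0,s]$ then gives the claim. The whole argument rests on two pillars: the concatenation/flow structure of the $G$-FBSDE together with the backward semigroup, and the identification of $V(X_{s}^{x,u})$ as an essential infimum via Theorem \ref{the-dpp-1-1}. Note first that the right-hand side is well defined, since by Corollary \ref{le-dpp-4} the function $V$ is continuous of quadratic growth, so $V(X^{x,u}_{s})\in L_{G}^{p}(\Omega_{s})$ for every $p$.

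First I would paste controls. For a tail control $v\in\mathcal{U}[s,\infty)$, let $w\in\mathcal{U}[0,\infty)$ be the concatenation equal to $u$ on $[0,s)$ and to $v$ on $[s,\infty)$. By uniqueness for the $G$-SDE \eqref{App1y} and the flow property, $X^{x,w}_{r}=X^{x,u}_{r}$ on $[0,s]$ and $X^{x,w}_{r}=X^{s,X^{x,u}_{s},v}_{r}$ on $[s,\infty)$; by uniqueness for the infinite horizon $G$-BSDE (Theorem \ref{HM3}) the same pasting holds for $Y$, so in particular $Y^{x,w}_{s}=Y^{s,X^{x,u}_{s},v}_{s}$. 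Restricting the infinite horizon solution to $[0,s]$ and using the definition of the semigroup yields $Y^{x,w}_{0}=\mathbb{G}_{0,s}^{0,x,u}[Y^{s,X^{x,u}_{s},v}_{s}]$. Since $V(x)=\inf_{w'\in\mathcal{U}[0,\infty)}Y^{x,w'}_{0}\leq Y^{x,w}_{0}$, I obtain $V(x)\leq\mathbb{G}_{0,s}^{0,x,u}[Y^{s,X^{x,u}_{s},v}_{s}]$ for every $v\in\mathcal{U}[s,\infty)$.

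Next I would pass from $v$ to the optimal tail. Because $x$ is deterministic, Lemma \ref{myq.1}(ii) gives $\xi:=X^{x,u}_{s}\in L_{G}^{p}(\Omega_{s})$ for every $p\geq 2$, so Theorem \ref{the-dpp-1-1} applies with $t=s$ and yields $V(\xi)=\underset{v\in\mathcal{U}[s,\infty)}{ess\inf}Y^{s,\xi,v}_{s}$ q.s. I would check that the family $\{Y^{s,\xi,v}_{s}:v\in\mathcal{U}[s,\infty)\}$ is downward directed: for $v^{1},v^{2}$, setting $A=\{Y^{s,\xi,v^{1}}_{s}\leq Y^{s,\xi,v^{2}}_{s}\}\in\mathcal{F}_{s}$ and $v^{3}=\mathbf{1}_{A}v^{1}+\mathbf{1}_{A^{c}}v^{2}\in\mathcal{U}[s,\infty)$, the local property of $G$-BSDE solutions gives $Y^{s,\xi,v^{3}}_{s}=Y^{s,\xi,v^{1}}_{s}\wedge Y^{s,\xi,v^{2}}_{s}$. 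Hence the essential infimum is the q.s. decreasing limit of a sequence $Y^{s,\xi,v^{n}}_{s}\downarrow V(\xi)$ with $v^{n}\in\mathcal{U}[s,\infty)$. Applying the stability estimate of Theorem \ref{pro3.5} to the two $G$-BSDEs on $[0,s]$ defining $\mathbb{G}_{0,s}^{0,x,u}[Y^{s,\xi,v^{n}}_{s}]$ and $\mathbb{G}_{0,s}^{0,x,u}[V(\xi)]$ (same forward state $X^{x,u}$ and same generator, so the driver difference vanishes) gives $|\mathbb{G}_{0,s}^{0,x,u}[Y^{s,\xi,v^{n}}_{s}]-\mathbb{G}_{0,s}^{0,x,u}[V(\xi)]|^{2}\leq C\mathbb{\hat{E}}[|Y^{s,\xi,v^{n}}_{s}-V(\xi)|^{2}]$. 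The quadratic bound $|Y^{s,\xi,v^{n}}_{s}|\leq\bar{C}_{\eta}(1+|\xi|^{2})$ from Theorem \ref{le-dpp-3}(i), with $\xi$ possessing all moments, provides an $L_{G}^{2}$ dominating function, so the monotone q.s. convergence upgrades to $\mathbb{\hat{E}}[|Y^{s,\xi,v^{n}}_{s}-V(\xi)|^{2}]\to 0$ by dominated convergence under $\mathbb{\hat{E}}$. Letting $n\to\infty$ in $V(x)\leq\mathbb{G}_{0,s}^{0,x,u}[Y^{s,\xi,v^{n}}_{s}]$ then yields $V(x)\leq\mathbb{G}_{0,s}^{0,x,u}[V(X^{x,u}_{s})]$, and the infimum over $u$ finishes the proof.

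I expect the main obstacle to be the middle step: constructing the minimizing sequence for the essential infimum within $\mathcal{U}[s,\infty)$ in the $G$-framework, where there is no dominating probability measure. Its justification hinges on the downward directedness of the value family, which in turn relies on the local property of $G$-BSDE solutions and on the fact that controls pasted along an $\mathcal{F}_{s}$-measurable set remain admissible. A secondary technical point is the passage to the limit, i.e. upgrading q.s. monotone convergence to $L_{G}^{2}$-convergence; here the quadratic growth estimates of Theorem \ref{le-dpp-3} and the full integrability of $X^{x,u}_{s}$ (since $x$ is deterministic) are what make dominated convergence under $\mathbb{\hat{E}}$ applicable.
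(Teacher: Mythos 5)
Your first step is sound and matches the paper's own endgame: pasting $u\in\mathcal{U}[0,s]$ with a tail control $v\in\mathcal{U}[s,\infty)$, using uniqueness/flow for \eqref{App1y}--\eqref{App1m} and the backward semigroup, correctly yields $V(x)\leq \mathbb{G}^{0,x,u}_{0,s}[Y^{s,X^{x,u}_s,v}_s]$ for every $v$. The genuine gap is your middle step, where you pass to $V(x)\le\mathbb{G}^{0,x,u}_{0,s}[V(X^{x,u}_s)]$ by extracting a q.s.-decreasing minimizing sequence from the essential infimum via downward directedness. This is precisely where the classical argument breaks in the $G$-framework, for two reasons. First, directedness itself: for $A=\{Y^{s,\xi,v^1}_s\le Y^{s,\xi,v^2}_s\}$ the indicator $\mathbf{1}_A$ is Borel but in general \emph{not} quasi-continuous, hence not in $L^2_G(\Omega_s)$, so $v^3=\mathbf{1}_Av^1+\mathbf{1}_{A^c}v^2$ need not belong to $M^2_G(s,T;\mathbb{R}^m)$ and thus need not be admissible; the pasting/local property of $G$-BSDE solutions is only available when the indicators lie in $L^2_G(\Omega_s)$ (this is exactly why the paper's class $\mathbb{U}[t,T]$ requires $\mathbf{1}_{A_i}\in L^2_G(\Omega_t)$). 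Second, even granting directedness, extracting a q.s.-convergent minimizing sequence from an essential infimum is a theorem about dominated models: the standard proof minimizes $E_P[\arctan(\cdot)]$ under a single reference measure and uses that $X\le Y$ a.s. together with $E_P[X]=E_P[Y]$ forces $X=Y$ a.s.; under $\hat{\mathbb{E}}=\sup_{P\in\mathcal{P}}E_P$ with mutually singular measures both ingredients fail, and Definition \ref{esssup} provides no such construction. (Your appeal to ``dominated convergence under $\hat{\mathbb{E}}$'' is likewise unavailable for mere q.s. convergence, though for a monotone sequence of quasi-continuous variables that particular step could be repaired; it is the existence of the sequence that cannot be established your way.)

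The paper circumvents this obstruction by a different mechanism, and any correct proof needs some substitute of this kind. In Step 1 it truncates the coefficients to reduce to bounded $b^N,h^N,\sigma^N$; in Step 2 it runs the ``implied partition'' argument: the auxiliary SDE \eqref{yw4} with the extra non-degenerate noise $(N+1)e\,dB^1$ guarantees, via Theorem 3.18 of \cite{HW1}, that $\mathbf{1}_{\{\tilde X^{x,v,N}_s\in[a,b)\}}\in L^2_G(\Omega_s)$, so $X^{x,u,N}_s$ can be approximated by simple functions $\xi^{k}=\sum_i x_{i,k}\mathbf{1}_{A_{i,k}}$ whose partition indicators \emph{are} quasi-continuous. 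Near-optimal tail controls chosen for the deterministic points $x_{i,k}$ can then legitimately be pasted along this partition into a single admissible $\bar u^{k,N}\in\mathbb{U}(s,\infty)$ satisfying $Y^{s,\xi^{k},\bar u^{k,N},N}_s\le V(\xi^{k})+\varepsilon$, and the stability and Lipschitz estimates (Theorem \ref{pro3.5}, Theorem \ref{le-dpp-3}, Corollary \ref{le-dpp-4}) convert this into the desired inequality before letting $k\rightarrow\infty$, $\varepsilon\rightarrow 0$ and $N\rightarrow\infty$. Your proposal captures the correct pasting/flow skeleton but omits the quasi-continuity machinery that is the actual mathematical content of this lemma.
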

\begin{proof} The proof will be divided into the following two steps.

{\bf Step 1:}
For each fixed $N>0$, we set $b^{i_{1},N}=(b^{i_{1}}\wedge N)\vee(-N)$,
$h_{ij}^{i_{1},N}=(h_{ij}^{i_{1}}\wedge N)\vee(-N)$, $\sigma_{i_{1}i_{2}}%
^{N}=(\sigma_{i_{1}i_{2}}\wedge N)\vee(-N)$ for $i_{1}\leq n$, $i_{2}\leq d$
and $b^{N}=(b^{1,N},\ldots,b^{n,N})^{\top}$, $h_{ij}^{N}=(h_{ij}^{1,N}%
,\ldots,h_{ij}^{n,N})^{\top}$, $\sigma^{N}=(\sigma_{i_{1}i_{2}}^{N})$.  Then we define $\bar{b}^N(t,x,u)=b^N(x,u)\mathbf{1}_{[0,s)}(t)+b(x,u)\mathbf{1}_{[s,\infty)}(t)$. Similarly, we can define $\bar{h}^N$ and
$\bar{\sigma}^N$. Note that in general  ${b}^N, {h}^N $ and ${\sigma}^N$ only satisfy  assumption (B2).

By Remark \ref{myq8}, we derive that
\[
V^{N}(t,x):=\underset{u\in\mathcal{U}[t,\infty)}{ess\inf}Y_{t}^{t,x,u,N}=\underset{u\in\mathcal{U}^{t}[t,\infty)}{\inf}Y_{t}^{t,x,u,N}.\]
Note that $V^N(t,x)=V(x)$ for $t\geq s.$ We claim that for any $s\geq 0$ and $x\in\mathbb{R}^{n}$,%
\begin{equation}
V^{N}(0,x)\leq\underset{u\in\mathcal{U}[0,s]}{\inf}\mathbb{G}%
_{0,s}^{0,x,u,N}[V^{N}(s,X_{s}^{x,u,N})]=\underset{u\in\mathcal{U}[0,s]}{\inf}\mathbb{G}%
_{0,s}^{0,x,u,N}[V(X_{s}^{x,u,N})], \label{eq-dpp-2}%
\end{equation}
where $\mathbb{G}_{t,s}^{t,x,u,N}[\cdot]$ is defined in the same way as $\mathbb{G}%
_{t,s}^{t,x,u}[\cdot]$. The proof will be given in the next step.

Note that there exists a constant $C_{2}>0$ (may vary from line to line) depending on  $L$, $s$, $n$, $U$, $G$, $\alpha_1,$ $\alpha_2$ and $\eta$,  such that for any $u\in\mathcal{U}[0,\infty)$,%
\begin{align*}
|Y_{0}^{x,u,N}-Y_{0}^{x,u}| \leq\frac{C_{2}(1+|x|^{3})}{N}.
\end{align*}
 Indeed,
applying Lemma \ref{myq7},  we get for any
$u\in\mathcal{U}[0,\infty)$, $t\geq s$,
\begin{align*}
\mathbb{\hat{E}}^{\tilde{G}}[|X_{t}^{x,u,N}-X_{t}^{x,u}|^2\Gamma^0_t]  &  \leq \exp(-2\bar{\eta}(t-s))
\mathbb{\hat{E}}^{\tilde{G}}[|X_{s}^{x,u,N}-X_{s}^{x,u}|^2\Gamma^0_s].
\end{align*}
By Lemma \ref{myq.1}, H\"{o}lder's inequality and a standard argument (see, e.g., Lemma 24 in \cite{HJ1}), for each $p\geq 1$, there exists a constant
$C_{s,p}>0$ depending on $s$, $p$, $n$, $U$, $G$, $\alpha_1$ and $L$ such that for any
$u\in\mathcal{U}[0,\infty)$, $r\in\lbrack 0,s]$,
\begin{align}\label{myq200}
\mathbb{\hat{E}}^{\tilde{G}}[|X_{r}^{x,u,N}-X_{r}^{x,u}|^{2p}\Gamma^0_r]  & \leq\frac{C_{s,p}\exp(-2\bar{\eta}r)(1+|x|^{4p})}{N^{2p}}.
\end{align}

Therefore, we obtain that \[
\mathbb{\hat{E}}^{\tilde{G}}[|X_{t}^{x,u,N}-X_{t}^{x,u}|^2\Gamma^0_t]\leq  \frac{C_{s,1} \exp(-2\bar{\eta}t)(1+|x|^{4})}{N^2}.
\]
Then by a similar analysis as in  Lemma \ref{le-dpp-3},
we can obtain  the desired result. Consequently,%
\begin{align*}
|V^{N}(0,x)-V(0,x)|\leq\underset{u\in\mathcal{U}[0,\infty)}{\sup}|Y_{0}^{x,u,N}-Y_{0}^{x,u}|\leq\frac{C_{2}(1+|x|^{3})}{N}.%
\end{align*}

From Theorem \ref{pro3.5},  Corollary \ref{le-dpp-4} and inequality \eqref{myq200}, we have for any
$u\in\mathcal{U}[0,s]$,
\begin{align*}
&  |\mathbb{G}_{0,s}^{0,x,u,N}[V(X_{s}^{x,u,N})]-\mathbb{G}%
_{0,s}^{0,x,u}[V(X_{s}^{x,u})]|^{2}\\
&  \leq C_{2} \mathbb{\hat{E}}[|V(X_{s}^{x,u,N})-V(X_{s}%
^{x,u})|^{2}+\int_{0}^{s}(1+|X_{t}^{x,u,N}|^2+|X_{t}^{x,u}|^2)|X_{t}^{x,u,N}-X_{t}^{x,u}|^{2}dt]\\&
\leq \frac{C_{2}(1+|x|^{6})}{N^{2}} .
\end{align*}
Thus%
\[
|\underset{u\in\mathcal{U}[0,s]}{\inf}\mathbb{G}_{0,s}%
^{0,x,u,N}[V(X_{s}^{x,u,N})]-\underset{u\in\mathcal{U}%
[0,s]}{\inf}\mathbb{G}_{0,s}^{0,x,u}[V(X_{s}^{x,u})]|\leq\frac
{C_{2}(1+|x|^{3})}{N}.
\]
Sending $N\rightarrow
\infty$ in inequality (\ref{eq-dpp-2}), we get the desired result.

{\bf Step 2:} Now we shall complete the proof of equation \eqref{eq-dpp-2}. The main idea is from the Lemma 22 in \cite{HJ1} and we shall only give the sketch of the proof.
For each $\varepsilon>0$, there exists a $u\in\mathcal{U}[0,s]$ such
that%
\begin{align}\label{yw6}
\mathbb{G}_{0,s}^{0,x,u,N}[V(X_{s}^{x,u,N})]-\varepsilon\leq\underset{v\in\mathcal{U}[0,s]}{\inf}\mathbb{G}_{0,s}^{0,x,v,N}[V(X_{s}^{x,v,N})].
\end{align}

Now consider the following SDE: for
any $v\in\mathcal{U}[0,s]$,
\begin{align}
\begin{cases}\label{yw4}
& d\tilde{X}_{r}^{x,v,N}= b^N(\tilde{X}_{r}^{x,v,N}-\tilde{X}^N_{r}%
e,v_{r})dr+h^N_{ij}(\tilde{X}_{r}^{x,v,N}-\tilde{X}_{r}^Ne,v_{r})d\langle
B^{i},B^{j}\rangle_{r}\\
& \ \ \ \ \ \ \ \ \ \  \ \ \ \ \ \ \ \ \ \ +\sigma^N(\tilde{X}_{r}^{x,v,N}-\tilde{X}^N_{r}e,v_{r})dB_{r}+(N
+1)edB_{r}^{1},\\
&d\tilde{X}_{r}^N=  (N+1)dB_{r}^{1},\\
&\tilde{X}_{0}^{x,v,N}=  x,\text{ }\tilde{X}^N_{0}=0,\text{ }r\in\lbrack 0,s],
\end{cases}
\end{align}
where $e=[1,\ldots,1]^{\top}\in\mathbb{R}^{n}$ and $B^1$ is the first component of $G$-Brownian motion $B$. By the uniqueness of $G$-SDE, one can easily check that
\[
\tilde{X}_{r}^{x,v,N}=X_{r}^{x,v,N}+(N+1)B_{r}^{1}e,\text{
}\tilde{X}_{r}^N=(N+1)B_{r}^{1},\text{ }r\in\lbrack 0,s]
\]
is the solution to equation \eqref{yw4}. Note that $b^N, h^N$ and $\sigma^N$ are bounded and \[
\sqrt{|\sigma^N_{i1}+N+1|^{2}+|\sigma_{i2}^N|^{2}+\cdots+|\sigma_{id}^N|^{2}%
}\geq1.
\]
Thus applying Theorem 3.18 in \cite{HW1} yields that $\mathbf{1}_{\{ \tilde{X}_{s}^{x,v,N}\in\lbrack
a,b)\}}\in L_{G}^{2}(\Omega_{s})$ for any $a$, $b
\in\mathbb{R}^{n}$ with $a\leq b$.

Then by the same way as in Lemma 22 in \cite{HJ1} and Lemma \ref{myq.1}, for each $k\geq 1$ we can find a simple function $\xi^{k,u,N}\in L^2_G(\Omega_s)$  and an admissible control $\bar{u}^{k,N}\in\mathbb{U}(s,\infty)$ so that
\begin{align}
\mathbb{\hat{E}}[|X_{s}^{x,u,N}-\xi^{k,u,N}|^{4}]  &  \leq\frac{C_{2,N}(1+|x|^{8})}{k^{4}},\nonumber\\
V(\xi^{k,u,N})\leq Y_{s}^{s,\xi^{k,u,N},\bar{u}^{k,N},N}& \leq V(\xi^{k,u,N}%
)+\varepsilon,\label{yw5}
\end{align}
where $C_{2,N}$ is a constant (may vary from line to line) depending on $x,N,s,G,u, n$, $\alpha_1, \alpha_2, \bar{\eta}$ and $L$.
Consequently, applying Theorem \ref{le-dpp-3} (ii), equation \eqref{yw5} and H\"{o}lder's inequality yields that
\begin{align*}
&\mathbb{\hat{E}}[|Y_{s}^{s,X_{s}^{x,u,N},\bar{u}^{k,N},N}-V(X_{s}^{x,u,N})|^2]\\
& \ \leq 2(\mathbb{\hat{E}}[|Y_{s}^{s,X_{s}^{x,u,N},\bar{u}^{k,N},N}-Y_{s}^{s,\xi^{k,u,N}%
,\bar{u}^{k,N},N}|^2]+
\mathbb{\hat{E}}[|Y_{s}^{s,\xi^{k,u,N}%
,\bar{u}^{k,N},N}-V(X_{s}^{x,u,N})|^2])\\
& \
\leq 2(\bar{C}_{\eta}\mathbb{\hat{E}}[(1+|X_{s}^{x,u,N}|+|\xi^{k,u,N}|)^2|X_{s}^{x,u,N}-\xi^{k,u,N}|^2]+
\mathbb{\hat{E}}[(|V(\xi^{k,u,N})-V(X_{s}^{x,u,N})|+\varepsilon)^2])\\
& \
\leq C_{2,N}\mathbb{\hat{E}}[(1+|X_{s}^{x,u,N}|+|\xi^{k,u,N}|)^4]^{\frac{1}{2}}\mathbb{\hat{E}}[|X_{s}^{x,u,N}-\xi^{k,u,N}|^4]^{\frac{1}{2}}+C_{2,N}\varepsilon^2\\& \
\leq\frac{C_{2,N}(1+|x|^{6})}{k^2}+C_{2,N}\varepsilon^2,
\end{align*}
which together with Theorem \ref{pro3.5} imply that
\begin{align}\label{yw3}
|\mathbb{G}_{0,s}^{0,x,u,N}[Y_{s}^{s,X_{s}^{x,u,N},\bar{u}^{k,N},N}%
]-\mathbb{G}_{0,s}^{0,x,u,N}[V(X_{s}^{x,u,N})]|
\leq\frac{C_{2,N}(1+|x|^{3})}{k}+C_{2,N}\varepsilon.
\end{align}

Then we denote $\tilde{u}^N(r)=u(r)\mathbf{1}_{[0,s]}(r)+\bar{u}^{k,N}(r)\mathbf{1}_{(s,\infty)}(r)$, which belongs to $\mathcal{U}[0,\infty)$. Thus by the
definition of $V^N$,
\[
V^N(0,x)\leq Y_{0}^{x,\tilde{u}^N,N}=\mathbb{G}_{0,s}^{0,x,u,N}[Y_{s}^{s,X_{s}%
^{x,u,N},\bar{u}^{k,N},N}],
\]
which together with equations \eqref{yw6} and \eqref{yw3} imply that
\[
V^N(0,x)-\frac{{C_{2,N}(1+|x|^{3})}}{k}-C_{2,N}\varepsilon
\leq\underset{v\in\mathcal{U}[0,s]}{\inf}\mathbb{G}_{0,s}%
^{0,x,v,N}[V(X_{s}^{x,v,N})].
\]
Sending $k\rightarrow\infty$ and then letting $\varepsilon\rightarrow0$ in the above inequality yield the desired result.
\end{proof}
\begin{remark}{\upshape Note that the method of \cite{HJ1} cannot be directly applied to deal with the  above question, since the set of admissible controls is  more complicated in our setting.
Thus we introduce a new version of ``implied partition''.
}
\end{remark}

Now we are ready to present  the proof of Theorem \ref{dpp-2}.

\begin{proof}[The proof of   Theorem \ref{dpp-2}]
 By Theorem \ref{the-dpp-1-1}, we obtain for any $u\in\mathcal{U}[0,\infty)$,
\[
Y_{s}^{s,X_{s}^{x,u},u}\geq V(X_{s}^{x,u})\; \ {q.s.,}%
\]
where $Y_{s}^{s,X_{s}^{x,u},u}=Y_{s}^{x,u}$ is the solution of equation
\eqref{App1m}. Then, by the comparison theorem of $G$-BSDE,
we obtain that $
Y_{0}^{x,u}\geq\mathbb{G}_{0,s}^{0,x,u}[V(X_{s}^{x,u})] \ {q.s.},
$
which concludes that
\[
V(x)\geq\underset{u\in\mathcal{U}[0,s]}{\inf}%
\mathbb{G}_{0,s}^{0,x,u}[V(X_{s}^{x,u})].
\]
Recalling Lemma \ref{le-new-dpp2}, we can obtain that
\[
V(x)=\underset{u\in\mathcal{U}[0,s]}{\inf}%
\mathbb{G}_{0,s}^{0,x,u}[V(X_{s}^{x,u})],
\]
which ends the proof.
\end{proof}

Next, we shall prove the value function $V$ is the viscosity solution to the related HJBI equation. Note that  $H$ in equation \eqref{feynman} is not uniformly continuous in $(x,p,A)$, which is different from the ones in \cite{CMI} (see also \cite{N1, PE}).
Thus we introduce a probabilistic method to treat the uniqueness problem of viscosity solutions.

\begin{theorem}
\label{viscosity}  Assume \emph{(B1)-(B5)} hold.   Then $V$ is the unique  viscosity
solution  of the following HJBI equation with quadratic growth:
\begin{align}\label{feynman}
\underset{u\in U}{\inf}[G(H(x,V,\partial_{x}%
V,\partial_{xx}^{2}V,u))+\langle \partial_{x}%
V,b(x,u)\rangle+f(x,V,\partial_{x}%
V\sigma(x,u),u)]=0,
\end{align}
where%
\[%
\begin{array}
[c]{cl}%
H_{ij}(x,v,p,A,u)= & (\sigma^{\top}(x,u)A\sigma(x,u))_{ij}+2\langle
p,h_{ij}(x,u)\rangle
 +2g_{ij}(x,v,p\sigma(x,u),u)
\end{array}
\]
for any $(x,v,p,A,u)\in\mathbb{R}^{n}\times\mathbb{R}%
\times\mathbb{R}^{n}\times\mathbb{S}({n})\times U$.
\end{theorem}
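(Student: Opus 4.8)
The plan is to prove two things: that $V$ solves \eqref{feynman} in the viscosity sense, and that it is the only such solution of quadratic growth. For existence I would use the dynamic programming principle of Theorem \ref{dpp-2}, the backward semigroup $\mathbb{G}$, and the $G$-It\^o formula (Theorem \ref{Thm6.5}); for uniqueness I would establish a comparison principle tailored to the quadratic growth of the solutions and the merely local continuity of $H$, following the probabilistic method of \cite{HW}.

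The decisive step for the viscosity property is the infinitesimal generator computation: for $\varphi\in C^2$ of quadratic growth and a constant control $u\in U$,
\[
\lim_{\delta\downarrow0}\frac1\delta\Big(\mathbb{G}^{0,x,u}_{0,\delta}[\varphi(X^{x,u}_\delta)]-\varphi(x)\Big)=F(x,\varphi,D\varphi,D^2\varphi,u),
\]
where $F(x,v,p,A,u):=G(H(x,v,p,A,u))+\langle p,b(x,u)\rangle+f(x,v,p\sigma(x,u),u)$. I would prove this by setting $\hat Y_s=\tilde Y^{u}_s-\varphi(X^{x,u}_s)$ and applying Theorem \ref{Thm6.5} to $\varphi(X^{x,u})$, so that $\hat Y$ solves a $G$-BSDE with null terminal value whose $dr$-coefficient is $\langle D\varphi,b\rangle+f$ and whose $d\langle B^i,B^j\rangle$-coefficient is $\langle D\varphi,h_{ij}\rangle+\tfrac12(\sigma^\top D^2\varphi\,\sigma)_{ij}+g_{ij}=\tfrac12 H_{ij}$. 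Freezing these coefficients at $x$ and controlling the error through Theorem \ref{pro3.5} and Lemma \ref{myq.1}, the frozen value equals $\delta(\langle D\varphi,b\rangle+f)+2G(\tfrac12 H)\delta$ up to $o(\delta)$, and by positive homogeneity $2G(\tfrac12 H)=G(H)$, which produces $F$.

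Granting this limit, the sub/supersolution inequalities follow from the DPP. If $V-\varphi$ attains a local maximum at $x_0$ with $V(x_0)=\varphi(x_0)$, I would first replace $\varphi$ by a $C^2$ function of quadratic growth agreeing with $\varphi$ near $x_0$ and dominating $V$ everywhere (possible by the bound of Corollary \ref{le-dpp-4}); then, writing $\mathbb{G}^{u}_{0,\delta}$ for the backward semigroup on $[0,\delta]$ started at $x_0$, Theorem \ref{dpp-2} and the comparison theorem for $G$-BSDEs give $\varphi(x_0)=V(x_0)=\inf_u\mathbb{G}^{u}_{0,\delta}[V(X_\delta)]\le\inf_u\mathbb{G}^{u}_{0,\delta}[\varphi(X_\delta)]$, so that $0\le\inf_{u\in\mathcal U[0,\delta]}\big(\mathbb{G}^{u}_{0,\delta}[\varphi(X_\delta)]-\varphi(x_0)\big)$. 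Dividing by $\delta$ and interchanging $\inf_u$ with $\lim_{\delta\to0}$ (justified by the compactness of $U$, the continuity of the coefficients, and the uniform-in-$u$ estimates of Lemma \ref{myq.1}), I obtain $\inf_{u\in U}F(x_0,\varphi,D\varphi,D^2\varphi,u)\ge0$. The reverse inequality at a local minimum is entirely symmetric, so $V$ is a viscosity solution.

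The hard part is uniqueness, where I would prove a comparison principle for \eqref{feynman} in the class of quadratic-growth functions. Because $H$ is only locally uniformly continuous in $(x,p,A)$ while solutions grow quadratically, the standard doubling of variables with quadratic penalization does not close directly. Following \cite{HW}, I would introduce a weight or change of variables exploiting the strong dissipativity conditions (B3)--(B5) (in particular $\bar\eta>0$ and the strict monotonicity $-\mu$ in $y$) to renormalize the problem, so that along the doubled variables the quadratic terms are absorbed and Ishii's lemma becomes applicable, the dissipativity supplying the strict sign needed to dominate the error generated by the non-uniform modulus of $H$. Once comparison holds, any two quadratic-growth viscosity solutions coincide, and since $V$ is one of them by the first part, it is the unique one. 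I expect the control of the quadratic growth inside the doubling argument --- reconciling Ishii's lemma with unbounded coefficients while only local continuity of $H$ is available --- to be the principal obstacle, and precisely the place where (B3)--(B5) must be used quantitatively.
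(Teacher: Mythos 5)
Your existence half is fine in outline and is essentially what the paper does: it invokes the DPP of Theorem \ref{dpp-2} and proves the sub/supersolution inequalities via the backward semigroup and the generator computation, exactly as in Theorem 26 of \cite{HJ1} (the generator limit with $2G(\tfrac12 H)=G(H)$, the touching test function, the interchange of $\inf$ and $\lim$). The genuine gap is in the uniqueness half, which is the part of this theorem that is actually new.

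For uniqueness you propose an analytic comparison principle: doubling of variables with a weight, Ishii's lemma, and dissipativity ``used quantitatively'' to absorb the quadratic growth and the merely local modulus of $H$. But you never exhibit the weight or the penalization, and you yourself flag this as ``the principal obstacle''; that is not a proof, and it is precisely the route the paper was designed to avoid. The introduction stresses that the known analytic uniqueness results for elliptic equations in $\mathbb{R}^n$ (\cite{CMI}, \cite{N1}, \cite{PE}) require either uniform continuity of $H$ or additional boundedness/structure conditions, none of which hold here. Moreover, the method of \cite{HW} that you cite as your model is not a doubling argument at all --- it is probabilistic, and the paper follows it. The paper's actual proof runs as follows: if $\tilde V$ is any quadratic-growth viscosity solution of \eqref{feynman}, then, being time-independent, it is a viscosity solution of the \emph{parabolic} PDE \eqref{myq2} on each $[0,T]$ with terminal datum $\tilde V$ itself; by the finite-horizon theory of the appendix (Theorem \ref{my8} and Lemma \ref{myy1}, whose uniqueness statement is classical and quoted from \cite{BJ}), $\tilde V$ must coincide with the finite-horizon value function with that terminal datum, so that $\tilde V(x)=\inf_{u\in\mathcal{U}[0,t]}\mathbb{G}^{0,x,u}_{0,t}[\tilde V(X^{x,u}_t)]$ for every $t>0$. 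Comparing this with the DPP representation $V(x)=\inf_{u\in\mathcal{U}[0,t]}\mathbb{G}^{0,x,u}_{0,t}[V(X^{x,u}_t)]$ and using the estimate from the proof of Theorem \ref{le-dpp-3} (inequality \eqref{myq111}): since $V$ and $\tilde V$ both have quadratic growth, the dissipativity (B3)--(B5) forces $|\mathbb{G}^{0,x,u}_{0,t}[\tilde V(X^{x,u}_t)]-\mathbb{G}^{0,x,u}_{0,t}[V(X^{x,u}_t)]|\leq l(1+|x|^2)\exp(-\mu t)$ uniformly in $u$, because the linearized weight $\tilde\Gamma$ decays like $\exp(-\mu t)$ while $\mathbb{\hat{E}}^{\tilde G}_0[|X_t|^2\Gamma_t]$ stays bounded in $t$ by Lemma \ref{HW2}. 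Letting $t\rightarrow\infty$ gives $V=\tilde V$ with no comparison principle for the elliptic equation ever being proved. In short: the exponential forgetting supplied by (B3)--(B5) is exploited through the stochastic representation, not inside a doubling argument, and your proposal leaves unproved exactly the step on which it relies.
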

\begin{proof}
From Theorem \ref{dpp-2}, we can prove that $V$ is a viscosity solution of equation \eqref{feynman}  in the same way as in Theorem 26 of \cite{HJ1}.
Next we shall give the uniqueness of viscosity solution of equation (\ref{feynman}).

Suppose $\tilde{V}$ is also a viscosity solution of equation \eqref{feynman} with quadratic growth.
 For each $T>0$, it is easy to check that $\tilde{V}$ is a viscosity solution of the following fully nonlinear parabolic PDE:
 \begin{equation}\label{myq2}
\left\{
\begin{array}
[c]{l}%
\partial_{t}v+ \underset{u\in U}{\inf}[G(H(x,v,\partial_{x}%
v,\partial_{xx}^{2}v,u))+\langle \partial_{x}%
v,b(x,u)\rangle+f(x,v,\partial_{x}%
v\sigma(x,u),u)]=0,\\
v(T,x)=\tilde{V}(x).
\end{array}
\right.
\end{equation}

Then it follows from that the uniqueness of viscosity solution to parabolic PDE \eqref{myq2},  Theorem \ref{my8} and Lemma \ref{myy1} that for each $t\geq 0$, $$\tilde{V}(x)=\underset{u\in\mathcal{U}[0,t]}{\inf}%
\mathbb{G}_{0,t}^{0,x,u}[\tilde{V}(X_{t}^{x,u})].$$

By the proof of Lemma \ref{le-dpp-3} (see inequality \eqref{myq111}), we can find some constant $l$ independent of $t$ so that for each $u\in\mathcal{U}[0,t]$,
\[
|\mathbb{G}_{0,t}^{0,x,u}[\tilde{V}(X_{t}^{x,u})]-\mathbb{G}_{0,t}^{0,x,u}[V(X_{t}^{x,u})]|
\leq l(1+|x|^2)\exp(-\mu t).
\]
Then for each $t\geq 0$, we have
\[
|\tilde{V}(x)-{V}(x)|\leq l(1+|x|^2)\exp(-\mu t).
\]
Letting $t\rightarrow\infty$, we get that $V(x)=\tilde{V}(x)$ for each $x\in\mathbb{R}^n$. The proof is complete.
\end{proof}
\begin{remark}{\upshape
Remark that   \cite{RZ} recently established the well-posedness  of viscosity solutions  of fully nonlinear elliptic path-dependent PDEs under some uniformly continuous conditions, which provides a powerful approach for studying  non-Markovian stochastic  control problem with infinite horizon.
}
\end{remark}

Finally, we shall give the following stochastic verification theorem under the case that the value function $V$ is smooth enough.
\begin{theorem}
Assume \emph{(B1)-(B5)} hold. Suppose that $V$ is a $C^2$-function such that
$\partial_{x^{\mu}x^{\nu}}^{2}V$ is a function of polynomial growth
for any $\mu,\nu=1,\cdots,n$. Then an admissible control $u^*\in\mathcal{U}[0,\infty) $ is optimal if
\[
G(H(\Theta_s^{x,u^*},u^*_s))+\langle \partial_{x}%
V(X_s^{x,u^*}),b(X_s^{x,u^*},u^*_s)\rangle+f(\Theta_s^{x,u^*},u^*_s)]=0,\ \ a.e.\ s\geq 0, \ q.s.,
\]
where $\Theta_s^{x,u}:=(X_s^{x,u},V(X_s^{x,u}),\partial_xV(X_{s}^{x,u})\sigma(X_{s}^{x,u},u_s))$ for each $u\in\mathcal{U}[0,\infty)$.
\end{theorem}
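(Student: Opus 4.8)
The plan is to apply the $G$-It\^o formula to $V(X^{x,u^{*}})$ and recognize the outcome as the infinite-horizon $G$-BSDE \eqref{App1m} driven by $u^{*}$, so that the uniqueness asserted in Theorem \ref{HM3} forces $V(X^{x,u^{*}}_{s})=Y^{x,u^{*}}_{s}$. First I would set $\bar{Z}_{s}:=\partial_{x}V(X^{x,u^{*}}_{s})\sigma(X^{x,u^{*}}_{s},u^{*}_{s})$ and apply Theorem \ref{Thm6.5} to $V(X^{x,u^{*}})$ along the forward $G$-SDE. Since $V\in C^{2}$ with polynomial-growth second derivatives and $X^{x,u^{*}}$ has all moments by Lemma \ref{myq.1}, this yields, for $s\leq t$,
\[
V(X^{x,u^{*}}_{t})-V(X^{x,u^{*}}_{s})=\int_{s}^{t}\bar{Z}_{r}\,dB_{r}+\int_{s}^{t}\langle\partial_{x}V,b\rangle\,dr+\int_{s}^{t}\Big[\tfrac{1}{2}H_{ij}(\Theta^{x,u^{*}}_{r},u^{*}_{r})-g_{ij}(\Theta^{x,u^{*}}_{r},u^{*}_{r})\Big]\,d\langle B^{i},B^{j}\rangle_{r},
\]
where I have used the definition of $H$ to rewrite $\langle\partial_{x}V,h_{ij}\rangle+\tfrac{1}{2}(\sigma^{\top}\partial^{2}_{xx}V\sigma)_{ij}=\tfrac{1}{2}H_{ij}-g_{ij}$.

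The next step is to peel off a decreasing $G$-martingale. By positive homogeneity of $G$ one has $2G(\tfrac{1}{2}H)=G(H)$, so Remark \ref{yw1}, applied with $\xi^{ij}_{r}=\tfrac{1}{2}H_{ij}(\Theta^{x,u^{*}}_{r},u^{*}_{r})$, shows that $L_{t}:=\int_{0}^{t}\tfrac{1}{2}H_{ij}(\Theta^{x,u^{*}}_{r},u^{*}_{r})\,d\langle B^{i},B^{j}\rangle_{r}-\int_{0}^{t}G(H(\Theta^{x,u^{*}}_{r},u^{*}_{r}))\,dr$ is a decreasing $G$-martingale with $L_{0}=0$. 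Substituting $\int_{s}^{t}\tfrac{1}{2}H_{ij}\,d\langle B^{i},B^{j}\rangle=(L_{t}-L_{s})+\int_{s}^{t}G(H)\,dr$ and adding and subtracting $\int_{s}^{t}f(\Theta^{x,u^{*}}_{r},u^{*}_{r})\,dr$, I obtain
\[
V(X^{x,u^{*}}_{s})=V(X^{x,u^{*}}_{t})+\int_{s}^{t}f\,dr+\int_{s}^{t}g_{ij}\,d\langle B^{i},B^{j}\rangle_{r}-\int_{s}^{t}\bar{Z}_{r}\,dB_{r}-(L_{t}-L_{s})-\int_{s}^{t}\Phi^{u^{*}}_{r}\,dr,
\]
with $\Phi^{u^{*}}_{r}:=G(H(\Theta^{x,u^{*}}_{r},u^{*}_{r}))+\langle\partial_{x}V(X^{x,u^{*}}_{r}),b(X^{x,u^{*}}_{r},u^{*}_{r})\rangle+f(\Theta^{x,u^{*}}_{r},u^{*}_{r})$. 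The hypothesis is precisely $\Phi^{u^{*}}_{r}=0$ a.e.\ $r$, q.s., so the last integral vanishes and the displayed identity is exactly the infinite-horizon $G$-BSDE \eqref{App1m} driven by $u^{*}$, with solution candidate $(V(X^{x,u^{*}}),\bar{Z},L)$.

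Finally, I would invoke uniqueness. The regularity of $V$ and the moment bounds of Lemma \ref{myq.1} guarantee $(V(X^{x,u^{*}}),\bar{Z},L)\in\mathfrak{S}_{G}^{2}(0,\infty)$, and the quadratic growth of $V$ gives $|V(X^{x,u^{*}}_{s})|\leq C(1+|X^{x,u^{*}}_{s}|^{2})$; hence this triple meets the characterization in Theorem \ref{HM3}, and by the uniqueness stated there it must coincide with $(Y^{x,u^{*}},Z^{x,u^{*}},K^{x,u^{*}})$. In particular $V(x)=V(X^{x,u^{*}}_{0})=Y^{x,u^{*}}_{0}$. Since $V(x)=\inf_{u\in\mathcal{U}[0,\infty)}Y^{x,u}_{0}\leq Y^{x,u^{*}}_{0}=V(x)$, the control $u^{*}$ attains the infimum and is therefore optimal. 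I expect the main obstacle to be the bookkeeping in the It\^o step, namely correctly matching the $d\langle B^{i},B^{j}\rangle$ terms with $H_{ij}$ and extracting the decreasing $G$-martingale through $2G(\tfrac{1}{2}H)=G(H)$, together with checking that the resulting triple genuinely lies in $\mathfrak{S}_{G}^{2}(0,\infty)$ and satisfies the growth bound needed to apply Theorem \ref{HM3}; once these are secured, uniqueness of the infinite-horizon $G$-BSDE closes the argument and no delicate asymptotic-in-$T$ estimate is required.
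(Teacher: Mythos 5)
Your proposal is correct and follows essentially the same route as the paper: apply the $G$-It\^{o} formula to $V(X^{x,u^{*}})$, use the definition of $H$ and Remark \ref{yw1} (with positive homogeneity of $G$) to extract the decreasing $G$-martingale $\tfrac{1}{2}\int_0^{\cdot}H_{ij}\,d\langle B^i,B^j\rangle-\int_0^{\cdot}G(H)\,dr$, invoke the hypothesis to recognize the result as the infinite-horizon $G$-BSDE \eqref{App1m}, and conclude $V(X^{x,u^{*}}_{\cdot})=Y^{x,u^{*}}_{\cdot}$ by the uniqueness in Theorem \ref{HM3}. Your additional verification that the triple lies in $\mathfrak{S}_{G}^{2}(0,\infty)$ and satisfies the quadratic growth bound is a point the paper leaves implicit, but it does not change the argument.
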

\begin{proof}
Without loss of generality, assume $d=1$.
Then recalling the definition of $H$ and
applying $G$-It\^{o} formula \ref{Thm6.5} to $V(X_t^{x,u^*})$ yields that
\begin{align*}
V(X_t^{x,u^*})=&V(X_T^{x,u^*})-\int^T_t \partial_xV(X_{s}^{x,u^*}) b(X_{s}^{x,u^*},u_s^*) ds -\int^T_t\partial_xV(X_{s}^{x,u^*})\sigma(X_{s}^{x,u^*},u_s^*) dB_s\\
&-\frac{1}{2}\int^T_t H(\Theta_s^{u^*},u_s^*)d\langle B\rangle_s+\int^T_t g(\Theta_s^{u^*},u_s^*)d\langle B\rangle_s\\
=&V(X_T^{x,u^*})+\int^T_t f(\Theta_s^{x,u^*},u^*_s) ds+\int^T_t g(\Theta_s^{u^*},u_s^*)d\langle B\rangle_s \\
&-\int^T_t\partial_xV(X_{s}^{x,u^*})\sigma(X_{s}^{x,u^*},u_s^*) dB_s-(K_T^{u^*}-K_t^{u^*}),
\end{align*}
where $K_t^{u^*}=\frac{1}{2}\int^t_0 H(\Theta_s^{u^*},u^*_s)d\langle B\rangle_s-\int^t_0 G(H(\Theta_s^{u^*},u^*_s))ds$ is a decreasing $G$-martingale.
Thus by the uniqueness of $G$-BSDE with infinite horizon, we conclude that
$V(X_t^{x,u^*})=Y^{x,u^*}_t  q.s.
$
In particular  \[
V(x)=\underset{u\in\mathcal{U}[0,\infty)}{\inf}Y_{0}^{x,u}=Y^{x,u^*}_0,
\]
which completes the proof.
\end{proof}

\begin{example}{\upshape
Consider the following simple infinite horizon discounted stochastic linear model:
\[
J(x,u)=\mathbb{\hat{E}}[\int^{\infty}_0 \exp(-\lambda t)(X^{x,u}_t-u_t)dt],
\]
where $\lambda>0$,  $d=1$, $b(x,u)=- x+ u, h(x,u)=0$, $\sigma(x,u)=x+u$ and $U=[0,1]$. Thus taking $f(x,y,u)=-\lambda y+x-u$ and $g=0$ in equation \eqref{App1m} as in the introduction, we have
\[
J(x,u)=Y^{x,u}_0, \ \forall u\in\mathcal{U}[0,\infty).
\]
Note that $V(x)=\frac{1}{\lambda+1}(x-1)$ is the classical solution to the following equation
\[
\underset{u\in U}{\inf}[G((x+u)^2\partial^2_{xx}V(x))- x\partial_{x}%
V(x)+ u \partial_{x}%
V(x)-\lambda V(x)+ x-u]=0.
\]
Since
\[
G((x+u)^2\partial^2_{xx}V(x))- x\partial_{x}V(x)+  \partial_{x}%
V(x)-\lambda V(x)+ x-1=0,
\]
we deduce that $u^*_s=1, s\geq 0$ is an optimal control.
}
\end{example}
\begin{remark}
{\upshape
Note that  \cite{Fleming W.H} also studied the existence of optimal Markov control policy, i.e., $u^*_s=u^*(X^*_s)$ in a weak framework (see also \cite{J.Yong}).
However, in general we cannot get a optimal Markov control policy since our formulation is a
``strong'' framework.
}
\end{remark}

\textbf{Acknowledgement}: The authors would like to thank  Prof. Shige Peng for   his helpful discussions and
suggestions. The authors also thank the editor and the anonymous referee for their careful reading,  helpful
suggestions.

\appendix
\renewcommand\thesection{Appendix}
\section{ }
\renewcommand\thesection{A}
\subsection{The proof of Theorem \ref{HM3}}
\begin{proof}We shall only prove the existence, since the uniqueness can be proved in a similar way as in \cite{HW}. Without loss of generality, we assume that $g_{ij}=0$.
For convenience, we omit superscripts $t$, $\xi$ and $u$.

Denote by $(Y^n , Z^n, K^n)\in \mathfrak{S}_{G}^{2}(0,n)$ the unique solution of the
following $G$-BSDE in the interval $[t,n]$:
\begin{align*}
Y^n_s=\int^n_sf(X_r,Y^n_r,Z^n_r,u_r)dr-\int^n_sZ^n_rdB_r-(K_n^n-K^n_s).
\end{align*}
Setting ${f}_{s}=f(X_s,Y_{s}^{n},Z_{s}%
^{n},u_s)-f(X_s,0,0,u_s)$, we have
\[
Y^n_s=\int_{s}^{n}(f(X_r,0,0,u_r)+{f}_{r})dr-\int_{s}^{n}{Z}^n_{r}dB_{r}%
-(K^n_{n}-K_{s}^n),
\]
 Then by Lemma 3.5 in \cite{HW},  for each $\varepsilon>0$, we can get that
\[
{f}_{s}=a_{s}^{n,\varepsilon}Y^n_{s}+b_{s}^{n,\varepsilon}Z^n
_{s}-m_{s}^{n,\varepsilon},
\]
where $a_{s}^{n,\varepsilon},b_{s}^{n,\varepsilon},m_{s}^{n,\varepsilon}$ are in $M^2_G(0,T)$ for each $T>t$ and $a_{s}^{n,\varepsilon}\leq -\mu$,
 $|b_{s}^{n,\varepsilon}|\leq \alpha_2$ and $|m_{s}^{n,\varepsilon}|\leq 2(L+\alpha_2)\varepsilon$. Thus applying Lemma \ref{the5.2}, we derive that in the
extended $\tilde{G}$-expectation space,
\begin{equation*}
Y^n_{s}=\mathbb{\hat{E}}_{s}^{\tilde{G}}[\int_{s}^{n}(f(X_r,0,0,u_r)+m_r^{n,\varepsilon})\exp(\int^r_s a_{l}^{n,\varepsilon}dl){\Gamma}_r^{s,n,\varepsilon}dr] \ \ q.s.
\end{equation*}
where \[
{\Gamma}^{s,n,\varepsilon}_{r}=1+\int_{s}%
^{r}b^{n,\varepsilon}_{l}{\Gamma}^{s,n,\varepsilon}_{l}d\tilde{B}_{l}.
\]
Thus  we deduce that
\begin{align*}
|Y^n_{s}|
\leq \mathbb{\hat{E}}^{\tilde{G}}_s[\int^n_s e^{-\mu (r-s)}{\Gamma}^{s,n,\varepsilon}_r |f(X_r,0,0,u_r)| dr]+\frac{2(L+\alpha_2)}{\mu}\varepsilon.
\end{align*}
On the other hand, it follows from Lemma \ref{HW2} that for each $t\leq s\leq r$,
\[
 \mathbb{\hat{E}}^{\tilde{G}}_s[|f(X_r,0,0,u_r)|\Gamma^{s,n,\varepsilon}_r]\leq C_{f}\mathbb{\hat{E}}^{\tilde{G}}_s[(1+|X_r|^2)\Gamma^{s,n,\varepsilon}_r] \leq C_{f}+C_{f} C_{\eta}(1+|X_s|^2),
\]
where $C_f$ is a constant depending on $f$.
Then letting $\varepsilon\rightarrow0$,
we can obtain that
\begin{align*}
|Y^n_{s}|\leq \frac{C_{f}}{\mu}(1+C_{\eta})(1+|X_s|^2), \ \ q.s.
\end{align*}

Now we define $Y^n$, $Z^n$ and $K^n$ on the
whole time axis by setting
\[
Y^n_s=Z^n_s=0,\ K^n_s=K^n_n, \ \ \forall s>n.
\]
 Therefore using the same strategy as in \cite{HW} implies that for each $t\leq s\leq n\leq m$,
\begin{align*}
|Y^n_s-Y^m_s|\leq \frac{C_{f}}{\mu}(1+C_{\eta})(1+|X_s|^2)\exp(\mu s)(\exp(-\mu n)-\exp(-\mu m)), \ \ q.s.
\end{align*}
Thus, we get for each $t<T\leq n\leq m$,
\[
\lim\limits_{m,n\rightarrow\infty}\mathbb{\hat{E}}[\sup\limits_{s\in[t,T]}|Y^n_s-Y^m_s|^2]=0.
\]
In sprit of Proposition 3.8 in \cite{HJPS},  we  conclude that
 \[
 \lim\limits_{m,n\rightarrow\infty}\|Z^n-Z^m\|_{M_G^2(t,T)}=0.
 \]
Consequently, there exist  two processes $(Y,Z)\in\mathcal{S}_G^2(0,\infty)\times M_G^2(0,\infty)$ such that
\[
\lim\limits_{n\rightarrow\infty}\mathbb{\hat{E}}[\sup\limits_{s\in[t,T]}|Y^n_s-Y_s|^2+\int^T_t|Z^n_s-Z_s|^2dt]=0.
\]
It is obvious that $|Y_{s}|\leq \frac{C_{f}}{\mu}(1+C_{\eta})(1+|X_s|^2)$ for each $s\geq t$ q.s. Denote
\[
K_s:=Y_s-Y_t+\int^s_tf(X_r,Y_r,Z_r,u_r)dr-\int^s_tZ_rdB_r.
\]
One can easily check that $(Y,Z,K)$ is the solution to equation \eqref{App1m}.
\end{proof}
\subsection{$G$-Stochastic optimal control problem in finite horizon}
This section is devoted to extending the results in \cite{HJ1} to the case that the terminal condition is  a  continuous function of quadratic growth. The main idea is based on \cite{HW}, where there is no control.
For some fixed $T>0$ and for each $(t,x,u)\in[0,T]\times\mathbb{R}^n\times\mathcal{U}[t,T]$, consider the following type of $G$-BSDEs on finite interval $[t,T]$:
\begin{align}\label{my2}
&Y_{s}^{t,T,x,u}   =\phi(X^{t,x,u}_T)+\int_{s}^{T}f(X_{r}^{t,x,u}%
,Y_{r}^{t,T,x,u},Z_{r}^{t,T,x,u},u_r)dr-\int_{s}^{T}Z_{r}^{t,T,x,u}dB_{r}\nonumber\\
& \ \ \ +\int_{s}^{T}g_{ij}(X_{r}^{t,x,u}%
,Y_{r}^{t,T,x,u},Z_{r}^{t,T,x,u},u_r)d\langle B^i,B^j\rangle_{r} -(K_{T}^{t,T,x,u}-K_{s}^{t,T,x,u}),
\end{align}
where $\phi$ is a continuous function such that $|\phi(x)| \leq M(1+|x|^2)$ for some constant $M$. By Theorem \ref{the4.1},  the equation
\eqref{my2} has a unique solution $(Y^{t,T,x,u},Z^{t,T,x,u},K^{t,T,x,u})\in \mathfrak{S}_{G}^{2}(t,T)$.
For convenience, we set $(Y^{T,x,u},Z^{T,x,u},K^{T,x,u})=(Y^{0,T,x,u},Z^{0,T,x,u},\\ K^{0,T,x,u})$.
Then we denote \[\bar{V}(t,x)=\underset{u\in\mathcal{U}[t,T]}{ess\inf}Y^{t,T,x,u}_t.\]

Note that there exists a sequence  Lipschitz functions $\{\phi^m\}_{m=1}^{\infty}$  such that
\[
|\phi(x)-\phi^m(x)|\leq \frac{1}{m}\mathbf{1}_{\{|x|\leq m\}}+2M(1+|x|^2)\mathbf{1}_{\{|x|>m\}}\leq  \frac{1}{m}+\frac{2M(1+|x|^3)}{m}.
\]
Let $(Y^{t,T,m,x,u},Z^{t,T,m,x,u},K^{t,T,m,x,u})$ be the unique $\mathfrak{S}_{G}^{2}(t,T)$-solution of $G$-BSDEs \eqref{my2} with terminal condition
$Y^{t,T,m,x,u}_T=\phi^m(X^{t,x,u}_T)$ and denote $$\bar{V}^m(t,x)=\underset{u\in\mathcal{U}[t,T]}{ess\inf}Y^{t,T,m,x,u}_t.$$

\begin{lemma}\label{my3}
Under assumptions \emph{(B1)-(B2)}, $\bar{V}^m(t,x)$ is a deterministic continuous function. Moreover,
 $\bar{V}^m(t,x)$ is the unique viscosity solution of the following fully nonlinear PDE with terminal condition $\bar{V}^m(T,x)=\phi^m(x)$:
\begin{equation}
\left\{
\begin{array}
[c]{l}%
\partial_{t}v+ \underset{u\in U}{\inf}[G(H(x,v,\partial_{x}%
v,\partial_{xx}^{2}v,u))+\langle \partial_{x}%
v,b(x,u)\rangle+f(x,v,\partial_{x}%
v\sigma(x,u),u)]=0,\\
v(T,x)=\phi(x).
\end{array}
\right.  \label{myfeynman}%
\end{equation}
\end{lemma}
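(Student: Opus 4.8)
The plan is to transplant the programme of \cite{HJ1} to the present finite-horizon problem, replacing its global Lipschitz hypotheses on the generator by the locally Lipschitz (quadratic growth) condition in (B2) and compensating with the a priori estimates of Section 4 and the linearization device of \cite{HW}. Since $\phi^m$ is Lipschitz the terminal datum is harmless, so only the running cost $f,g_{ij}$ demands the refined treatment; note also that only (B1)-(B2) are invoked here, the dissipativity assumptions (B3)-(B5) being irrelevant on a bounded interval, so every constant below may depend on $T$ and on $m$.

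First I would show that $\bar{V}^m(t,x)$ is deterministic, following Theorem \ref{the-dpp-11}: for $u\in\mathcal{U}^t[t,T]$ the number $Y_t^{t,T,m,x,u}$ is a constant, and a finite-horizon version of the density Lemma \ref{le-dpp-2}, combined with a continuity-in-control estimate proved exactly as Theorem \ref{le-dpp-3}(iii) but with the exponential weight replaced by a $T$-dependent constant, yields $\underset{u\in\mathcal{U}[t,T]}{\text{ess}\inf}\,Y_t^{t,T,m,x,u}=\underset{u\in\mathcal{U}^t[t,T]}{\inf}Y_t^{t,T,m,x,u}$. Local Lipschitz continuity in $x$ then comes from Theorem \ref{pro3.5} applied to the solutions started at $x$ and $x'$: bounding the terminal difference by the Lipschitz constant of $\phi^m$ times $|X_T^{t,x,u}-X_T^{t,x',u}|$ and the generator difference by the quadratic term $(1+|X_s^{t,x,u}|+|X_s^{t,x',u}|)|X_s^{t,x,u}-X_s^{t,x',u}|$, and then invoking Lemma \ref{myq.1}(i)-(ii) with H\"older's inequality, gives $|Y_t^{t,T,m,x,u}-Y_t^{t,T,m,x',u}|\le C(1+|x|+|x'|)|x-x'|$ uniformly in $u$, which transfers to $\bar{V}^m$ upon taking the infimum.

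With determinism and spatial continuity in hand, I would establish the dynamic programming principle $\bar{V}^m(t,x)=\underset{u\in\mathcal{U}[t,t+\delta]}{\inf}\mathbb{G}_{t,t+\delta}^{t,x,u}[\bar{V}^m(t+\delta,X_{t+\delta}^{t,x,u})]$ in the two directions of Theorem \ref{dpp-2}: the ``$\geq$'' inequality from the ess inf characterization and the comparison theorem for $G$-BSDEs, and the ``$\leq$'' inequality from the implied partition construction of Lemma \ref{le-new-dpp2}. Continuity in $t$ then follows from this DPP together with the flow estimate Lemma \ref{myq.1}(iii), and feeding the DPP into Peng's backward semigroup argument \cite{peng-dpp-1} shows that $\bar{V}^m$ is a viscosity solution of \eqref{myfeynman} with terminal datum $\phi^m$.

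The hard part will be uniqueness. Because the Hamiltonian $H$ is only locally (not uniformly) continuous in $(x,p,A)$ and the admissible solutions are of quadratic growth, the classical comparison argument does not apply off the shelf. I would instead prove a comparison principle in the spirit of \cite{HW}, doubling the variables and inserting a growth-compensating weight to tame the quadratic behaviour at infinity, and then closing the Crandall-Ishii estimates by exploiting the non-degeneracy of $G$ and the Lipschitz bounds in (B2). Securing this comparison principle for quadratic-growth viscosity solutions is the principal obstacle, and it is what forces the detour through \cite{HW} rather than a direct appeal to \cite{HJ1}.
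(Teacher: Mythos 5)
Your steps establishing that $\bar{V}^m$ is deterministic, locally Lipschitz in $x$, satisfies the DPP, and is a viscosity solution of \eqref{myfeynman} are correct in outline, but they amount to re-deriving machinery the paper simply cites: since $\phi^m$ is Lipschitz, Lemma \ref{my3} is exactly the finite-horizon, Lipschitz-terminal case settled in \cite{HJ1} (deterministic value function, implied-partition argument, DPP, viscosity property) combined with \cite{HJPS1}, and the whole point of introducing the approximating sequence $\phi^m$ in this appendix is to reduce the quadratic-growth terminal datum $\phi$ to that quotable case. One caveat on your outline: the implied-partition result you invoke, Lemma \ref{le-new-dpp2}, is the infinite-horizon statement whose first step leans on the dissipativity conditions (B3)--(B5); what you actually need is its finite-horizon ancestor (Lemma 22 and Theorem 21 of \cite{HJ1}), consistent with your correct remark that only (B1)--(B2) are available here.

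The genuine gap is your uniqueness step. You identify the comparison principle for quadratic-growth solutions as ``the principal obstacle'' and then do not prove it: for a Hamiltonian whose $x$-Lipschitz constant grows linearly (the $(1+|x|+|x^{\prime}|)|x-x^{\prime}|$ structure in (B2)) and for solutions of quadratic growth, the doubling-of-variables estimate with a growth-compensating weight is precisely the technical heart, and your sketch never closes the Crandall--Ishii structure-condition bound. Moreover, the premise that one is forced to build this ``in the spirit of \cite{HW}'' is doubly off. First, the uniqueness argument of \cite{HW} is probabilistic (a stochastic-representation-plus-decay argument, which is what the present paper uses for the elliptic equation in Theorem \ref{viscosity}), not a doubling argument, so there is no comparison principle there to transplant. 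Second, the comparison you need is already available in the literature: writing $G(A)=\frac{1}{2}\sup_{Q\in\Gamma}\mathrm{tr}[AQ]$ turns \eqref{myfeynman} into an Isaacs equation of the form $\partial_{t}v+\inf_{u\in U}\sup_{Q\in\Gamma}[\cdots]=0$, and uniqueness for such equations in a growth class containing all polynomially growing functions is Theorem 6.1 of \cite{BJ} --- the very result the paper invokes for Theorem \ref{my8}, and which underlies the uniqueness statements of \cite{HJ1} and \cite{HJPS1}. At that point the correct move is this reduction and citation, not an unexecuted comparison proof.
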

\begin{proof}
The proof is immediate from \cite{HJ1} and \cite{HJPS1}.
\end{proof}

Now we shall state the main result of this appendix.
\begin{theorem} \label{my8}
 Assume \emph{(B1)-(B2)} hold. Then $\bar{V}(t,x)$ is the unique viscosity solution of the  fully nonlinear PDE
\eqref{myfeynman} with terminal condition $\bar{V}(T,x)=\phi(x)$.
\end{theorem}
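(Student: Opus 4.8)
The plan is to realise $\bar V$ as a locally uniform limit of the Lipschitz–terminal value functions $\bar V^m$ of Lemma \ref{my3}, deduce existence from the stability of viscosity solutions, and then settle uniqueness through a comparison principle for quadratic-growth solutions.

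First I would compare $\bar V^m$ and $\bar V$. Since $Y^{t,T,m,x,u}$ and $Y^{t,T,x,u}$ solve the $G$-BSDE \eqref{my2} with the \emph{same} generators $f,g_{ij}$ and differ only through their terminal values $\phi^m(X^{t,x,u}_T)$ and $\phi(X^{t,x,u}_T)$, the a priori estimate of Theorem \ref{pro3.5} gives
\[
|Y_t^{t,T,m,x,u}-Y_t^{t,T,x,u}|^2\le C\,\mathbb{\hat E}_t\big[|\phi^m(X^{t,x,u}_T)-\phi(X^{t,x,u}_T)|^2\big].
\]
Inserting the approximation bound $|\phi^m(z)-\phi(z)|\le \tfrac1m+\tfrac{2M(1+|z|^3)}{m}$ and then the moment estimate of Lemma \ref{myq.1}(ii) with $p=6$, the right-hand side is dominated by $\tfrac{C_T}{m^2}(1+|x|^6)$, \emph{uniformly} in $u\in\mathcal U[t,T]$. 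Because the bound is deterministic and uniform in $u$, taking the essential infimum over $u$ shows that the ess inf defining $\bar V$ exists, is a deterministic continuous function (each $\bar V^m$ being so by Lemma \ref{my3}), and satisfies $|\bar V^m(t,x)-\bar V(t,x)|\le \tfrac{C_T}{m}(1+|x|^3)$. Hence $\bar V^m\to\bar V$ uniformly on compact subsets of $[0,T]\times\mathbb R^n$. As the operator in \eqref{myfeynman} is continuous and independent of $m$ while $\phi^m\to\phi$ locally uniformly, the standard stability property of viscosity solutions applied to Lemma \ref{my3} yields that $\bar V$ is a viscosity solution of \eqref{myfeynman} with terminal datum $\phi$.

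For uniqueness I would prove a comparison principle: if $w_1$ (USC) is a viscosity subsolution and $w_2$ (LSC) a viscosity supersolution of \eqref{myfeynman}, both of quadratic growth with $w_1(T,\cdot)\le w_2(T,\cdot)$, then $w_1\le w_2$; uniqueness is then immediate. The argument doubles the spatial variable and applies the Crandall--Ishii maximum principle for semicontinuous functions to a penalized function of the form $w_1(t,x)-w_2(t,y)-\tfrac{1}{2\varepsilon}|x-y|^2$ (together with a perturbation rendering $w_1$ a strict subsolution and pushing the maximum away from the terminal time $t=T$, where the ordering is known). Evaluating the resulting sub/supersolution inequalities at the maximizing point, the contradiction with $M:=\sup(w_1-w_2)>0$ is forced by (B2) (the local Lipschitz dependence of $b,\sigma,h_{ij},f,g_{ij}$ in $x$ and the Lipschitz dependence of $f,g_{ij}$ in $z$), the degenerate ellipticity and monotonicity of $A\mapsto G(H(\cdot,\cdot,\cdot,A,u))$, the sublinearity and Lipschitz property of $G$, and the Lipschitz dependence of $f$ in the $v$-variable, which over the finite horizon is absorbed through an $e^{Kt}$ weight; all estimates are carried out uniformly over $u\in U$ inside the infimum, and no convexity of the fully nonlinear $G$-term is used.

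The hard part is the coercivity at infinity imposed by the quadratic growth: the usual penalty $\lambda(1+|x|^2+|y|^2)$ is not available here, since for quadratic-growth $w_1,w_2$ letting $\lambda\to0$ destroys the compactness needed to attain the supremum, whereas keeping $\lambda$ fixed leaves an uncontrolled $O(\lambda)$ residue in the second-order term of the fully nonlinear operator. I would therefore follow the method of \cite{HW} for the uncontrolled case and use a penalization tailored to quadratic growth, so that the maximizing sequence remains in a compact set while the penalty's contribution to the second-order operator still vanishes in the limit, exploiting (B2) to control the growth of the coefficients. This localization, combined with the Crandall--Ishii lemma, is the technical core of the proof; once it is in place, the remaining computations are routine and parallel those of \cite{HW}.
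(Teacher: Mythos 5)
Your existence half is correct and is essentially the paper's own argument: Lipschitz approximation $\phi^m$ of $\phi$, the a priori estimate of Theorem \ref{pro3.5} combined with the moment bound of Lemma \ref{myq.1} (ii) to get $|\bar V^m(t,x)-\bar V(t,x)|\leq C_T(1+|x|^3)/m$ uniformly in $u$, hence locally uniform convergence $\bar V^m\to\bar V$, and then stability of viscosity solutions. The paper packages exactly this as Lemmas \ref{my3}, \ref{my7} and Proposition 4.3 of \cite{CMI}; your uniform-in-$u$ estimate even yields the existence of the essential infimum in one stroke, which the paper obtains separately in Lemma \ref{the-dpp-1}.

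The uniqueness half, however, contains a genuine gap. The paper proves nothing analytic here: it invokes Theorem 6.1 of \cite{BJ}, a ready-made comparison/uniqueness theorem for parabolic Isaacs equations of this type, valid in a class of continuous functions containing all functions of polynomial (in particular quadratic) growth; the device that makes the doubling-of-variables argument compatible with such growth is Ishii's weight of the form $\exp\bigl(\tilde A[\log((1+|x|^2)^{1/2})]^2\bigr)$, and constructing and estimating that weight is precisely the substance of the cited theorem. You propose instead to prove comparison directly, you correctly identify that the entire difficulty is the penalization — one that keeps the maximizing points in a compact set while its contribution to the operator vanishes, for an equation with linear-growth coefficients, a generator of quadratic growth in $x$, and sub/supersolutions of quadratic growth — and then you do not construct it: you defer it to ``the method of \cite{HW}''. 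That deferral cannot close the gap. The uniqueness method of \cite{HW}, as it is used in this very paper (proof of Theorem \ref{viscosity}), is probabilistic rather than analytic: it regards a quadratic-growth viscosity solution of the elliptic equation as a solution of the parabolic equation \eqref{myq2} with its own trace as terminal datum, and then invokes the uniqueness of that parabolic problem — i.e., exactly the statement you are trying to prove, through Lemma \ref{myy1} and the present theorem. So within this paper your appeal to \cite{HW} is circular, and in any case \cite{HW} does not supply a quadratic-growth-adapted penalization for a doubling argument. To complete your route you must actually exhibit the weight (Ishii's log-squared exponential weight, as in the proof of Theorem 6.1 of \cite{BJ}, works, whereas the polynomial penalties you rightly discard do not), or simply cite \cite{BJ} as the paper does.
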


In order to prove this theorem, we need the following lemmas.

\begin{lemma}\label{my4}
For each   function $\varphi\in C(\mathbb{R}^n)$ with quadratic growth, $\mathbb{\hat{E}}[\varphi(X^{x,u}_t)]$ is a continuous function of $(t,x,u)\in[0,T]\times\mathbb{R}^n\times\mathcal{U}[0,T]$.
\end{lemma}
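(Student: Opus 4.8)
The plan is to prove sequential continuity: given a sequence $(t_k,x_k,u_k)\to(t,x,u)$ in $[0,T]\times\mathbb{R}^n\times\mathcal{U}[0,T]$, where $\mathcal{U}[0,T]$ carries the $M_G^2$-norm, I would show $\mathbb{\hat{E}}[\varphi(X^{x_k,u_k}_{t_k})]\to\mathbb{\hat{E}}[\varphi(X^{x,u}_t)]$. Since $|\mathbb{\hat{E}}[\zeta_1]-\mathbb{\hat{E}}[\zeta_2]|\le\mathbb{\hat{E}}[|\zeta_1-\zeta_2|]$, it suffices to drive $\mathbb{\hat{E}}[|\varphi(X^{x_k,u_k}_{t_k})-\varphi(X^{x,u}_t)|]$ to zero. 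The argument splits into a flow-estimate step, which is routine, and a passage through the quadratic-growth nonlinearity $\varphi$, which is the delicate part.

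First I would establish $L_G^p$-convergence of the flow for every $p\ge 2$. Assuming $t_k\ge t$ without loss of generality and using the flow property $X^{x_k,u_k}_{t_k}=X^{t,X^{x_k,u_k}_t,u_k}_{t_k}$, I decompose
\[
X^{x_k,u_k}_{t_k}-X^{x,u}_t=\big(X^{x_k,u_k}_t-X^{x,u}_t\big)+\big(X^{x_k,u_k}_{t_k}-X^{x_k,u_k}_t\big).
\]
Lemma \ref{myq.1}(i) (at initial time $0$, where $\mathbb{\hat{E}}_0=\mathbb{\hat{E}}$) bounds the first bracket in $L_G^p$ by $C_T(|x_k-x|^p+\mathbb{\hat{E}}[\int_0^T|u^k_s-u_s|^p\,ds])$, while applying Lemma \ref{myq.1}(iii) conditionally with $\xi=X^{x_k,u_k}_t$ and then taking $\mathbb{\hat{E}}$ and using Lemma \ref{myq.1}(ii) bounds the second bracket by $C_T(1+|x_k|^p)|t_k-t|^{p/2}$. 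Because $U$ is compact the controls are uniformly bounded, so $\mathbb{\hat{E}}[\int_0^T|u^k_s-u_s|^p\,ds]\le(2\sup_{a\in U}|a|)^{p-2}\|u^k-u\|_{M_G^2}^2\to0$; hence $X^{x_k,u_k}_{t_k}\to X^{x,u}_t$ in $L_G^p(\Omega)$ for every $p$, and Lemma \ref{myq.1}(ii) gives a uniform bound on $\mathbb{\hat{E}}[|X^{x_k,u_k}_{t_k}|^{2p}]$ (for large $k$ one may take $|x_k|\le|x|+1$).

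Second I would pass to the limit through $\varphi$. Fix $\varepsilon>0$. Using $|\varphi(y)|\le M(1+|y|^2)$, Hölder's inequality for $\mathbb{\hat{E}}$, and the Chebyshev-type capacity estimate $c(|X^{x_k,u_k}_{t_k}|>R)\le R^{-2}\mathbb{\hat{E}}[|X^{x_k,u_k}_{t_k}|^2]$, I choose $R$ so large that the contribution of $\{|X^{x_k,u_k}_{t_k}|>R\}\cup\{|X^{x,u}_t|>R\}$ to $\mathbb{\hat{E}}[|\varphi(X^{x_k,u_k}_{t_k})-\varphi(X^{x,u}_t)|]$ is below $\varepsilon/3$ uniformly in $k$. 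On the complementary event both arguments lie in the ball $B_R$, where $\varphi$ is uniformly continuous with some modulus $\omega_R$; splitting once more according to $\{|X^{x_k,u_k}_{t_k}-X^{x,u}_t|\le\delta\}$ and its complement, the first piece is at most $\omega_R(\delta)<\varepsilon/3$ for small $\delta$, and the second is at most $\omega_R(2R)\,c(|X^{x_k,u_k}_{t_k}-X^{x,u}_t|>\delta)\le\omega_R(2R)\delta^{-p}\mathbb{\hat{E}}[|X^{x_k,u_k}_{t_k}-X^{x,u}_t|^p]$, which vanishes by the first step. Hence $\limsup_k\mathbb{\hat{E}}[|\varphi(X^{x_k,u_k}_{t_k})-\varphi(X^{x,u}_t)|]\le\varepsilon$, and $\varepsilon\downarrow0$ yields the claim.

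The routine part is the $L_G^p$ flow estimate, which follows directly from Lemma \ref{myq.1}. The main obstacle is the second step: since $\varphi$ has only quadratic growth and $\mathbb{\hat{E}}$ is merely sublinear with no dominating measure, classical dominated convergence is unavailable, so the uniform-integrability argument must be realised by hand via the uniform $L_G^{2p}$ moment bounds, the capacity/Chebyshev estimates, and the local modulus of continuity of $\varphi$. An essentially equivalent route, which I would mention as a fallback, is to approximate $\varphi$ uniformly on each ball by bounded Lipschitz functions $\varphi_R$ with $|\varphi_R|\le M(1+|\cdot|^2)$: continuity for each $\varphi_R$ follows from Lipschitz continuity plus the first step, and the error $\mathbb{\hat{E}}[|\varphi-\varphi_R|(X^{x_k,u_k}_{t_k})]$ is controlled uniformly by the same tail estimate, thereby reducing the general case to the Lipschitz one.
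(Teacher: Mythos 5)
Your proposal is correct and follows essentially the same route as the paper's own proof: both rely on the flow estimates of Lemma \ref{myq.1} together with a truncation at a large radius (controlled by uniform higher moments), uniform continuity of $\varphi$ on the ball, and a Chebyshev/H\"older bound on the event where the two flows differ by more than a small threshold. The only substantive difference is that you spell out details the paper leaves implicit, namely the decomposition of $X^{x_k,u_k}_{t_k}-X^{x,u}_t$ via the flow property and the upgrade of $M_G^2$-convergence of controls to $M_G^p$-convergence using the boundedness of $U$.
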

\begin{proof}
The proof is similar to Lemma A.5 in \cite{HW}. For convenience, we shall give the sketch of the proof.
Assume $|\varphi(x)|\leq C_{\varphi}(1+|x|^2)$, where $C_{\varphi}$ is generic constant depending on $\varphi$ and may vary from line to line.
For each given $N>0$ and $T>0$, for any $t,t^{\prime}< T$, $x,x^{\prime}\in\mathbb{R}^n$ and $u,u^{\prime}\in\mathcal{U}[0,T]$, we have
\begin{align*}
&|\mathbb{\hat{E}}[\varphi(X^{x,u}_t)]-\mathbb{\hat{E}}[\varphi(X^{x^{\prime},u^{\prime}}_{t^{\prime}})]|\\&
\leq \mathbb{\hat{E}}[|\varphi(X^{x,u}_t)-\varphi(X^{x^{\prime},u^{\prime}}_{t^{\prime}})|\mathbf{1}_{\{|X^{x,u}_t|\leq N\}\cap \{|X^{x^{\prime},u^{\prime}}_{t^{\prime}}|\leq N\}}]+\frac{ C_{\varphi}}{N}\mathbb{\hat{E}}[1+|X^{x^{\prime},u^{\prime}}_{t^{\prime}}|^3+|X^{x,u}_t|^3].
\end{align*}
Note that for each given $\epsilon>0$, there is $\rho>0$ such that
\[
|\varphi(z)-\varphi(z^{\prime})|\leq \frac{\epsilon}{2} \ \text{whenever $|z-z^{\prime}|<\rho$ and $|z|,|z^{\prime}|\leq N$}.
\]
From Lemma  \ref{myq.1} and H\"{o}lder's inequality, there is a constant $\delta>0$ such that
\[
\mathbb{\hat{E}}[|\varphi(X^{x,u}_t)-\varphi(X^{x^{\prime},u^{\prime}}_{t^{\prime}})|\mathbf{1}_{\{|X^{x,u}_t-X^{x^{\prime},u^{\prime}}_{t^{\prime}}|\geq \rho\}}]<\frac{\epsilon}{2}
\]
whenever $|x-x^{\prime}|\leq \delta$, $|t-{t^{\prime}}|\leq \delta$ and $\|u-u^{\prime}\|_{M^2_G(0,T)}\leq \delta$.
Consequently,
\begin{align*}
&|\mathbb{\hat{E}}[\varphi(X^{x,u}_t)]-\mathbb{\hat{E}}[\varphi(X^{x^{\prime},u^{\prime}}_{t^{\prime}})]|\\&
\leq \mathbb{\hat{E}}[|\varphi(X^{x,u}_t)-\varphi(X^{x^{\prime},u^{\prime}}_{t^{\prime}})|\mathbf{1}_{\{|X^{x,u}_t-X^{x^{\prime},u^{\prime}}_{t^{\prime}}|<\rho\}
\cap\{|X^{x,u}_t|\leq N\}\cap \{|X^{x^{\prime},u^{\prime}}_{t^{\prime}}|\leq N\}}]\\ &\  +\mathbb{\hat{E}}[|\varphi(X^{x,u}_t)-\varphi(X^{x^{\prime},u^{\prime}}_{t^{\prime}})|\mathbf{1}_{\{|X^{x,u}_t-X^{x^{\prime},u^{\prime}}_{t^{\prime}}|\geq \rho\}}]
+\frac{C_{\varphi}}{N}\mathbb{\hat{E}}[1+|X^{x^{\prime},u^{\prime}}_{t^{\prime}}|^3+|X^{x,u}_t|^3]\\&
\leq \epsilon+\frac{C_{\varphi}}{N}\mathbb{\hat{E}}[1+|X^{x^{\prime},u^{\prime}}_{t^{\prime}}|^3+|X^{x,u}_t|^3]
\end{align*}
whenever $|x-x^{\prime}|\leq \delta$, $|t-{t^{\prime}}|\leq \delta$ and $\|u-u^{\prime}\|_{M^2_G(0,T)}\leq \delta$. Thus we get
\[
\limsup\limits_{(t^{\prime},x^{\prime},u^{\prime})\rightarrow(t,x,u)}|\mathbb{\hat{E}}[\varphi(X^{x,u}_t)]-\mathbb{\hat{E}}[\varphi(X^{x^{\prime},u^{\prime}}_{t^{\prime}})]| \leq \epsilon+\frac{C_{\varphi}}{N}\mathbb{\hat{E}}[1+|X^{x,u}_t|^3].
\]
Then we  obtain the desired result by letting $\epsilon \downarrow 0$ and then sending $N\rightarrow\infty$.
\end{proof}

\begin{lemma}
\label{the-dpp-1} Assume \emph{(B1)-(B2)} hold. Then the value function $\bar{V}(t,x)$ exists and
\[
\bar{V}(t,x)=\inf_{u\in\mathcal{U}^{t}[t,T]}Y_{t}^{t,T,x,u}.
\]
\end{lemma}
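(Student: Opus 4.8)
The plan is to transplant the argument of Theorem~\ref{the-dpp-11} to the finite interval $[t,T]$. I would set $c(t,x):=\inf_{u\in\mathcal{U}^{t}[t,T]}Y_{t}^{t,T,x,u}$ and prove that $c(t,x)$ is the essential infimum in the sense of Definition~\ref{esssup}; this simultaneously yields the existence of $\bar{V}(t,x)$ and the claimed identity. First note that for $u\in\mathcal{U}^{t}[t,T]$ the whole system \eqref{App1y}, \eqref{my2} is driven only by the increments of $B$ after time $t$ with deterministic initial datum $x$, so $Y_{t}^{t,T,x,u}$ is a deterministic constant; combined with the a priori bound $|Y_{t}^{t,T,x,u}|\leq C(1+|x|^{2})$ (Theorem~\ref{the4.1} together with the quadratic growth of $\phi$) this shows $c(t,x)$ is a finite number.

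The second requirement of Definition~\ref{esssup} is immediate: if $\eta\leq Y_{t}^{t,T,x,u}$ q.s.\ for every $u\in\mathcal{U}[t,T]$, then in particular $\eta\leq Y_{t}^{t,T,x,v}$ for every $v\in\mathcal{U}^{t}[t,T]\subset\mathcal{U}[t,T]$, and since these are constants $\eta\leq c(t,x)$ q.s. The heart of the matter is the first requirement, namely $Y_{t}^{t,T,x,u}\geq c(t,x)$ q.s.\ for each $u\in\mathcal{U}[t,T]$. To obtain it I would, exactly as in Lemma~\ref{le-dpp-2} (which rests on Lemma~13 of \cite{HJ1}), approximate $u$ on the bounded interval $[t,T]$ by step controls $u^{k}=\sum_{i=1}^{N_{k}}\mathbf{1}_{A_{i,k}}u^{i,k}\in\mathbb{U}[t,T]$ with $u^{i,k}\in\mathcal{U}^{t}[t,T]$, $\{A_{i,k}\}_{i}$ a $\mathcal{B}(\Omega_{t})$-partition, and $\mathbb{\hat{E}}[\int_{t}^{T}|u_{s}-u_{s}^{k}|^{2q}ds]\to0$.

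For each $u^{k}$, uniqueness of the $G$-BSDE \eqref{my2} (Theorem~\ref{the4.1}) gives $Y_{t}^{t,T,x,u^{k}}=\sum_{i}\mathbf{1}_{A_{i,k}}Y_{t}^{t,T,x,u^{i,k}}$ q.s., and since each $Y_{t}^{t,T,x,u^{i,k}}$ is a constant not smaller than $c(t,x)$, we obtain $Y_{t}^{t,T,x,u^{k}}\geq c(t,x)$ q.s. It then remains to pass to the limit $k\to\infty$. By Lemma~\ref{myq.1}(i) the convergence $\mathbb{\hat{E}}[\int_{t}^{T}|u_{s}-u_{s}^{k}|^{2q}ds]\to0$ forces $X_{T}^{t,x,u^{k}}\to X_{T}^{t,x,u}$ in $L_{G}^{2q}$; combined with the continuity and quadratic growth of $\phi$ via a uniform-integrability argument as in Lemma~\ref{my4}, this yields $\phi(X_{T}^{t,x,u^{k}})\to\phi(X_{T}^{t,x,u})$ in $L_{G}^{2}$, while the generator increments tend to $0$ by (B2) and the moment bounds of Lemma~\ref{myq.1}. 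Feeding these into the a priori estimate of Theorem~\ref{pro3.5},
\[
|Y_{t}^{t,T,x,u}-Y_{t}^{t,T,x,u^{k}}|^{2}\leq C\,\mathbb{\hat{E}}_{t}\big[|\phi(X_{T}^{t,x,u})-\phi(X_{T}^{t,x,u^{k}})|^{2}+\big(\textstyle\int_{t}^{T}\hat{h}_{s}^{k}\,ds\big)^{2}\big],
\]
the right-hand side tends to $0$ in $L_{G}^{1}$, hence along a subsequence it converges q.s.\ to $0$; thus $Y_{t}^{t,T,x,u^{k}}\to Y_{t}^{t,T,x,u}$ q.s., and letting $k\to\infty$ in $Y_{t}^{t,T,x,u^{k}}\geq c(t,x)$ proves the first requirement.

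The main obstacle is this limit passage under the sole hypotheses (B1)-(B2). Because the dissipativity conditions (B3)-(B5) are now absent there is no exponential-in-time decay and the stability constant $C$ will depend on $T$, which is harmless on a finite horizon. The genuinely delicate point is that $\phi$ is merely continuous with quadratic growth rather than Lipschitz, so one cannot bound $|\phi(X_{T}^{t,x,u})-\phi(X_{T}^{t,x,u^{k}})|$ by a constant times $|X_{T}^{t,x,u}-X_{T}^{t,x,u^{k}}|$; instead one must split according to $\{|X_{T}|\leq N\}$ and its complement and control the tail through the higher-moment bounds of Lemma~\ref{myq.1}(ii) (available since $x$ is deterministic, whence $X^{t,x,u}\in M_{G}^{p}$ for $p>2$), precisely the mechanism of Lemma~\ref{my4}. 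The generator term is comparatively routine, since (B2) already provides a locally Lipschitz-in-$x$ bound whose quadratic factor is absorbed by H\"older's inequality and the same moment estimates. Once the $L_{G}^{2}$-convergence of the terminal and generator differences is secured, extracting a q.s.-convergent subsequence and concluding is straightforward.
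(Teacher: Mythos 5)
Your proposal is correct and follows essentially the same route as the paper: the paper's proof consists of establishing that $u^{k}\rightarrow u$ in $M_{G}^{2}(0,T)$ implies $Y_{t}^{t,T,x,u^{k}}\rightarrow Y_{t}^{t,T,x,u}$ in $L_{G}^{2}(\Omega_{t})$ (via Lemma \ref{my4} and Theorem \ref{pro3.5}, exactly your truncation-plus-stability mechanism for the merely continuous, quadratic-growth $\phi$), and then invoking the argument of Theorem 17 of \cite{HJ1}, which is precisely your step-control approximation, the decomposition $Y_{t}^{t,T,x,u^{k}}=\sum_{i}\mathbf{1}_{A_{i,k}}Y_{t}^{t,T,x,u^{i,k}}$ by uniqueness, and the q.s.\ limit passage. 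You have simply written out in full the steps the paper delegates to \cite{HJ1}.
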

\begin{proof}
 Assume $u\in\mathcal{U}[t,T]$ and $u^k\in\mathcal{U}[t,T]$ for $k\geq 1$.
 Then it follows from Lemma \ref{my4} and Theorem \ref{pro3.5} (see also Theorem 7 in \cite{HJ1}) that $Y^{t,T,x,u^k}_t$ converges to $Y^{t,T,x,u}_t$ in $L_G^2(\Omega_t)$
 whenever $u^k$ converges to $u$ in $M^2_G(0,T)$ as $k\rightarrow\infty$. Then one can complete the proof by the same way as in Theorem 17 of \cite{HJ1}.
\end{proof}
\begin{lemma}\label{my7}
Assume \emph{(B1)-(B2)} hold. Then the following properties hold:
\begin{description}
\item[(i)] There exists a constant $C$ depending on $M$, $T$, $G$, $L$, $\alpha_1$ and $\alpha_2$ such that \[\|Y^{t,T,m,x,u}\|_{S_G^2(t,T)}+\|Z^{t,T,m,x,u}\|_{M_G^2(t,T)}\leq C(1+|x|^2), \ \forall x\in\mathbb{R}^n,m\geq 1, u\in\mathcal{U}[t,T];\]
\item[(ii)] $\lim\limits_{m\rightarrow\infty}\mathbb{\hat{E}}[\sup\limits_{s\in[t,T]}|Y^{t,T,m,x,u}_s-Y^{t,T,x,u}_s|^2]=0$;
\item[(iii)] $\bar{V}(t,x)$ is a  continuous function of quadratic growth;
\item[(iv)] $\lim\limits_{m\rightarrow \infty}\bar{V}^m(t_m,x_m)=\bar{V}(t,x)$ for each given $(t_m,x_m)\in[0,T]\times\mathbb{R}^n$ with $(t_m,x_m)\rightarrow (t,x)$.
\end{description}
\end{lemma}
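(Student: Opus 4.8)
The plan is to prove the four assertions in the order listed, using the $G$-BSDE estimates of Theorem \ref{pro3.5}, the $G$-SDE moment bounds of Lemma \ref{myq.1}, and the properties of the approximating value functions $\bar{V}^m$ recorded in Lemma \ref{my3}. For (i) I would first note that the truncation bound $|\phi^m(x)-\phi(x)|\leq \frac{1}{m}+\frac{2M(1+|x|^3)}{m}$ combined with $|\phi(x)|\leq M(1+|x|^2)$ yields a quadratic bound $|\phi^m(x)|\leq C(1+|x|^2)$ that is \emph{uniform in} $m$. Feeding the terminal value $\phi^m(X_T^{t,x,u})$ and the generators $f,g_{ij}$ (which have at most quadratic growth in $x$ by (B2)) into the a priori estimate of Theorem \ref{pro3.5}, and invoking $\mathbb{\hat{E}}[\sup_{s\in[t,T]}|X_s^{t,x,u}|^p]\leq C(1+|x|^p)$ from Lemma \ref{myq.1}(ii), gives $\|Y^{t,T,m,x,u}\|_{S_G^2(t,T)}+\|Z^{t,T,m,x,u}\|_{M_G^2(t,T)}\leq C(1+|x|^2)$ with $C$ independent of $m$, $x$ and $u$.

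For (ii) I would apply the stability estimate of Theorem \ref{pro3.5} to the pair $(Y^{t,T,m,x,u},Y^{t,T,x,u})$. Since the two equations share the same $f$ and $g_{ij}$, the term $\hat{h}$ vanishes and only the terminal difference survives, so that $\mathbb{\hat{E}}[\sup_{s\in[t,T]}|Y^{t,T,m,x,u}_s-Y^{t,T,x,u}_s|^2]\leq C\,\mathbb{\hat{E}}[\sup_{r\in[t,T]}\mathbb{\hat{E}}_r[|\phi^m(X_T^{t,x,u})-\phi(X_T^{t,x,u})|^2]]$. Bounding the terminal difference by $\frac{C(1+|X_T^{t,x,u}|^6)}{m^2}$ and controlling $\sup_r\mathbb{\hat{E}}_r[\cdot]$ through the $G$-martingale maximal inequality applied to $r\mapsto\mathbb{\hat{E}}_r[1+|X_T^{t,x,u}|^6]$, the right-hand side tends to $0$ as $m\to\infty$, which is (ii). The same computation, using instead the first (pointwise) inequality of Theorem \ref{pro3.5} together with Lemma \ref{myq.1}(ii), produces the sharper bound $\sup_{u}|Y^{t,T,m,x,u}_t-Y^{t,T,x,u}_t|\leq \frac{C(1+|x|^3)}{m}$, uniform in $u$; this is the engine for the last two parts.

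For (iii), the uniform-in-$u$ estimate just obtained passes to the essential infima: since $|A_u-B_u|\leq\varepsilon$ q.s. for every $u$ forces $|\mathrm{ess}\inf_u A_u-\mathrm{ess}\inf_u B_u|\leq\varepsilon$, I obtain $|\bar{V}^m(t,x)-\bar{V}(t,x)|\leq\frac{C(1+|x|^3)}{m}$ q.s. Because each $\bar{V}^m$ is deterministic and continuous by Lemma \ref{my3}, this identifies $\bar{V}(t,x)$ with a deterministic function, realized as the locally uniform limit of the $\bar{V}^m$, hence continuous; its quadratic growth follows from (i) via $|\bar{V}(t,x)|\leq\sup_u|Y^{t,T,x,u}_t|\leq C(1+|x|^2)$. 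Finally, for (iv) I would split $|\bar{V}^m(t_m,x_m)-\bar{V}(t,x)|\leq|\bar{V}^m(t_m,x_m)-\bar{V}(t_m,x_m)|+|\bar{V}(t_m,x_m)-\bar{V}(t,x)|$: the first term is at most $\frac{C(1+|x_m|^3)}{m}\to0$ since $x_m\to x$ stays bounded, and the second tends to $0$ by the continuity of $\bar{V}$ established in (iii).

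The delicate step will be the convergence in (ii) and its uniformity in $u$: since there is no dominating probability measure in the $G$-framework, the quantity $\mathbb{\hat{E}}[\sup_r\mathbb{\hat{E}}_r[\cdots]]$ cannot be treated by classical dominated convergence, and one must invoke the $G$-martingale (Doob-type) maximal inequality together with the all-order moment bounds of Lemma \ref{myq.1} to pass to the limit. Keeping every constant independent of the admissible control $u$ throughout these estimates is precisely what makes them usable for the essential-infimum arguments underlying (iii) and (iv).
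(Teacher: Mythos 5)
Your proof is correct, and for parts (i), (ii) and (iv) it is essentially the paper's own argument: quadratic-growth a priori estimates for (i) (with one citation-level caveat: Theorem \ref{pro3.5} as stated in the paper controls only $Y$, so the $\|Z^{t,T,m,x,u}\|_{M_G^2}$ half of (i) requires the fuller estimates of Proposition 3.5 and Corollary 5.2 of \cite{HJPS}, which is exactly what the paper cites); for (ii) the stability estimate of Theorem \ref{pro3.5} combined with the Doob-type maximal inequality for conditional $G$-expectations --- the paper's reference is Theorem 3.3 of \cite{Song11} --- and the ninth-moment bound of Lemma \ref{myq.1}(ii); and for (iv) the same two-term splitting. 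Part (iii) is where you take a genuinely different route. The paper proves sequential continuity $\bar V(t_m,x_m)\rightarrow\bar V(t,x)$ directly: it establishes a stability estimate (its equation \eqref{my6}) for solutions started at different initial times and positions, rewrites the deterministic function $\bar V(t,x)$ as $\inf_{u\in\mathcal U[0,T]}\mathbb{\hat{E}}[\phi(X_T^{t,x,u})+\int_t^T f\,dr]$, and then estimates the difference of the two infima term by term using Lemma \ref{my4}. You instead obtain continuity of $\bar V$ as a locally uniform limit of the jointly continuous functions $\bar V^m$ of Lemma \ref{my3}, using the rate $|\bar V^m(t,x)-\bar V(t,x)|\leq C(1+|x|^3)/m$. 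This works and is shorter, but its crux is a point you should state explicitly: the constant $C$ must be uniform in $t\in[0,T]$, not merely uniform in $u$ and locally in $x$, since joint continuity of the limit requires uniform convergence on compact subsets of $[0,T]\times\mathbb{R}^n$ rather than uniformity in $x$ for each fixed $t$. This uniformity does hold, because the constants in Theorem \ref{pro3.5} and Lemma \ref{myq.1}(ii) depend only on the horizon $T$ (and on $G$, $L$, $\alpha_1$, $\alpha_2$) and not on the initial time $t$, but it is the load-bearing step of your shortcut and deserves a line of justification. The trade-off between the two approaches: the paper's longer argument produces an explicit modulus of continuity in $(t,x)$ from $G$-BSDE stability alone and does not lean on the joint $(t,x)$-continuity of $\bar V^m$ imported from \cite{HJ1}, whereas your argument dispenses entirely with the representation of $\bar V$ as an infimum of expectations, with the initial-time stability estimate \eqref{my6}, and with any use of Lemma \ref{my4} inside this proof.
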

\begin{proof}
Note that $\phi^m$ and $f(x,0,0,u),g_{ij}(x,0,0,u)$ are functions of quadratic growth in $x$ with uniformly bounded coefficients. Applying Proposition 3.5 and Corollary 5.2 in \cite{HJPS}, we obtain (i).
By Theorem \ref{pro3.5}  and Theorem 3.3 in \cite{Song11}, we can find a constant $\tilde{C}$  depending on $M$, $T$, $G$, $L$, $\alpha_1$ and $\alpha_2$ (may vary from line to line), such that, for any $(t,x,u)\in[0,T]\times\mathbb{R}^n\times\mathcal{U}[0,T]$,
\begin{align}
&\lim\limits_{m\rightarrow\infty}\mathbb{\hat{E}}[\sup\limits_{s\in[t,T]}|Y^{t,T,m,x,u}_s-Y^{t,T,x,u}_s|^2]
\nonumber\\&  \leq \lim\limits_{m\rightarrow\infty}\tilde{C} ((\mathbb{\hat{E}}[|\phi(X^{t,x,u}_T)-\phi^m(X^{t,x,u}_T)|^{3}])^{\frac{2}{3}}+\mathbb{\hat{E}}[|\phi(X^{t,x,u}_T)-\phi^m(X^{t,x,u}_T)|^{3}])\nonumber\\
&\leq \lim\limits_{m\rightarrow\infty}\tilde{C}( \frac{1}{m^2}+\frac{\mathbb{\hat{E}}[|X^{t,x,u}_T|^{9}]+(\mathbb{\hat{E}}[|X^{t,x,u}_T|^{9}])^{\frac{2}{3}}}{m^2})=0.\label{my5}
\end{align}
Moreover, from Lemma  \ref{myq.1} (ii), we obtain that
\begin{align*}
&\lim\limits_{m\rightarrow\infty}\sup_{u\in\mathcal{U}^{t}[t,T]}|Y^{t,T,m,x,u}_t-Y^{t,T,x,u}_t|^2\\
&\leq \lim\limits_{m\rightarrow\infty}\tilde{C}( \frac{1}{m^2}+\sup_{u\in\mathcal{U}^{t}[t,T]}\frac{\mathbb{\hat{E}}[|X^{t,x,u}_T|^{9}]+(\mathbb{\hat{E}}[|X^{t,x,u}_T|^{9}])^{\frac{2}{3}}}{m^2})=0.
\end{align*}
In particular, $\lim\limits_{m\rightarrow \infty}\bar{V}^m(t,x)=\bar{V}(t,x)$ by Lemma \ref{the-dpp-1}.

Now we prove $\lim\limits_{m\rightarrow \infty}\bar{V}(t_m,x_m)=\bar{V}(t,x)$ for each given $(t_m,x_m)\in[0,T]\times\mathbb{R}^n$ with $(t_m,x_m)\rightarrow (t,x)$.
Without loss of generality, we assume $t_m\leq t$ and $g_{ij}=0$. By a similar analysis as in (ii) and Lemma \ref{my4},  we can obtain
\begin{align}\label{my6}
\lim\limits_{m\rightarrow\infty}\sup_{u\in\mathcal{U}[0,T]}&\mathbb{\hat{E}}[\sup\limits_{s\in[t,T]}|Y^{t_m,T,x_m,u}_s-Y^{t,T,x,u}_s|^2
\nonumber\\ & \ \ \ \ \ \ \ \ +\int^T_t|Z^{t_m,T,x_m,u}_s-Z^{t,T,x,u}_s|^2ds]=0. \end{align}

It is easy to check that $\bar{V}(t,x)=\underset{u\in\mathcal{U}[0,T]}{ess\inf}Y^{t,T,x,u}_t$. Since $\bar{V}(t,x)$ is a
deterministic function, taking expectation on both sides of  equation \eqref{my2} yields that
\[
\bar{V}(t,x)=\mathbb{\hat{E}}[\underset{u\in\mathcal{U}[0,T]}{ess\inf}Y^{t,T,x,u}_t]\leq \inf_{u\in\mathcal{U}[0,T]}\mathbb{\hat{E}}[\phi(X^{t,x,u}_T)+\int_{t}^{T}f(X_{r}^{t,x,u}%
,Y_{r}^{t,T,x,u},Z_{r}^{t,T,x,u},u_r)dr].
\]
From Lemma \ref{the-dpp-1}, we derive that
\[
\bar{V}(t,x)=\underset{u\in\mathcal{U}^t[t,T]}{\inf}\mathbb{\hat{E}}[Y^{t,T,x,u}_t]\geq \inf_{u\in\mathcal{U}[0,T]}\mathbb{\hat{E}}[\phi(X^{t,x,u}_T)+\int_{t}^{T}f(X_{r}^{t,x,u}%
,Y_{r}^{t,T,x,u},Z_{r}^{t,T,x,u},u_r)dr],
\]
which implies that
\[
\bar{V}(t,x)=\underset{u\in\mathcal{U}[0,T]}{\inf}\mathbb{\hat{E}}[Y^{t,T,x,u}_t]= \inf_{u\in\mathcal{U}[0,T]}\mathbb{\hat{E}}[\phi(X^{t,x,u}_T)+\int_{t}^{T}f(X_{r}^{t,x,u}%
,Y_{r}^{t,T,x,u},Z_{r}^{t,T,x,u},u_r)dr].
\]
Consequently,
\begin{align*}
&|\bar{V}(t,x)-\bar{V}(t_m,x_m)|\\&
\leq \sup_{u\in\mathcal{U}[0,T]}\mathbb{\hat{E}}[|\phi(X^{t,x,u}_T)-\phi(X^{t_m,x_m,u}_T)|+\int_{{t_m}}^{{t}}|f(X_{r}^{t_m,x_m,u}%
,Y_{r}^{t_m,T,x_m,u},Z_{r}^{t_m,T,x_m,u},u_r)|\,dr\\
&\ \ \  +\int_{{t}}^{T}|f(X_{r}^{t,x,u}%
,Y_{r}^{t,T,x,u},Z_{r}^{t,T,x,u},u_r)-f(X_{r}^{t_m,x_m,u}%
,Y_{r}^{t_m,T,x_m,u},Z_{r}^{t_m,T,x_m,u},u_r)|\,dr]\\
& \leq  \sup_{u\in\mathcal{U}[0,T]}\mathbb{\hat{E}}[(t-t_m)^{\frac{1}{2}}(\int_{{t_m}}^{{t}}3(|f(X_r^{t_m,x_m,u},0,0,u_r)|^2+|LY_{r}^{t_m,T,x_m,u}|^2+|\alpha_2Z_{r}^{t_m,T,x_m,u}|^2)\,dr)^{\frac{1}{2}}\\
&\ \ \ +\int_{{t}}^{T}(L(1+|X_{r}^{t,x,u}|+|X_{r}^{t_m,x_m,u}|)|X_{r}^{t,x,u}-X_{r}^{t_m,x_m,u}|+L
|Y_{r}^{t,T,x,u}-Y_{r}^{t_m,T,x_m,u}| )\,dr\\& \ \ \ +\alpha_2\int_{{t}}^{T}|Z_{r}^{t,T,x,u}-Z_{r}^{t_m,T,x_m,u}|\,dr+
|\phi(X^{t,x,u}_T)-\phi(X^{t_m,x_m,u}_T)|].
\end{align*}
By Lemma \ref{my4}, (i) and equation \eqref{my6}, we derive that
\[
\lim\limits_{m\rightarrow\infty}|\bar{V}(t,x)-\bar{V}(t_m,x_m)|=0,
\]
 and the property (iii) holds.

 From (iii), we get that
\begin{align*}&\lim\limits_{m\rightarrow\infty}|\bar{V}^m(t_m,x_m)-\bar{V}(t,x)|\\ \ \  \ \ \ & \leq \lim\limits_{m\rightarrow\infty}|\bar{V}^m(t_m,x_m)-\bar{V}(t_m,x_m)|+ \lim\limits_{m\rightarrow\infty}|\bar{V}(t_m,x_m)-\bar{V}(t,x)|\\ \ \ \ \ \ &=\lim\limits_{m\rightarrow\infty}|\bar{V}^m(t_m,x_m)-\bar{V}(t_m,x_m)|.
\end{align*}
By Lemma \ref{myq.1} (ii) and equation \eqref{my5}, we obtain
\begin{align*}&
\lim\limits_{m\rightarrow\infty}|\bar{V}^m(t_m,x_m)-\bar{V}(t,x)|\\ \ \ \ \ \  &\leq\lim\limits_{m\rightarrow\infty}\tilde{C}( \frac{1}{m}+\sup_{u\in\mathcal{U}[0,T]}\frac{\mathbb{\hat{E}}[|X^{t_m,x_m,u}_T|^{9}]^{\frac{1}{2}}+\mathbb{\hat{E}}[|X^{t_m,x_m,u}_T|^{9}]^{\frac{1}{3}}}{m})=0.
\end{align*}
 The proof is complete.
\end{proof}

\begin{lemma}\label{myy1}
Assume \emph{(B1)-(B2)} hold. Then for any $t\leq
s\leq T$, $x\in\mathbb{R}^{n}$, we have
\begin{equation*}%
\begin{array}
[c]{rl}%
\bar{V}(t,x)= & \underset{u\in\mathcal{U}[t,s]}{\text{ess}\inf}%
\mathbb{G}_{t,s}^{t,x,u}[\bar{V}(s,X_{s}^{t,x,u})]
=  \underset{u\in\mathcal{U}^{t}[t,s]}{\inf}\mathbb{G}_{t,s}%
^{t,x,u}[\bar{V}(s,X_{s}^{t,x,u})].
\end{array}
\end{equation*}
\end{lemma}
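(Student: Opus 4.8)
The plan is to prove the three-way identity by first establishing the dynamic programming principle for the regularized value functions $\bar V^m$, whose terminal data $\phi^m$ is Lipschitz, and then passing to the limit $m\to\infty$. Since $\phi^m$ is Lipschitz, $\bar V^m$ falls within the scope of \cite{HJ1}: by the ``backward semigroup'' method together with the implied-partition argument there, one obtains for each $m\ge 1$
\[
\bar V^m(t,x)=\operatorname*{ess\,inf}_{u\in\mathcal U[t,s]}\mathbb{G}_{t,s}^{t,x,u}[\bar V^m(s,X_s^{t,x,u})]
=\inf_{u\in\mathcal U^t[t,s]}\mathbb{G}_{t,s}^{t,x,u}[\bar V^m(s,X_s^{t,x,u})].
\]
Here I would stress that the backward semigroup $\mathbb{G}_{t,s}^{t,x,u}[\cdot]$ is built from the \emph{same} data $(b,h,\sigma,f,g)$ for every $m$, so only the terminal argument changes with $m$.

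Then I would pass to the limit. The left-hand side converges to $\bar V(t,x)$ by Lemma \ref{my7} (taking $(t_m,x_m)\equiv(t,x)$ in part (iv)). For the right-hand sides the key point is a convergence that is \emph{uniform in the control}. To this end I would invoke the stability estimate of Theorem \ref{pro3.5}: since the two backward semigroups share the generator and differ only in the terminal value,
\[
|\mathbb{G}_{t,s}^{t,x,u}[\bar V^m(s,X_s^{t,x,u})]-\mathbb{G}_{t,s}^{t,x,u}[\bar V(s,X_s^{t,x,u})]|^2
\le C\,\mathbb{\hat{E}}_{t}[|\bar V^m(s,X_s^{t,x,u})-\bar V(s,X_s^{t,x,u})|^2].
\]
From the proof of Lemma \ref{my7}(ii) one has a polynomial rate $|\bar V^m(s,y)-\bar V(s,y)|\le C m^{-1}(1+|y|^{k})$ for some $k$, and combining this with the uniform moment bound of Lemma \ref{myq.1}(ii) the right-hand side above is dominated by $C(1+|x|^{2k})m^{-2}$, \emph{independently of} $u$. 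Since $|\operatorname*{ess\,inf}_u a(u)-\operatorname*{ess\,inf}_u b(u)|\le \esssup_u|a(u)-b(u)|$, both the essential infimum over $\mathcal U[t,s]$ and the infimum over $\mathcal U^t[t,s]$ of $\mathbb{G}_{t,s}^{t,x,u}[\bar V^m(s,X_s^{t,x,u})]$ converge to the corresponding quantities carrying $\bar V$. Letting $m\to\infty$ in the displayed identity then yields the assertion.

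For orientation I would also indicate the purely probabilistic route that underlies the case $\bar V^m$, since it exhibits where the real difficulty sits. The inequality $\bar V(t,x)\ge\operatorname*{ess\,inf}_{u\in\mathcal U[t,s]}\mathbb{G}_{t,s}^{t,x,u}[\bar V(s,X_s^{t,x,u})]$ is the easy direction: by time-consistency of the backward semigroup (uniqueness of the $G$-BSDE on $[t,T]$) one has $Y_t^{t,T,x,u}=\mathbb{G}_{t,s}^{t,x,u}[Y_s^{s,T,X_s^{t,x,u},u}]$, the random-initial-condition representation (the finite-horizon analogue of Theorem \ref{the-dpp-1-1}) gives $Y_s^{s,T,X_s^{t,x,u},u}\ge \bar V(s,X_s^{t,x,u})$ q.s., and the comparison theorem for $G$-BSDEs together with taking the essential infimum conclude; the middle equality is the reduction of Theorem \ref{the-dpp-11}, valid because $\bar V(s,\cdot)$ is a deterministic continuous function (Lemma \ref{my7}(iii)).

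The hard part is the reverse inequality $\bar V(t,x)\le\inf_{u\in\mathcal U^t[t,s]}\mathbb{G}_{t,s}^{t,x,u}[\bar V(s,X_s^{t,x,u})]$, and it is precisely this step that is packaged into the DPP for $\bar V^m$ via \cite{HJ1}. Carried out directly, it requires the new version of the ``implied partition'' of Lemma \ref{le-new-dpp2}: fixing $u\in\mathcal U^t[t,s]$, one approximates $X_s^{t,x,u}$ by simple functions $\xi^k=\sum_i x_i\mathbf{1}_{A_i}$ with $\mathbf{1}_{A_i}\in L_G^2(\Omega_s)$ (which forces the non-degeneracy perturbation of the forward flow used in Lemma \ref{le-new-dpp2}), selects $\varepsilon$-optimal deterministic controls on $[s,T]$ over each $A_i$, glues them into an admissible control on $[t,T]$, and passes to the limit using Theorem \ref{le-dpp-3}(ii) and Theorem \ref{pro3.5}. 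Thus the principal obstacle, namely the measurable approximation underlying the ``$\le$'' direction, is resolved by reducing to the Lipschitz-terminal case; the only genuinely new estimate in the present proof is the uniform-in-$u$ passage to the limit $m\to\infty$, which hinges on the quadratic-growth control of $\bar V^m-\bar V$ together with the uniform moment bounds on $X^{t,x,u}$.
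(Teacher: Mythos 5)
Your proposal is correct and takes essentially the same route as the paper: approximate the terminal data by the Lipschitz functions $\phi^m$, invoke the DPP of \cite{HJ1} for $\bar V^m$, and pass to the limit $m\rightarrow\infty$ uniformly in the control by combining the stability estimate of Theorem \ref{pro3.5} with the moment bound of Lemma \ref{myq.1} (ii). The only cosmetic difference is in how the uniform-in-$u$ convergence is organized: you extract an explicit polynomial rate $|\bar V^m(s,y)-\bar V(s,y)|\leq Cm^{-1}(1+|y|^{k})$ from the proof of Lemma \ref{my7}, whereas the paper uses the locally uniform convergence $\sup_{|x|\leq l}|\bar V^m(t,x)-\bar V(t,x)|\rightarrow 0$ together with a truncation at level $l$ (controlling the tail by $\mathbb{\hat{E}}[1+|X_{s}^{t,x,u}|^{6}]^{1/2}/l$) and then sends $l\rightarrow\infty$; both devices rest on exactly the same estimates.
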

\begin{proof}
From Lemma \ref{the-dpp-1}, it suffices to show that
\[
\bar{V}(t,x)= \underset{u\in\mathcal{U}^{t}[t,s]}{\inf}\mathbb{G}_{t,s}%
^{t,x,u}[\bar{V}(s,X_{s}^{t,x,u})].
\]

By Theorem 21 in \cite{HJ1}, we have
\begin{equation*}%
\begin{array}
[c]{rl}%
\bar{V}^m(t,x)= \underset{u\in\mathcal{U}^{t}[t,s]}{\inf}\mathbb{G}_{t,s}%
^{t,x,u}[\bar{V}^m(s,X_{s}^{t,x,u})].
\end{array}
\end{equation*}
From the proof of Lemma \ref{my7}, we conclude that for each $l>0$,
\[
\lim\limits_{m\rightarrow\infty}\sup\limits_{|x|\leq l}|\bar{V}^m(t,x)-\bar{V}(t,x)|=0.
\]
Thus applying Theorem \ref{pro3.5}  and Lemma \ref{myq.1} (ii), we can find some constant $\tilde{C}$ independent of $m$ and $l$ (may vary from line to line)  so that
\begin{align*}
&\lim\limits_{m\rightarrow\infty}\underset{u\in\mathcal{U}^{t}[t,s]}{\sup}|\mathbb{G}_{t,s}%
^{t,x,u}[\bar{V}^m(s,X_{s}^{t,x,u})]-\mathbb{G}_{t,s}%
^{t,x,u}[\bar{V}(s,X_{s}^{t,x,u})]|
\\ &\leq \lim\limits_{m\rightarrow\infty}\tilde{C}\underset{u\in\mathcal{U}^{t}[t,s]}{\sup}\mathbb{\hat{E}}[|\bar{V}^m(s,X_{s}^{t,x,u})-\bar{V}(s,X_{s}^{t,x,u})|^2]^{\frac{1}{2}}
\\ &\leq \tilde{C}\lim\limits_{m\rightarrow\infty}[\sup\limits_{|x|\leq l}|\bar{V}^m(t,x)-\bar{V}(t,x)|+\underset{u\in\mathcal{U}^{t}[t,s]}{\sup}\frac{\mathbb{\hat{E}}[1+|X_{s}^{t,x,u}|^6]^{\frac{1}{2}}}{l}]
\\ &\leq \frac{\tilde{C}}{l}.
\end{align*}
Sending $l\rightarrow\infty$ yields that \[
\lim\limits_{m\rightarrow\infty}\underset{u\in\mathcal{U}^{t}[t,s]}{\inf}\mathbb{G}_{t,s}%
^{t,x,u}[\bar{V}^m(s,X_{s}^{t,x,u})]=\underset{u\in\mathcal{U}^{t}[t,s]}{\inf}\mathbb{G}_{t,s}%
^{t,x,u}[\bar{V}(s,X_{s}^{t,x,u})],
\]
which completes the proof.
\end{proof}

Now we are in a position to give the proof of Theorem \ref{my8}.

\begin{proof}[Proof of Theorem \ref{my8}]
By Lemmas \ref{my3}, \ref{my7} and Proposition 4.3 in \cite{CMI}, it is easy to verify that
$\bar{V}$ is a viscosity solution of the  fully nonlinear PDE
\eqref{myfeynman} with terminal condition $\bar{V}(T,x)=\phi(x)$.
The uniqueness can be found in Theorem 6.1 of \cite{BJ} and
the proof is complete.
\end{proof}

\end{document}